\newcommand{\plim}{\varprojlim}
\newcommand{\mcal}{\mathcal}
\newcommand{\mbf}{\mathbf}
\newcommand{\mfrak}{\mathfrak}
\newcommand{\mbb}{\mathbb}
\newcommand{\mrm}{\mathrm}
\newcommand{\vphi}{\varphi}
\newcommand{\aet}{\mathrm{\acute{e}t}}
\newcommand{\cO}{\mathcal{O}}
\newtheorem{theorem}{Theorem}[section]
\newtheorem{corollary}[theorem]{Corollary}
\newtheorem{lemma}[theorem]{Lemma}
\newtheorem{proposition}[theorem]{Proposition}
\theoremstyle{definition}
\newtheorem{definition}[theorem]{Definition}
\newtheorem{remark}[theorem]{Remark}
\newtheorem{example}[theorem]{Example}
\newtheorem*{acknowledgments}{Acknowledgments}
\title{Some Kummer extensions over maximal cyclotomic fields,  
a finiteness theorem of Ribet and TKND-AVKF fields}
\author{Takahiro Murotani\footnote{
Faculty of Arts and Sciences, Kyoto Institute of Technology, 
Matsugasaki, Sakyo-ku, Kyoto 606-8585, JAPAN
\endgraf
e-mail: {\tt murotani@kit.ac.jp}}
 and 
Yoshiyasu Ozeki\footnote{
Faculty of Science, Kanagawa University,
3-27-1 Rokkakubashi, Kanagawa-ku, Yokohama-shi, Kanagawa 221-8686, JAPAN
\endgraf
e-mail: {\tt ozeki@kanagawa-u.ac.jp}} 
}
\begin{document}
\maketitle

\begin{abstract}
It is a theorem of Ribet that an abelian variety defined over 
a number field $K$ has only finitely many torsion points
with values in the maximal cyclotomic extension field $K^{\mathrm{cyc}}$ of $K$.
Recently, R\"ossler and Szamuely generalized Ribet's theorem 
in terms of  the \'etale cohomology with $\mathbb{Q}/\mathbb{Z}$-coefficients of a smooth proper variety.
In this paper, we show that the same finiteness holds even after replacing $K^{\mathrm{cyc}}$
with the field obtained by adjoining to $K$ all roots of all elements of 
a certain subset of $K$.
Furthermore, we give some new examples of TKND-AVKF fields;
the notion of TKND-AVKF is introduced  by Hoshi, Mochizuki and Tsujimura,
and  TKND-AVKF fields are expected as one of suitable base fields 
for anabelian geometry.
\end{abstract}

      \tableofcontents


\section{Introduction}

Let $K$ be a number field (= a finite extension field of the field of rational numbers $\mbb{Q}$).
The Mordell-Weil theorem asserts that the group $A(K)$ of $K$-rational points 
of an abelian variety $A$ over $K$ is finitely generated.
In particular, the torsion subgroup of $A(K)$ is finite.
We consider the  finiteness of  the $L$-rational torsion subgroup 
$A(L)_{\mrm{tor}}$ of $A$
for an algebraic extension $L$ of $K$ {\it of infinite degree}.
Motivated by Mazur's celebrated paper \cite{Ma}, 
Imai \cite{Im} and Serre \cite{Se} proved independently that, 
for any prime $p$, 
the group $A(L)_{\mrm{tor}}$ is finite
if $L$ is the cyclotomic $\mbb{Z}_p$-extension field of $K$.  
Moreover, Ribet showed in the appendix of \cite{KL} that 
the same finiteness holds also for  the case where $L$ is 
the maximal cyclotomic extension field $K^{\mrm{cyc}}$ of $K$.
If $L$ is the maximal abelian extension field $K^{\mrm{ab}}$ of $K$,
then it is a result of Zarhin \cite{Za} that $A(L)_{\mrm{tor}}$ is finite if and only if 
any non-zero $K$-simple subvariety of $A$ is not of CM-type over $K$.
(Here, we say that a $K$-simple abelian variety $X$ is {\it of CM-type over $K$}
if $\mrm{End}_K(X)\otimes_{\mbb{Z}} \mbb{Q}$ is a
number field of degree $2\dim X$.)
On the other hand, as an essentially non-abelian extension field case result,
Bogomolov showed  in \cite{Bo} that $A(L)_{\mrm{tor}}$ is finite if 
the intersection of $L$ and $K^{\mrm{ab}}$ has finite degree over $K$.
It is also an interesting observation by Zhang \cite{Zh} that 
 $A(L)_{\mrm{tor}}$ is finite if $L$ is the composite of 
$K$ and the maximal totally real subfield of $\overline{\mbb{Q}}$.

Let us explain our results. 
We denote by $\mfrak{Primes}$  the set of prime numbers. 
We denote by $G_F$ the absolute Galois group of a field $F\, (\subset \overline{\mbb{Q}})$.
For a subset $S$ of $F$, we denote by $S^{1/\infty}\subset \overline{\mbb{Q}}$
the set of all roots of all elements of $S$
and, for any prime $p$, denote by $S^{1/p^{\infty}}\subset \overline{\mbb{Q}}$ 
the set of all $p$-power roots of all elements of $S$.
For any algebraic variety $X$ over a number field $K$,
we denote by $h_i(X)$ the $i$-th Betti number of the topological space $X(\mbb{C})$
(which is independent of the choice of 
an embedding $\overline{K}\hookrightarrow \mbb{C}$),
and also set $X_{\overline{K}}:=X\otimes_K \overline{K}$.
Recently, R\"ossler and Szamuely \cite{RS} generalized Ribet's theorem above 
in terms of  the \'etale cohomology with $\mbb{Q}/\mbb{Z}$-coefficients of a smooth proper variety.
Our first main theorem below is motivated by their works. 

\begin{theorem}[= A part of Theorem \ref{MT:tor2}]
\label{MT:tor}
Let $K$ be a number field
and $p_0$ the maximal prime ramified in 
the maximal abelian subextension $K_0$ in $K/\mbb{Q}$
{\rm (}we set $p_0:=1$ if $K_0=\mbb{Q}${\rm )}.
Let $h>0$ be an integer and 
$\Delta$ a finitely generated subgroup  of $K^{\times}$, 
and set 
$$
M:=K(\Delta^{1/p^{\infty}}, K^{1/{q}^{\infty}}
\mid p,q\in \mfrak{Primes}, p\le \mrm{max}\{h+1, p_0\}<q).
$$
Let $i$ be an odd integer, $j$ an integer and 
$X$ a smooth proper geometrically connected algebraic variety 
over $K$ with $h_i(X)\le h$. 
Then, the group $H^i_{\aet}(X_{\overline{K}},\mbb{Q}/\mbb{Z}(j))^{G_M}$ is finite.
\end{theorem}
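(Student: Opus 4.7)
The plan is to proceed prime by prime. One first decomposes
$$H^i_{\aet}(X_{\overline{K}},\mbb{Q}/\mbb{Z}(j)) \cong \bigoplus_\ell H^i_{\aet}(X_{\overline{K}},\mbb{Q}_\ell/\mbb{Z}_\ell(j)),$$
so it suffices to prove each $\ell$-primary summand is finite and that only finitely many are non-zero. Setting $V_\ell := H^i_{\aet}(X_{\overline{K}},\mbb{Q}_\ell(j))$ and choosing a $G_K$-stable $\mbb{Z}_\ell$-lattice $T_\ell \subset V_\ell$, the $\ell$-primary summand equals $V_\ell/T_\ell$ up to a finite contribution from torsion in $H^i_{\aet}(X_{\overline{K}},\mbb{Z}_\ell(j))$. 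By the structure of subgroups of $(\mbb{Q}_\ell/\mbb{Z}_\ell)^d$, the finiteness of $(V_\ell/T_\ell)^{G_M}$ is equivalent to the triviality of its maximal divisible subgroup, hence to the vanishing $V_\ell^{G_M} = 0$. Observe also that $1 \in \Delta$ forces $\mu_{p^\infty} \subset \Delta^{1/p^\infty}$ for every prime $p$, so $M \supset K^{\mrm{cyc}}$; however $G_M \subsetneq G_{K^{\mrm{cyc}}}$ in general, so the R\"ossler--Szamuely theorem does not directly apply, and one must further control the abelian ``Kummer part'' of $\mrm{Gal}(M/K^{\mrm{cyc}})$.

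For primes $\ell > P := \mrm{max}\{h+1, p_0\}$, the field $M$ contains the huge Kummer extension $K(K^{1/\ell^\infty})$, whose Galois group over $K(\mu_{\ell^\infty})$ is an enormous abelian pro-$\ell$ group dual (via Kummer theory) to $K^\times \otimes \mbb{Z}_\ell$. The plan is to combine the numerical bound $\ell > h+1 \ge \dim V_\ell + 1$, which activates refined integral $p$-adic Hodge theoretic constraints at primes of $K$ above $\ell$ (such as Fontaine--Laffaille-type inputs), with Deligne's weight-purity of $V_\ell$ (pure of weight $i - 2j \ne 0$, since $i$ is odd). These two ingredients should force the abelian pro-$\ell$ Kummer action on $V_\ell$ to have trivial fixed space. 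The role of $p_0$ is presumably to ensure the cyclotomic character has open image in $\mbb{Z}_\ell^\times$ when $\ell > p_0$, so that no pathology from abelian descent in $K/\mbb{Q}$ obstructs the argument.

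For the finitely many primes $\ell \le P$, $M$ contains only $K(\Delta^{1/\ell^\infty})$, whose Galois group over $K(\mu_{\ell^\infty})$ is a free $\mbb{Z}_\ell$-module of rank at most $\mrm{rank}(\Delta)$. Since only finitely many such primes need to be addressed, each can be treated individually: one uses the same global weight constraint (from $i$ odd) together with a direct analysis of the finite-rank Kummer extension's action at places above $\ell$ to deduce $V_\ell^{G_M} = 0$. Vanishing of $(V_\ell/T_\ell)^{G_M}$ for almost all $\ell$ (needed so the direct sum is finite) should follow from a bigness statement on the residual mod-$\ell$ representation for $\ell \gg 0$.

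The main obstacle is the large-prime case. Going beyond $K^{\mrm{cyc}}$ to the vastly larger Kummer field $K(K^{1/\ell^\infty})$ requires a genuinely new ingredient beyond the Ribet and R\"ossler--Szamuely arguments: one must show that the enormous abelian pro-$\ell$ extension encoding the multiplicative structure of all of $K^\times$ does not give rise to any new Galois-fixed vectors. The precise interplay between local $p$-adic Hodge theory at $\ell$-adic places (activated by $\ell > h+1$), the global purity of $V_\ell$, and the Kummer-theoretic description of the extra abelian quotient of $G_{K^{\mrm{cyc}}}$ is the technical heart of the proof. The choice $P = \mrm{max}\{h+1, p_0\}$ is tailored precisely so that both the $p$-adic Hodge-theoretic input and the cyclotomic-character input are simultaneously available for primes above $P$.
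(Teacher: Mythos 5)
Your opening reduction is the same as the paper's: via \cite[Proposition 2.1]{RS} one reduces to (A) $V_\ell^{G_M}=0$ for all $\ell$ and (B) vanishing of the mod-$\ell$ fixed spaces for almost all $\ell$. But the technical heart of your proposal --- showing that the Kummer part of $\mrm{Gal}(M/K^{\mrm{cyc}})$ produces no new fixed vectors --- is not an argument; it is a wish. You write that integral $p$-adic Hodge theory at places above $\ell$ (Fontaine--Laffaille) combined with weight purity ``should force'' the abelian pro-$\ell$ Kummer action to have trivial fixed space, but you give no mechanism, and this is not how the proof goes. The paper's actual argument is elementary and goes in the opposite direction: one does \emph{not} show the Kummer group acts without fixed vectors; one shows that any fixed vector over $M$ would already be fixed over a \emph{finite} extension of $K^{\mrm{cyc}}$, and then invokes R\"ossler--Szamuely there. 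Concretely: (i) for a $p$-adic representation, the prime-to-$p$ part of $\mrm{Gal}(M/L_p)$ acts through a finite quotient because the kernel of $GL_t(\cO)\to GL_t(\cO/p')$ is pro-$p$ (Proposition \ref{Thm.A}); (ii) for the pro-$p$ Kummer part $H=\mrm{Gal}(L_p/L)$, conjugation by $\sigma_0$ with $c=\chi_p(\sigma_0)>1$ gives $\sigma_0\tau\sigma_0^{-1}=\tau^c$ (Proposition \ref{pro-ell}(3)), so the eigenvalues of any $\tau$ on a $\le h$-dimensional space are permuted by the $c$-th power map and hence are roots of unity of order dividing $\mrm{lcm}\{c^r-1\}$ (Proposition \ref{KT}); this forces a finite-index subgroup of $H$ to act unipotently, and the trace form of Brauer--Nesbitt (Corollary \ref{BN:unip}) then yields a nonzero fixed vector over a finite extension of $K^{\mrm{cyc}}$ --- the desired contradiction. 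No local $p$-adic Hodge theory enters anywhere in the new part of the argument.

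You have also misidentified the role of the threshold $\max\{h+1,p_0\}$. It has nothing to do with ``activating'' Fontaine--Laffaille constraints. For the mod-$\ell$ statement (B), where $\mrm{Gal}(L(K^{1/q^\infty})/L)$ is a huge (non--finitely generated) pro-$q$ abelian group, one cannot afford to pass to a finite-index subgroup; one needs the \emph{entire} group to act unipotently. This holds precisely when $c^r-1$ is prime to $q$ for all $1\le r\le h$, which is guaranteed when the multiplicative order of $c\bmod q$ is at least $(q-1)/\mu\ge h+1$ --- whence the requirement $q>\max\{h+1,p_0\}$ (with $p_0$ ensuring $\bar\chi_q(G_K)=\mbb{F}_q^\times$, via Remark \ref{Thm.B:Rem}). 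For the finitely many small primes $p$, only the finitely generated group $\Delta$ is adjoined, so $\mrm{Gal}(L(\Delta^{1/p^\infty})/L)\simeq\mbb{Z}_p^{\delta}$ has only finitely many open subgroups of bounded index and the descent to a finite extension can be controlled (STEP 2 of Proposition \ref{Thm.B}). The bound $\ell>h\ge\dim W_\ell$ is also what makes the characteristic-$\ell$ Brauer--Nesbitt argument legitimate. As it stands, your proposal is missing the key idea and would not close.
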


Note that the twist $j$ above
does not really play a role in the statement since $G_M$ 
fixes all roots of unity.
Now we consider the case where $X=A$ is an abelian variety.
As is explained in the Introduction of \cite{RS}, 
the torsion subgroup $A(M)_{\mrm{tor}}$  of $A(M)$ is isomorphic to 
$H^1_{\aet}(A^{\vee}_{\overline{K}},\mbb{Q}/\mbb{Z}(1))^{G_{M}}$
where $A^{\vee}$ is the dual abelian variety of $A$.
Therefore, we have the following.
\begin{corollary}
\label{AVfinite}
Let the notations $K$, $p_0$ and $\Delta$ 
be as in Theorem \ref{MT:tor} and $g>0$ an integer. 
Set 
$$
M':=K(\Delta^{1/p^{\infty}}, K^{1/{q}^{\infty}}
\mid p,q\in \mfrak{Primes}, p\le \mrm{max}\{2g+1, p_0\}<q).
$$
Then, for any abelian variety $A$ over $K$ of dimension $\le g$, 
 the torsion subgroup $A(M')_{\mrm{tor}}$ 
of $A(M')$ is finite.
\end{corollary}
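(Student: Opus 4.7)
The plan is to deduce Corollary \ref{AVfinite} directly from Theorem \ref{MT:tor} by taking $X$ to be the dual abelian variety and choosing the cohomological parameters appropriately.

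First I would invoke the identification already recalled in the paragraph before the corollary: for any algebraic extension $L/K$ and any abelian variety $A/K$, there is a natural isomorphism
\[
A(L)_{\mrm{tor}} \;\cong\; H^1_{\aet}(A^{\vee}_{\overline{K}},\mbb{Q}/\mbb{Z}(1))^{G_{L}}.
\]
Applying this to $L = M'$ reduces the problem to showing that the $G_{M'}$-invariants of $H^1_{\aet}(A^{\vee}_{\overline{K}},\mbb{Q}/\mbb{Z}(1))$ form a finite group.

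Next, I would observe that $A^{\vee}$ is a smooth proper geometrically connected variety over $K$ with $\dim A^{\vee} = \dim A \le g$, so its first Betti number is
\[
h_1(A^{\vee}) \;=\; 2\dim A^{\vee} \;\le\; 2g,
\]
using the standard fact that an abelian variety of dimension $d$ over $\mbb{C}$ is topologically a $2d$-dimensional torus. Setting $h := 2g$, the odd integer $i := 1$, and $j := 1$, the hypotheses of Theorem \ref{MT:tor} are satisfied with $X = A^{\vee}$. Moreover, with this choice of $h$, we have $\mrm{max}\{h+1,p_0\} = \mrm{max}\{2g+1,p_0\}$, so the field $M$ appearing in Theorem \ref{MT:tor} coincides exactly with the field $M'$ of the corollary. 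Hence Theorem \ref{MT:tor} yields the finiteness of $H^1_{\aet}(A^{\vee}_{\overline{K}},\mbb{Q}/\mbb{Z}(1))^{G_{M'}}$, and the previous isomorphism concludes the proof.

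Since all the substantial input is concentrated in Theorem \ref{MT:tor}, there is no real obstacle here; the corollary is a formal consequence, and the only bookkeeping is to match the parameter $h+1$ in the definition of $M$ with the value $2g+1$ in the definition of $M'$, which works because $h_1$ of an abelian variety equals twice its dimension.
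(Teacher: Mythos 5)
Your proposal is correct and matches the paper's own argument: the authors likewise invoke the isomorphism $A(M')_{\mrm{tor}}\cong H^1_{\aet}(A^{\vee}_{\overline{K}},\mbb{Q}/\mbb{Z}(1))^{G_{M'}}$ from the introduction of R\"ossler--Szamuely and apply Theorem \ref{MT:tor} to $X=A^{\vee}$ with $h=2g$, so that $\mrm{max}\{h+1,p_0\}=\mrm{max}\{2g+1,p_0\}$ and $M=M'$. Your bookkeeping of $h_1(A^{\vee})=2\dim A^{\vee}\le 2g$ is exactly the point, and nothing further is needed.
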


The fields $M$ and $M'$ above contain $K^{\mrm{cyc}}$ (moreover, they contain $K(\Delta^{1/\infty})$).
Hence our results are refinements of  Ribet \cite{KL} 
and R\"ossler-Szamuely \cite{RS}.
However, it should be remarked that our proofs rely on their results; 
we reduce proofs to the case 
of maximal cyclotomic extension fields.
\if0
For abelian varieties without complex multiplication, our arguments
and the main theorem of Zarhin \cite{Za} 
assure some refined version of Corollary \ref{AVfinite}; see Corollary \ref{MT:Zarhin}.
\fi

Here is an immediate consequence of our results: 
For a number field $K$  with integer ring $\cO_K$,
the torsion subgroup $A(K((\cO_K^{\times})^{1/\infty}))_{\mrm{tor}}$ 
of $A(K((\cO_K^{\times})^{1/\infty}))$ is finite for any abelian variety over $K$.
(In fact, $\Delta:=\cO_K^{\times}$ is finitely generated by Dirichlet's unit theorem,
and $K((\cO_K^{\times})^{1/\infty})$ is contained in $M'$ appeared in Corollary \ref{AVfinite} for every $g$.)
So it seems quite natural to ask the following question:
``Is the torsion subgroup $A(K(K^{1/\infty}))_{\mrm{tor}}$ 
of $A(K(K^{1/\infty}))$ finite 
for any abelian variety $A$ over $K$?"
Now we do not have an answer to this question,
but instead, we will show that the field $K(K^{1/\infty})$ 
satisfies an important property {\it ``TKND-AVKF"}.

The methods of the proof of our theorems give some contribution to anabelian geometry. 
Hoshi, Mochizuki and Tsujimura showed in \cite{HMT} 
(the semi-absolute version of) the anabelian Grothendieck conjecture
 for higher dimensional ($\geq 2$) configuration
spaces associated to hyperbolic curves of genus $0$ 
over {\it TKND-AVKF} fields contained in $\overline{\mathbb{Q}}$.
(The notion of TKND-AVKF is defined by some smallness or vanishingness  
for subgroups consisting of divisible elements of  certain 
multiplicative groups  or Mordell-Weil groups; see Definition \ref{anabelianDEF}.)
Moreover, in \cite{Tsu3}, Tsujimura also showed the (relative) birational version 
of the anabelian Grothendieck conjecture for smooth curves 
over TKND-AVKF fields of characteristic $0$ with a certain mild condition.
So TKND-AVKF fields should be considered as one of suitable base fields
for anabelian geometry.
However, the definition of a TKND-AVKF field
seems to be a little bit ``strange" (at least for the authors) 
and thus it may be not so easy 
to understand their characters.  
From such a viewpoint, 
it would be important
to study examples or properties of TKND-AVKF fields.
As typical examples, finitely generated fields over prime fields are TKND-AVKF;
these fields are in the original situation that Grothendieck considered.
In addition, it is an interesting observation of Tsujimura that 
every subfield of the maximal cyclotomic extension field $K^{\mrm{cyc}}$ 
of a number field $K$ is also TKND-AVKF
(cf.\ \cite[the proof of Theorem 3.1 and Remark 3.4.1]{Tsu1}). 
Tsujimura furthermore showed in \cite{Tsu2} that, if $K$ is a number field and $p$ is a prime, 
then the field $K(p^{1/\infty})$ is a TKND-AVKF field.
Here, we give new examples of TKND-AVKF fields,
and some of them give  refinements of above results. 
Recall that a finite field extension is {\it solvable} 
if the Galois group of its Galois closure is solvable.

\begin{theorem}
\label{MT:AVKF}
Let $K$ be any one of the followings:
\begin{itemize}
\item[{\rm (a)}] $K$ is the composite field of all solvable extensions of degree $\le d$ 
over a given number field. Here, $d$ is a positive integer.
\item[{\rm (b)}] $K$ is the composite field of all finite extensions of degree $\le 4$ 
over a given number field. 
\item[{\rm (c)}] $K$ is a finite extension of 
$\mbb{Q}(\mu_{p^{n_p}}\mid p\in \mfrak{Primes})$
where $(n_p)_{p\in \mfrak{Primes}}$ is a family of positive integers. 
\end{itemize}
Then, any subfield of  $K(K^{1/\infty})$ is TKND-AVKF.
In particular,  $K^{\mrm{cyc}}$ is TKND-AVKF.
\end{theorem}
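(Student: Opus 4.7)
The plan is to verify Definition \ref{anabelianDEF} for every subfield $F$ of $K(K^{1/\infty})$, by unwinding it into a statement about the vanishing (or sufficient smallness) of certain divisible subgroups of $L^\times$ and of $A(L)$, for every finite extension $L/F$ and every abelian variety $A/L$; both conditions are then reduced to an application of Corollary \ref{AVfinite} (equivalently, Theorem \ref{MT:tor}). A finite extension $L/F$ is generated by finitely many algebraic elements, so $L$ embeds into a field of the ``same shape'' built over a slightly enlarged base of type (a)/(b)/(c); this reduces the problem to the case $F = K(K^{1/\infty})$ and $L = K'(K'^{1/\infty})$ for a finite extension $K'/K$ belonging to the same class.

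For the AVKF part, a non-trivial divisible element $x\in A(L)$ produces a sequence $y_n\in A(L)$ with $ny_n=x$. The plan is to descend $A$ to a number field $K_1\subset L$ and to place the field $K_1(x,\{y_n\}_{n\ge 1})$ inside the field $M'$ of Corollary \ref{AVfinite} associated to $K_1$, a finitely generated subgroup $\Delta\subset K_1^{\times}$ capturing the Kummer data of the $y_n$, and the bound $g=\dim A$. The structural hypotheses of (a)--(c) are used here to control the small-prime Kummer data: the solvable / degree-$\le 4$ compositum structure in (a) and (b) confines the small-prime Kummer generators of $L$ over $K_1$ to a finitely generated subgroup of $K_1^{\times}$, while in (c) the cyclotomic content $\mu_{p^{n_p}}\subset K$ enables a Kummer-theoretic descent at each small prime $p$ (via Hilbert 90) onto finitely generated data. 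Once the inclusion is established, Corollary \ref{AVfinite} yields finiteness of $A(L')_{\mrm{tor}}$ for the relevant subfield $L'$ containing $x$ and the $y_n$, and then the sequence $(y_n)$ combined with a N\'eron--Tate height argument ($\hat{h}(y_n)=\hat{h}(x)/n^2\to 0$, contradicting Northcott finiteness on $L'$) forces $x=0$.

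The TKND part is handled by the same strategy, replacing the abelian variety $A$ by $\mathbb{G}_m$ and using the $i=j=1$ case of Theorem \ref{MT:tor} to control the divisible part of $L^{\times}$. The hard part will be the inclusion step: $K(K^{1/\infty})$ a priori contains roots of arbitrary elements of $K^{\times}$ at every prime, whereas $M'$ only permits small-prime roots of the finitely generated $\Delta$. Showing case by case that the Kummer data of an actual divisible element (in $A(L)$ or in $L^{\times}$) is captured by a finitely generated $\Delta$ is the technical heart of the argument, and the place where the specific structural hypotheses of cases (a)--(c) on $K$ are essential.
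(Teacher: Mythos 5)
There are genuine gaps here, and the overall route cannot be repaired as described. The central obstruction is your inclusion step, which you correctly identify as the heart of the matter but whose proposed resolution is false: the field $M'$ of Corollary \ref{AVfinite} only adjoins small-prime roots of a \emph{finitely generated} subgroup $\Delta$ of a \emph{number field}, whereas $K(K^{1/\infty})$ contains, for every prime $p$, the $p$-power roots of \emph{all} of $K^{\times}$, and in each of the cases (a)--(c) the $p$-Kummer data of $K$ over any number field is infinitely generated (already for $d=2$ in case (a), $K\supset\mbb{Q}(\sqrt{m}\mid m\in\mbb{Z})$). No choice of $\Delta$ captures this, and the coordinates of the points $y_n$ witnessing divisibility are not confined to a finitely generated piece. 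The paper's proof avoids this trap entirely: it never routes through Corollary \ref{AVfinite}. Instead it observes (Corollary \ref{AVKF-OT2}) that for a Galois extension of an AVKF field, AVKF is equivalent to $p^{\infty}$-AV-tor-finiteness for each prime $p$ \emph{separately}, and the single-prime vanishing statement (Proposition \ref{Thm.A}) tolerates adjoining roots of \emph{all} of $K$ with no finite-generation hypothesis; the finitely generated $\Delta$ is only needed in the all-primes-at-once statement (Proposition \ref{Thm.B}), which is not used for AVKF. This one-prime-at-a-time result is then iterated along a tower (Corollary \ref{sequence}): for (a) one builds $k=K'_0\subset K'_1\subset\cdots\subset K'_{d'}=K'\supset K$ with each $K'_i\subset K'_{i-1}(K'^{1/\infty}_{i-1})$ by Kummer theory after adjoining enough roots of unity, with Ribet's theorem as the base case; (b) reduces to (a) since every group of order $<60$ is solvable; (c) is a two-step tower.

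Two further steps of your plan would also fail as written. First, the passage from finiteness of $A(L')_{\mrm{tor}}$ to vanishing of the divisible part of $A(L)$ via $\hat h(y_n)=\hat h(x)/n^2\to 0$ and ``Northcott finiteness'' does not work: Northcott requires bounded degree, and $L'$ is an infinite extension of $\mbb{Q}$ over which there is no Bogomolov-type lower bound available; a nonzero divisible element need not be torsion, and the correct reduction is the Galois-cohomological Lemma \ref{AVKF-OT} (which is why the paper insists on Galois towers over an AVKF base). Second, the TKND part cannot be extracted from Theorem \ref{MT:tor} applied to $\mbb{G}_m$: that theorem concerns smooth \emph{proper} varieties, and in any case the divisible subgroup of $L^{\times}$ is genuinely nontrivial (it contains $\mu_{\infty}$), so the relevant statement is not vanishing but containment $L_{\mrm{div}}\subset L\neq\overline{\mbb{Q}}$; the paper proves this via Proposition \ref{TKND} together with Tsujimura's stable $p$-$\times\mu$-indivisibility lemmas, a mechanism absent from your sketch.
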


We also give a criterion of 
a TKND-AVKF property for 
the maximal abelian extension field  of a number field.
Here, we say that a  number field is a {\it CM field} if
it is a totally imaginary quadratic extension of 
a totally real number field. 
\begin{theorem}[= Corollary of Theorem \ref{Kab:AVKF}]
\label{Kab:TKND-AVKF}
Let $K$ be a number field 
and $K^{\mrm{ab}}$ the maximal abelian extension  field of $K$.
Then $K^{\mrm{ab}}$ is TKND-AVKF if and only if 
$K$ does not contain a CM field.
\end{theorem}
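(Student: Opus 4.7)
The strategy is to exploit the fact that this statement is positioned as a corollary of Theorem~\ref{Kab:AVKF}, so I would split the ``TKND'' and ``AVKF'' halves of the definition and treat each separately. I expect that $K^{\mrm{ab}}$ is \emph{automatically} TKND for every number field $K$, so the TKND-AVKF condition collapses to the AVKF condition, which by Theorem~\ref{Kab:AVKF} should be controlled by the existence or non-existence of a CM subfield of $K$.

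For the TKND verification, I would first note that every finite extension $L$ of $K^{\mrm{ab}}$ sits inside $(K')^{\mrm{ab}}$ for some finite extension $K'/K$, and then rule out divisibility in $L^{\times}$ modulo torsion using non-archimedean valuations: a non-torsion element $\alpha \in L^{\times}$ has nonzero valuation at some finite place $v$, and the structure of the value group $v(L^{\times})$ bounds the divisibility of $\alpha$ modulo $\mu(L)$. This is the standard global-class-field-theoretic argument showing that abelian extensions of number fields are Kummer-faithful at the toric level.

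For the AVKF half I would invoke Theorem~\ref{Kab:AVKF}, whose content presumably says that $K^{\mrm{ab}}$ is AVKF iff no $K$-simple abelian variety over $K$ is of CM-type over $K$. Via Zarhin's theorem cited in the introduction, that condition is equivalent to the finiteness of $A(K^{\mrm{ab}})_{\mrm{tor}}$ for every $A/K$. The further equivalence with ``$K$ contains no CM field'' comes from the structure theory of CM abelian varieties: given a $K$-simple $A$ of CM-type over $K$, the CM field $E = \mrm{End}_K(A) \otimes \mbb{Q}$ acts $K$-rationally on $\mrm{Lie}(A)$, producing a CM-type whose reflex field (itself a CM field) must embed into $K$; conversely, a CM field $F \subset K$ yields, via a Shimura--Taniyama-type construction, an abelian variety over $K$ with $K$-rational CM by (a suitable subfield of) $F$.

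The main obstacle I anticipate lies not in this corollary per se but in the reflex-field bookkeeping inside Theorem~\ref{Kab:AVKF}---in particular, showing both directions of the translation between ``$K$ contains a CM field'' and ``there is a $K$-simple CM-type abelian variety over $K$ with \emph{all} its endomorphisms $K$-rational,'' where the delicate point is arranging the full endomorphism algebra to descend to $K$ rather than only to a finite extension. Granting that translation, the present corollary is an immediate combination of the TKND verification above with Theorem~\ref{Kab:AVKF}.
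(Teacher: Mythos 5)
Your decomposition is exactly the paper's: the paper proves Theorem~\ref{Kab:TKND-AVKF} in one line by observing that $K^{\mrm{ab}}$ is always TKND and then invoking Theorem~\ref{Kab:AVKF}, whose content is precisely the three-way equivalence (no CM subfield) $\Leftrightarrow$ ($A(K^{\mrm{ab}})_{\mrm{tor}}$ finite for all $A/K$) $\Leftrightarrow$ ($K^{\mrm{ab}}$ AVKF), established via Zarhin's theorem and the reflex-field formalism just as you anticipate. So the AVKF half and the overall architecture of your proposal match the paper.

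The one genuine gap is in your sketch of the TKND half. You assert that a non-torsion element $\alpha\in L^{\times}$ has nonzero valuation at some finite place; this is false for units of the ring of integers (e.g.\ $1+\sqrt{2}$), so the valuation argument by itself does not rule out divisible non-torsion elements and does not establish $\times\mu$-indivisibility of the finite extensions of $K^{\mrm{ab}}$. The paper does not attempt this argument: it cites Tsujimura's result (\cite[Proposition~1.2]{Tsu2}) that $K^{\mrm{ab}}$ is \emph{stably} $\times\mu$-indivisible, which implies TKND by the implication diagram recalled at the start of Section~3 (one gets $(K^{\mrm{ab}})_{\mrm{div}}=\mbb{Q}^{\mrm{cyc}}$, over which $\overline{\mbb{Q}}$ is of infinite degree). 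If you want a self-contained proof of the TKND half rather than a citation, you need an argument that also handles global units; the valuation bookkeeping you describe only constrains elements with nontrivial divisor. Everything else, including your identification of the reflex-field bookkeeping as the real work hidden inside Theorem~\ref{Kab:AVKF}, is consistent with the paper.
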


\begin{acknowledgments}
The authors would like to express their sincere gratitude to 
Yuichiro Taguchi for useful discussions and comments on the arguments of Section 2.  
They thank also Shota Tsujimura for explaining us his results \cite{Tsu3}.
The first author is supported by JSPS KAKENHI Grant Numbers JP22KJ1291 and JP24K16890.
The second author is supported by JSPS KAKENHI Grant Number JP19K03433.
\end{acknowledgments}

\vspace{5mm}
\noindent
{\bf Notation:} 
For any perfect field $F$,  
we denote by $G_F$  the absolute Galois group of $F$
(for a given algebraic closure $\overline{F}$ of $F$).
We denote by $\mu_n(F)$ the set of $n$-th roots of unity in $F$, 
$\mu_{p^{\infty}}(F):=\bigcup_{m\ge 0} \mu_{p^m}(F)$
for any prime  $p$ and 
$\mu_{\infty}(F):=\bigcup_{n\ge 1} \mu_{n}(F)$. 
We denote by $F^{\mrm{cyc}}$ the maximal cyclotomic extension field 
$F(\mu_{\infty}(\overline{F}))$ of $F$,
and also denote by $F^{\mrm{ab}}$ 
the maximal abelian extension field of $F$.
For a subset $S$ of $F$, we denote by $S^{1/\infty}\subset \overline{F}$
the set of all roots of all elements of $S$
and, for any prime $p$, denote by $S^{1/p^{\infty}}\subset \overline{F}$ 
the set of all $p$-power roots of all elements of $S$.
It should be remarked that, if $S$ contains nonzero elements of $F$, then we have 
$\mu_{\infty}(\overline{F})\subset F(S^{1/\infty})$ and $\mu_{p^{\infty}}(\overline{F})\subset F(S^{1/p^{\infty}})$.

We denote by $\mfrak{Primes}$ the set of prime numbers.
For any prime $p$, 
we denote by $\chi_p$ (resp.\ $\bar{\chi}_p$) the $p$-adic cyclotomic character
(resp.\ the mod $p$ cyclotomic character) 
defined over an appropriate Galois group
in the context.

\section{Finiteness and vanishing of cohomologies}
The goal in this section is to show Theorems \ref{MT:tor} and \ref{AVfinite}.
Before proofs, we give some preliminary results.
Especially, Proposition \ref{KT} is the most important key tool for our proofs,
which is essentially given by Kubo and Taguchi \cite{KT}.

\subsection{Some remarks on Kummer theory and Kubo-Taguchi's methods}

We start with some elementary properties.
\begin{proposition}
\label{pro-ell}
Let $p$ be a prime,
$F$ a perfect field of characteristic $\not= p$
and $S$ a subset of $F$.

\noindent
{\rm (1)}  Assume $F\supset \mu_{p^{\infty}}(\overline{F})$.
Then, $F(S^{1/{p^{\infty}}})$ is a pro-$p$ abelian extension of $F$.

\noindent
{\rm (2)}  Assume $F\supset \mu_{p^{\infty}}(\overline{F})$.
Let $\Delta$ be a finitely generated subgroup of $F^{\times}$
and  $\Delta_{\mrm{tor}}$ the torsion subgroup of $\Delta$.
Then, $\mrm{Gal}(F(\Delta^{1/p^{\infty}})/F)$ is isomorphic to 
$\mbb{Z}_{p}^d$ for some $d\le \mrm{rank}\ \Delta/\Delta_{\mrm{tor}}$.

\noindent
{\rm (3)} Let $F'$ be a Galois extension of $F$ with 
$F'\supset \mu_{p^{\infty}}(\overline{F})$.
Set $G:=\mrm{Gal}(F'(S^{1/p^{\infty}})/F)$
and $H:= \mrm{Gal}(F'(S^{1/p^{\infty}})/F')$.
Then, we have
$\sigma \tau \sigma^{-1}=\tau^{\chi_{p}(\sigma)}$
for $\sigma\in G$
and $\tau\in H$.
\end{proposition}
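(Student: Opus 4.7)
The plan is to derive all three parts from standard Kummer theory, exploiting the hypothesis that $\mu_{p^\infty}(\overline{F})$ lies in the base field.

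For (1), I would use the standard observation that, for every $s \in S$ and every $n \ge 1$, the map $\sigma \mapsto \sigma(s^{1/p^n})/s^{1/p^n}$ gives a well-defined injective homomorphism from $\mrm{Gal}(F(s^{1/p^n})/F)$ into $\mu_{p^n}(\overline{F})$, which is cyclic of $p$-power order. So each finite subextension is cyclic of $p$-power order, and passing to the compositum over all $s \in S$ and all $n$ yields that $F(S^{1/p^\infty})/F$ is pro-$p$ abelian.

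For (2), I would first reduce to the torsion-free part. Write $\Delta = \Delta_{\mrm{tor}} \cdot \langle a_1, \ldots, a_r \rangle$ with $r = \mrm{rank}\, \Delta/\Delta_{\mrm{tor}}$. Every element of $\Delta_{\mrm{tor}}$ is a root of unity, and since $F \supset \mu_{p^\infty}(\overline{F})$ all $p$-power roots of such elements already lie in $F$; hence $F(\Delta^{1/p^\infty}) = F(a_1^{1/p^\infty}, \ldots, a_r^{1/p^\infty})$. At each level $n$, the Kummer pairing embeds $\mrm{Gal}(F(a_1^{1/p^n}, \ldots, a_r^{1/p^n})/F)$ into $\mu_{p^n}(\overline{F})^r \cong (\mbb{Z}/p^n\mbb{Z})^r$, and taking the inverse limit gives a closed subgroup of $\mbb{Z}_p^r$. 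Such a subgroup is torsion-free and finitely generated as a $\mbb{Z}_p$-module, hence isomorphic to $\mbb{Z}_p^d$ for some $d \le r$.

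For (3), I plan a direct computation on Kummer generators. Since $F'/F$ is Galois and $F'$ contains $\mu_{p^\infty}(\overline{F})$, the compositum $F'(S^{1/p^\infty})/F$ is Galois, so $H$ is normal in $G$; moreover, by (1) applied with $F'$ in place of $F$, the group $H$ is pro-$p$ abelian, so one can exponentiate elements of $H$ by $\mbb{Z}_p^\times$ by continuity. For $s \in S$ and a $p^n$-th root $t$ of $s$, set $\kappa(\tau) := \tau(t)/t \in \mu_{p^n}(\overline{F})$ and $\zeta := \sigma^{-1}(t)/t \in \mu_{p^n}(\overline{F})$. A short computation using that $\tau$ fixes roots of unity while $\sigma$ acts on $\mu_{p^\infty}(\overline{F})$ via $\chi_p(\sigma)$ yields $\sigma\tau\sigma^{-1}(t) = \kappa(\tau)^{\chi_p(\sigma)}\, t = \tau^{\chi_p(\sigma)}(t)$. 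Since such Kummer generators together with $F'$ generate $F'(S^{1/p^\infty})$, the identity $\sigma\tau\sigma^{-1} = \tau^{\chi_p(\sigma)}$ follows. No serious obstacle is expected; the whole proposition is a textbook Kummer-theoretic calculation, and the only mild subtleties are the classification of closed subgroups of $\mbb{Z}_p^r$ invoked in (2) and the continuity argument needed in (3) to interpret the exponent $\chi_p(\sigma) \in \mbb{Z}_p^\times$.
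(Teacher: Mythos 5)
Your proposal is correct and follows essentially the same route as the paper: parts (1) and (2) via the Kummer-theoretic embedding of the Galois group into a product of cyclic $p$-groups (resp.\ copies of $\mbb{Z}_p$), after reducing $\Delta$ to lifts of a basis of $\Delta/\Delta_{\mrm{tor}}$, and part (3) via a direct computation on Kummer generators using that $\tau$ fixes the $p$-power roots of unity while $\sigma$ acts on them through $\chi_p$. The only differences are presentational (finite levels plus inverse limits versus working with $\mbb{Z}_p$ directly, and multiplicative cocycle notation versus exponents of a fixed primitive root of unity), so nothing further is needed.
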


\begin{remark}
The extension $F'(S^{1/p^{\infty}})/F'$
in (3) is a pro-$p$ abelian extension by (1) so that 
there exists a natural $\mbb{Z}_{p}^\times$-action on 
its Galois group $H$. 
Thus the $\chi_{p}(\sigma)$-th power $\tau^{\chi_{p}(\sigma)}$ 
of $\tau$ in (3) is well-defined.
\end{remark}

\begin{proof}
Throughout the proof, we may assume that $S$ is not empty.

(1) First we note that the group $\mrm{Gal}(F(S^{1/p^{\infty}})/F)$
is naturally regarded as a closed subgroup of 
$\prod_{s\in S} \mrm{Gal}(F(s^{1/p^{\infty}})/F)$.
Furthermore, it follows from Kummer theory that 
$\mrm{Gal}(F(s^{1/p^{\infty}})/F)$ is isomorphic to 
$\{0\}$ or $\mbb{Z}_{p}$ if $s$ 
is an $p$-divisible element of $F^{\times}$ or not,
respectively. Now the result follows.

(2) Set $\delta:=\mrm{rank}\ \Delta/\Delta_{\mrm{tor}}$
and take lifts $x_1,\dots ,x_{\delta}\in \Delta$ of a basis of the free abelian group 
$\Delta/\Delta_{\mrm{tor}}$.  
It is not difficult to check that the field  
$F(\Delta^{1/p^{\infty}})$ is the composite of 
$F(x_i^{1/p^{\infty}})$ for all $1\le i\le \delta$.
Thus  the group $\mrm{Gal}(F(\Delta^{1/p^{\infty}})/F)$
is naturally regarded as a closed subgroup of 
$\prod^{\delta}_{i=1} \mrm{Gal}(F(x_i^{1/p^{\infty}})/F)$.
Since each $\mrm{Gal}(F(x_i^{1/p^{\infty}})/F)$ is  isomorphic to $\{0\}$ or $\mbb{Z}_{p}$,
we obtain the desired result.

(3) It suffices to show that, 
for $n>0$, $\sigma\in G$, $\tau\in H$ and any non-zero $x\in \overline{F}$
with $x^{p^n}\in S$, it holds that 
$\sigma \tau \sigma^{-1}(x)=\tau^{\chi_{p}(\sigma)}(x)$.
Let $\zeta\in \overline{F}$ be a primitive $p^n$-th root of unity.
Since $S$ is a subset of $F$, for any $\rho\in G$, 
we see that $\rho(x)=\zeta^{c(\rho)} x$ 
with some integer $c(\rho)$. 
We give some remarks for $c(\rho)$. 
First, we have $x=\rho \rho^{-1}(x)=\zeta^{\chi_{p}(\rho)c(\rho^{-1})+c(\rho)}x$
and thus $\zeta^{\chi_{p}(\rho)c(\rho^{-1})+c(\rho)}=1$.
Next, if $\rho\in H$, we see $\rho^n(x)=\zeta^{nc(\rho)}x$ for $n\in \mbb{Z}$,
and this equality holds also for $n\in \mbb{Z}_{p}$
by continuity. 
Form these remarks, for $\sigma\in G$ and $\tau\in H$, we find
$\sigma \tau \sigma^{-1}(x)=\sigma \tau (\zeta^{c(\sigma^{-1})} x)
=\sigma(\zeta^{c(\sigma^{-1})+c(\tau)}x)
=\zeta^{\chi_{p}(\sigma)(c(\sigma^{-1})+c(\tau))+c(\sigma)}x
=\zeta^{\chi_{p}(\sigma)c(\sigma^{-1})+c(\sigma)} 
\cdot \zeta^{\chi_{p}(\sigma)c(\tau)}x
=\tau^{\chi_{p}(\sigma)}(x)$
as desired.
\end{proof}

We give a slight refinement of  Kubo and Taguchi \cite[Lemma 2.2]{KT},
which plays an important role in the next section. 
Here we recall that, 
for a field $E$ and an  $E$-representation $V$ of a group $G$,
we say that $G$ acts unipotently on $V$ 
if all the eigenvalues of the $\sigma$-action on $V$ 
are $1$ for any $\sigma\in V$.

\begin{proposition}
\label{KT}
Let $G$ be a profinite group and $H$
a closed normal abelian subgroup of $G$.
Assume the following hypothesis:
\begin{itemize}
\item[{\rm ({\bf H})}]
There exist an integer $c>1$ and  $\sigma_0\in G$
such that, for any $\tau\in H$, it holds that $\sigma_0 \tau \sigma_0^{-1}=\tau^c$.
\end{itemize}
Let $d$ be a positive integer.
Let $E$ be a topological field and $V$ a continuous $E$-linear
representation of $G$ of dimension $\le d$.
We set 
$$
m:=\mrm{lcm}\{c^r-1\mid 1\le r\le d \}
$$
where $c$ is $($any choice of$)$ the constant in {\rm ({\bf H})}. Then the following hold.

\noindent
{\rm (1)} $H^m$ acts unipotently on $V$.

\noindent
{\rm (2)} There exists an open normal subgroup $H'$ of $H$
such that $H'$ acts unipotently on $V$.

\noindent
{\rm (3)} Assume that $H$ is pro-$p$ for some prime  $p$.
If we denote by $m(p)$ the $p$-part of $m$, then
$H^{m(p)}$ acts unipotently on $V$. 
\end{proposition}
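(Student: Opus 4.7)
The plan is to analyze the eigenvalues of $\rho_V(\tau)$ in an algebraic closure $\overline{E}$ of $E$ and use hypothesis \textup{(\textbf{H})} to constrain them sharply. For (1), applying $\rho_V$ to \textup{(\textbf{H})} yields $\rho_V(\sigma_0)\rho_V(\tau)\rho_V(\sigma_0)^{-1}=\rho_V(\tau)^c$, so $\rho_V(\tau)$ and $\rho_V(\tau)^c$ are conjugate in $\mathrm{GL}(V)$ and hence share the same (multiset of) eigenvalues in $\overline{E}$. Consequently the set of distinct eigenvalues of $\rho_V(\tau)$ is stable under $\lambda\mapsto\lambda^c$, and since it has cardinality at most $d$, the orbit of any eigenvalue $\lambda$ has length $r\le d$; this gives $\lambda^{c^r}=\lambda$, hence $\lambda^{c^r-1}=1$, hence $\lambda^m=1$. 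Therefore $\rho_V(\tau^m)=\rho_V(\tau)^m$ has only $1$ as eigenvalue and is unipotent.

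For (2) I would exploit a finiteness phenomenon coming from (1). Since each eigenvalue of $\rho_V(\tau)$ lies in the finite set $\mu_m(\overline{E})$, the characteristic polynomial map
\[
\chi\colon H\longrightarrow E[X],\qquad \tau\longmapsto\det\!\bigl(X\cdot\mathrm{id}_V-\rho_V(\tau)\bigr),
\]
takes values in the finite set $\Sigma\subset E[X]$ of monic degree-$d$ polynomials whose roots all lie in $\mu_m(\overline{E})$. The map $\chi$ is continuous, and because $E$ (hence $E[X]_{\le d}$) is Hausdorff, the finite subset $\Sigma$ is discrete, so $\chi$ is locally constant. Thus $H':=\chi^{-1}\bigl((X-1)^d\bigr)$ is open in $H$. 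Since $H$ is abelian the matrices $\rho_V(\tau)$ commute pairwise, and a product of commuting unipotent matrices is unipotent, so $H'$ is a subgroup; it is normal in $H$ automatically.

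For (3), assume $H$ is pro-$p$. Then the continuous image $\rho_V(H)$ is a pro-$p$ abelian subgroup of $\mathrm{GL}(V)$; in any such group, checking on each finite $p$-group quotient shows $t^{p^k}\to 1$ as $k\to\infty$, so $\rho_V(\tau)^{p^k}\to\mathrm{id}_V$, and by continuity of $\chi$ we have $\chi_{\rho_V(\tau)^{p^k}}(X)\to (X-1)^d$ in $E[X]_{\le d}$. But these characteristic polynomials all belong to the finite set $\Sigma$ of the previous paragraph, which is discrete in the Hausdorff space $E[X]_{\le d}$; a convergent sequence in a discrete set is eventually constant, so $\chi_{\rho_V(\tau)^{p^k}}(X)=(X-1)^d$ for $k\gg 0$. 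Hence every eigenvalue $\mu$ of $\rho_V(\tau)$ satisfies $\mu^{p^k}=1$ for some $k$, so $\mu$ is a $p$-power root of unity; combined with $\mu^m=1$ from (1), this forces $\mu^{m(p)}=1$, and therefore $\rho_V(\tau^{m(p)})$ is unipotent. The delicate point throughout—and what I expect to be the main obstacle—is to use only the assumption that $E$ is a Hausdorff topological field (rather than a $p$-adic or archimedean one) by always routing through the a priori finiteness of $\Sigma$, which upgrades continuous convergence of characteristic polynomials into eventual equality.
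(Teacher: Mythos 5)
Your proof is correct. Part (1) is essentially identical to the paper's argument: the relation $\sigma_0\tau\sigma_0^{-1}=\tau^c$ forces the eigenvalue multiset to be permuted by the $c$-th power map, so each eigenvalue satisfies $\lambda^{c^r-1}=1$ for some $r\le d$ and hence $\lambda^m=1$. For (2) and (3), however, you take a genuinely different route. The paper proves (2) by passing to a finite extension $E'$ of $E$, semisimplifying $V|_H$ into characters $H\to E'^{\times}$, observing via (1) that each character has values in the finite set $\mu_m$ and therefore factors through a finite quotient of $H$, and intersecting the kernels; you instead observe that the characteristic-polynomial map $\chi$ is continuous with values in a finite (hence discrete, granting $E$ Hausdorff) set, so it is locally constant and the fiber over $(X-1)^d$ --- which is exactly the unipotent locus, a subgroup because commuting unipotents multiply to unipotents --- is open and normal. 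For (3) the paper is terse: it simply asserts that when $H$ is pro-$p$ the root-of-unity eigenvalues must have $p$-power order (the intended justification being that the characters of the abelian pro-$p$ group $H$, landing in the finite cyclic group $\mu_m$, must have image a $p$-group, e.g.\ because an abelian pro-$p$ group is a $\mathbb{Z}_p$-module and hence divisible by every prime $\neq p$). You supply a complete and somewhat more topological justification: $\rho(\tau)^{p^k}\to 1$ in the pro-$p$ image (which is homeomorphic to $H/\ker\rho$ by the compact-to-Hausdorff argument), so by continuity and discreteness of the finite set $\Sigma$ of admissible characteristic polynomials the sequence $\chi(\tau^{p^k})$ is eventually $(X-1)^d$, forcing each eigenvalue to be a $p$-power root of unity, whence $\mu^{m(p)}=1$. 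Your version leans a bit more on the Hausdorffness of $E$ (which the paper's character argument also implicitly needs, since it requires finite-image continuous characters to have open kernel), but it avoids the base change and semisimplification entirely and actually fills in the step that the paper leaves implicit in (3); the paper's version is more algebraic and generalizes more directly to the character-by-character analysis it reuses later in STEP 2 of Proposition \ref{Thm.B}.
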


\begin{proof}
Replacing $V$ with $V\oplus E^{d-\dim V}$, we may assume $\dim V=d$.
Take any $\tau \in H$ and let $\lambda_1,\dots ,\lambda_d$ 
be the eigenvalues of the $\tau$-action on $V$.
By ({\bf H}), we have an equality 
$\{\lambda_1,\dots ,\lambda_d\}=\{\lambda_1^c,\dots ,\lambda_d^c\}$
as multisets of $d$-elements.
 In other words, the $c$-th power map gives a permutation on 
the multiset $\{\lambda_1,\dots ,\lambda_d\}$.
Hence, for any $1\le i\le d$, there exists an integer $1\le r\le d$ such that 
$\lambda_i^{c^r}=\lambda_i$.
By definition of $m$, we have $\lambda_i^m=1$ for any $i$.
This shows (1). 
If furthermore $H$ is pro-$p$, 
the fact that  $\lambda_i$ is a root of unity 
implies that  $\lambda_i$  has a $p$-power order.
Thus we obtain  $\lambda_i^{m(p)}=1$ for any $i$. This shows (3).
It suffices to show (2). 
Since $H$ is abelian, if we take a finite extension $E'$ of $E$ large enough, 
the semisimplification of the restriction of $V\otimes_E E'$ to $H$ is a direct sum of 
characters $H\to E'^{\times}$. 
It follows from (1) that each character has values in the set of $m$-th roots of unity. 
In particular, each character factors through a finite quotient of $H$. 
This shows (2).  
\end{proof}

We often use the following  ``trace version" of the Brauer-Nesbitt theorem.

\begin{theorem}[{\cite[30.16]{CR} or \cite[Theorem 2.4.6 and Remark 2.4.7 (iv)]{Wi}}]
\label{BN}
Let $G$ be a group and $E$ a field.
Let $V$ and $V'$ be finite dimensional semi-simple $E$-representations of $G$.
Assume that $\mrm{char} \,E=0$ or 
$\mrm{char}\, E> \mrm{Max}\{\dim_E V, \dim_E V'\}$.
If $\mrm{Tr}(\sigma \mid V) = \mrm{Tr}(\sigma \mid V')$ for any $\sigma\in G$,
then we have $V\simeq V'$.
\end{theorem}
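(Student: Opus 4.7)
The plan is to reduce the question to the classical fact that the characters of non-isomorphic simple modules over a semi-simple $E$-algebra are linearly independent, and then to use the characteristic hypothesis to upgrade equality of multiplicities in $E$ to equality in $\mbb{Z}$.

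First I would let $A$ denote the image of the group algebra $E[G]$ in $\mrm{End}_E(V\oplus V')$. Since $V$ and $V'$ are semi-simple $E[G]$-modules of finite $E$-dimension, their direct sum is a semi-simple faithful $A$-module of finite length; by the Jacobson density theorem, $A$ coincides with its own bicommutant on $V\oplus V'$, and this forces $A$ to be a finite-dimensional semi-simple $E$-algebra. By Wedderburn--Artin, $A\simeq\prod_{i=1}^k M_{n_i}(D_i)$ for division $E$-algebras $D_i$; writing $S_i$ for the simple $A$-module corresponding to the $i$-th factor, I can decompose
\[
V\simeq \bigoplus_{i=1}^k S_i^{\oplus m_i},\qquad V'\simeq \bigoplus_{i=1}^k S_i^{\oplus m_i'}
\]
as $A$-modules, and the hypothesis yields the identity $\sum_i m_i\chi_i(a)=\sum_i m_i'\chi_i(a)$ in $E$ for every $a\in A$, where $\chi_i\colon A\to E$ is the character of $S_i$ (extending by $E$-linearity from $G$ to $A$).

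The remaining step is to pin down the multiplicities. I would use the central idempotents $e_1,\dots,e_k\in A$ coming from the Wedderburn decomposition, with $e_j$ acting as the identity on $S_j$ and as zero on $S_i$ for $i\ne j$. Substituting $a=e_j$ in the trace identity gives $m_j\cdot\dim_E S_j = m_j'\cdot\dim_E S_j$ in $E$. Since $S_j$ appears inside $V\oplus V'$, we have $\dim_E S_j\le \mrm{Max}\{\dim_E V,\dim_E V'\}$; together with the hypothesis on $\mrm{char}(E)$, this guarantees $\dim_E S_j\ne 0$ in $E$, so $m_j=m_j'$ in $E$. Finally, $m_j$ and $m_j'$ are non-negative integers bounded by $\mrm{Max}\{\dim_E V,\dim_E V'\}$, which is strictly less than $\mrm{char}(E)$ (or $\mrm{char}(E)=0$), so the equality in $E$ lifts to equality in $\mbb{Z}$. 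This yields $V\simeq V'$.

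The step I expect to be the most delicate is justifying the semi-simplicity of $A$ without extending scalars. In positive characteristic, extension of scalars to an algebraic closure of $E$ may destroy semi-simplicity of $V$ and $V'$, so one has to carry out the Wedderburn decomposition over $E$ itself; the central idempotents $e_j$ and the character values $\chi_j(e_j)=\dim_E S_j$ are then well-defined over $E$, and the characteristic hypothesis is used only through the two elementary bounds $\dim_E S_j,\, m_j < \mrm{char}(E)$ deduced above.
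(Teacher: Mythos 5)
Your proof is correct. The paper does not prove Theorem \ref{BN} itself but cites it to Curtis--Reiner and Wiese, and your argument --- Wedderburn decomposition of the image of $E[G]$ acting on $V\oplus V'$, evaluation of the linearized trace identity at the central idempotents to get $m_j\dim_E S_j=m_j'\dim_E S_j$ in $E$, and the characteristic bound to lift the resulting congruence on multiplicities to an equality of integers --- is precisely the standard proof underlying those references.
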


\begin{corollary}
\label{BN:unip}
Let $G$ be a group, $E$ a field and  
$V$ a finite dimensional non-zero $E$-representation of $G$.
Assume that $\mrm{char} \,E=0$ or 
$\mrm{char}\, E> \dim_E V$, 
and also assume  that $G$ acts unipotently on $V$.
Then,  any  irreducible  $G$-stable subquotient of $V$ is of dimension one with trivial $G$-action.
In particular, we have $V^G\not=0$.
\end{corollary}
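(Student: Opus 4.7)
The plan is to reduce the claim to the Brauer--Nesbitt theorem (Theorem \ref{BN}) by comparing an arbitrary irreducible $G$-stable subquotient $W$ of $V$ with a trivial representation of the same dimension. Writing $W = W_2/W_1$ for $G$-stable subspaces $W_1 \subseteq W_2 \subseteq V$, I would first observe that the characteristic polynomial of any $\sigma \in G$ on $V$ factors as the product of its characteristic polynomials on $W_1$ and on $W$; hence the multiset of eigenvalues of $\sigma$ on $W$ is contained in that on $V$, and unipotency descends from $V$ to $W$. In particular, $\mathrm{Tr}(\sigma \mid W) = \dim_E W$ for every $\sigma \in G$.

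Next, I would introduce the trivial $E$-representation $T := E^{\dim_E W}$ of $G$ (with trivial action), which is semi-simple and satisfies $\mathrm{Tr}(\sigma \mid T) = \dim_E W$. Since $W$ is irreducible it is also semi-simple, and the hypothesis $\mathrm{char}\, E = 0$ or $\mathrm{char}\, E > \dim_E V \ge \dim_E W$ is exactly what is needed to invoke Theorem \ref{BN}. The theorem then yields $W \simeq T$ as $E[G]$-modules; but an irreducible module isomorphic to a sum of copies of the trivial character is forced to be one-dimensional with trivial $G$-action, giving the first assertion.

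For the ``in particular'' clause, I would take a non-zero $G$-stable subspace $W_0 \subseteq V$ of minimal dimension (which exists as $V$ is non-zero and finite dimensional). Such a $W_0$ is automatically irreducible, hence by the first part it is one-dimensional with trivial $G$-action, so $0 \neq W_0 \subseteq V^G$.

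The only non-formal input is the stability of unipotency under passage to subquotients, which is elementary linear algebra; apart from this the argument is a direct application of Theorem \ref{BN}, so I do not anticipate any genuine obstacle.
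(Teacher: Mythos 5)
Your proof is correct and takes essentially the same route as the paper: both rest on the trace version of the Brauer--Nesbitt theorem (Theorem \ref{BN}) via the observation that unipotency forces $\mathrm{Tr}(\sigma)$ to equal the dimension. The only cosmetic difference is that the paper applies Theorem \ref{BN} once to the full semi-simplification $V^{\mathrm{ss}}$, obtaining $V^{\mathrm{ss}}\simeq E^{\dim_E V}$ and reading off all composition factors at once, whereas you apply it separately to each irreducible subquotient after noting that unipotency passes to subquotients.
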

\begin{proof}
Set $d:=\dim_E\, V$ and denote by $V^{\mrm{ss}}$ the semi-simplification of $V$.
Since we have $\mrm{Tr}(\sigma \mid V^{\mrm{ss}}) = \mrm{Tr}(\sigma \mid E^{d})\, (= d)$ 
for any $\sigma\in G$,
we obtain $V^{\mrm{ss}}\simeq E^{d}$ by Theorem \ref{BN}. 
Since any  irreducible  $G$-stable subquotient of $V$ is isomorphic to a composition factor of  $V^{\mrm{ss}}$,
the result follows.
\end{proof}

\subsection{Proof of Theorem \ref{MT:tor}}

\begin{proposition}
\label{Thm.A}
Let $p$ be a prime.
Let $K$ be a field of characteristic $0$ such that 
$\chi_{p}(G_K)$ is open in $\mbb{Z}_p^{\times}$.
Let $L$ be a Galois extension of $K$ with $L\supset \mu_{\infty}(\overline{K})$,
and set $M:=L(K^{1/\infty})$.
Let $E$ be an algebraic extension of $\mbb{Q}_p$
and $V$ a finite dimensional continuous $E$-representation
of $G_K$ which satisfies the following property; 
for each finite extension $L'$ of $L$, it holds that $V^{G_{L'}}=0$.
Then, we have $V^{G_{M}}=0$. 
\end{proposition}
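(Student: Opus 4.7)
My plan is to argue by contradiction: assuming $W:=V^{G_M}$ is nonzero, I will produce a finite extension $L'/L$ with $V^{G_{L'}}\ne 0$, contradicting the hypothesis. Since $L/K$ is Galois and $L\supset \mu_\infty(\overline{K})$, the extension $M/K$ is Galois, so $W$ is a $G_K$-subrepresentation and the action factors through $\mathrm{Gal}(M/K)$. The overall idea is to set up Proposition~\ref{KT} on the pro-$p$ Kummer extension $L_1(K^{1/p^\infty})/L_1$ for a suitable finite Galois extension $L_1/K$ containing $L$, and then invoke Corollary~\ref{BN:unip} to land in a finite extension of $L$. But before one can do this, the ``prime-to-$p$'' part of $\mathrm{Gal}(M/L)$ must be stripped away.

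Set $N_p:=L(K^{1/p^\infty})$. By Proposition~\ref{pro-ell}(1), each $H_\ell:=\mathrm{Gal}(L(K^{1/\ell^\infty})/L)$ is pro-$\ell$ abelian, and since Kummer extensions at distinct primes are linearly disjoint over $L$, the group $\mathrm{Gal}(M/N_p)=\prod_{\ell\ne p} H_\ell$ is pro-(prime-to-$p$) and abelian. First I would show that its image in $\mathrm{GL}(W)$ is finite: the compact image sits inside $\mathrm{GL}_n(\mathcal{O}_{E'})$ for some finite extension $E'$ of $\mathbb{Q}_p$ (after conjugation), the kernel of reduction modulo the maximal ideal is pro-$p$, and a pro-(prime-to-$p$) subgroup intersects this pro-$p$ kernel trivially, so the image embeds into the finite group $\mathrm{GL}_n(k_{E'})$.

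Let $K_0$ be the kernel of this finite action and set $L_0:=M^{K_0}$. Then $L_0/K$ is Galois (as $K_0$ is normal in $\mathrm{Gal}(M/K)$ since $W$ is $\mathrm{Gal}(M/K)$-stable), $L_0\supset N_p$, $[L_0:N_p]<\infty$, and $G_{L_0}$ acts trivially on $W$, so $W=V^{G_{L_0}}$. Writing $L_0=N_p(\alpha_1,\dots,\alpha_r)$ and taking $L_1$ to be the Galois closure over $K$ of $L(\alpha_1,\dots,\alpha_r)$ inside $L_0$, I obtain a finite Galois extension $L_1/K$ with $L_1\supset L$ and $L_0=L_1\cdot N_p=L_1(K^{1/p^\infty})$.

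Finally I apply Proposition~\ref{KT}(2) with $G:=\mathrm{Gal}(L_0/K)$, $H:=\mathrm{Gal}(L_0/L_1)$, and $W$ as the $G$-representation. Proposition~\ref{pro-ell}(1) gives that $H$ is pro-$p$ abelian, and normality follows from $L_1/K$ being Galois. Choosing $\sigma_0\in G_K$ with $\chi_p(\sigma_0)=c$ an integer $>1$---which exists because $\chi_p(G_K)$ is open in $\mathbb{Z}_p^\times$ and hence contains $1+p^N$ for some $N$---Proposition~\ref{pro-ell}(3) applied with $F=K$, $F'=L_1$, $S=K$ verifies hypothesis ({\bf H}). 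Proposition~\ref{KT}(2) then produces an open normal subgroup $H'\subset H$ acting unipotently on $W$, and Corollary~\ref{BN:unip} gives $W^{H'}\ne 0$. Setting $L_1':=L_0^{H'}$, the extension $L_1'/L_1$ (hence $L_1'/L$) is finite, yet $V^{G_{L_1'}}=W^{H'}\ne 0$, contradicting the assumption. The main obstacle is the compactness-plus-reduction argument of the second paragraph that forces the prime-to-$p$ Kummer directions to act through a finite quotient; once that is secured, the pro-$p$ step is a direct application of the Kubo--Taguchi machinery together with Brauer--Nesbitt.
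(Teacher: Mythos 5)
Your argument is correct and follows essentially the same route as the paper's own proof: first kill the prime-to-$p$ Kummer directions by noting that a pro-prime-to-$p$ group acting on a $p$-adic lattice meets the pro-$p$ reduction kernel trivially (so $V^{G_M}=V^{G_{L_0}}$ for $L_0$ finite over $L(K^{1/p^{\infty}})$), then descend $L_0$ to $L_1(K^{1/p^{\infty}})$ with $L_1/K$ finite Galois over $L$, and apply Proposition \ref{pro-ell}(3), Proposition \ref{KT}(2) and Corollary \ref{BN:unip} to contradict $V^{G_{L'}}=0$. The only cosmetic differences are that you reduce modulo the maximal ideal rather than modulo $p'\in\{p,4\}$ and you make the choice of the finite Galois extension $L_1$ slightly more explicit; both are harmless.
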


\begin{proof}
By continuity of $G_K$-action on $V$, 
there exist a finite extension $E_0/\mbb{Q}_p$  contained in $E$
and a  continuous $E_0$-representation $V_0$ 
of $G_K$ such that $V\simeq V_0\otimes_{E_0} E$.
Thus we may assume that $E$ is a finite extension of $\mbb{Q}_p$.
Assume that $V^{G_{M}}$ is not zero.
We set $L_q:=L(K^{1/q^{\infty}})$ for any prime $q$.
It follows from Proposition \ref{pro-ell} (1) and  $L\supset \mu_{q^{\infty}}(\overline{K})$
that $L_q$ is a pro-$q$ extension of $L$.
In particular, for any prime $q\not=p$, we have $L_q\cap L_p=L$.
We also set $H'_p:=\mrm{Gal}(M/L_p)$ 
and $G:=\mrm{Gal}(M/K)$.
Since $M$ is the composite of all $L_q$ for all primes $q$, 
we have 
$H_p'\hookrightarrow 
\prod_{q\not=p} \mrm{Gal}(L_qL_p/L_p)
\simeq 
\prod_{q\not=p} \mrm{Gal}(L_q/L)$,
which in particular shows that 
$H'_p$ is pro-prime-to-$p$.
Now we consider a natural $G$-action on $V^{G_{M}}$
coming from the $G_K$-action on $V^{G_{M}}$.
Let $\cO$ be the integer ring of $E$.
By continuity of a Galois action, there exists an $\cO$-lattice $\mcal{L}$ in 
 $V^{G_{M}}$ which is stable under the $G$-action. 
Let $\rho\colon G\to GL_{\cO}(\mcal{L})
\simeq GL_{t}(\cO)$ be the continuous homomorphism obtained by 
the $G$-action on $\mcal{L}$
where $t=\dim_E V^{G_M}$.
Set $p':=p$ or $p':=4$ if $p\not=2$ or $p=2$,
respectively.
Let $U$ be the kernel of 
the restriction to $H'_p$  of 
the composite of $\rho$ and the projection 
$GL_{t}(\cO)\to GL_{t}(\cO/p'\cO)$.
Then $U$ is an open subgroup of $H'_p$ 
and $\rho(U)$ 
has values in the kernel of the projection 
$GL_{t}(\cO)\to GL_{t}(\cO/p'\cO)$.
Since this kernel is pro-$p$ but $H'_p$ is pro-prime-to-$p$, 
we find that $\rho(U)$ is trivial.
If we denote by $\tilde{L}_p$  the  finite subextension
in $M/L_p$ corresponding to $U$, we obtain $V^{G_{M}}=V^{G_{\tilde{L}_p}}$. 
Since $L$ is a Galois extension of $K$, we can 
take a finite extension $L'$ of $L$ so that 
$L'$ is a Galois extension of $K$ and $\tilde{L}_p\subset L'_p$
where $L'_p:=L'(K^{1/p^{\infty}})$. 
We consider $V^{G_{L'_p}}(\not=0)$ as a representation of $\mrm{Gal}(L'_p/K)$.
Take $\sigma_0\in \mrm{Gal}(L_p'/K)$ such that 
$c:=\chi_{p}(\sigma_0)$ is an integer $>1$.
(Such $\sigma_0$ exists since $\chi_p(G_K)$ is open in $\mbb{Z}_p^{\times}$.)
By Proposition \ref{pro-ell} (3), we have 
$\sigma_0\tau\sigma_0^{-1}=\tau^{c}$ for any $\tau\in \mrm{Gal}(L_p'/L')$.
Thus it follows from Proposition \ref{KT} (2) that 
there exists a finite extension $L''$ of $L'$ contained in $L'_p$
such that $\mrm{Gal}(L'_p/L'')$ acts unipotently on $V^{G_{L'_p}}$.
Thus $V^{G_{L''}}=(V^{G_{L'_p}})^{\mrm{Gal}(L'_p/L'')}$ is not zero by 
Corollary \ref{BN:unip}
but this contradicts the assumption on $V$.
\end{proof}

\begin{proposition}
\label{Thm.B}
Let $K$ be a field of characteristic $0$ such that 
$\chi_{p}(G_K)$ is open in $\mbb{Z}_{p}^{\times}$ for every prime $p$.
Let $L$ be a Galois extension of $K$ with $L\supset \mu_{\infty}(\overline{K})$.
Let $\{ \mbb{E}_{\ell} \}_{\ell\in \mfrak{Primes}}$ be a family of 
algebraic extensions $\mbb{E}_{\ell}$ of $\mbb{F}_{\ell}$,
and let $\mathbf{W}=\{ W_{\ell} \}_{\ell\in \mfrak{Primes}}$ be a family of 
finite dimensional continuous $\mbb{E}_{\ell}$-representations $W_{\ell}$
of $G_K$ satisfying the following property.
\begin{itemize} 
\item[{\rm ({\bf H1})}] For each finite extension $L'$ of $L$, it holds that $W_{\ell}^{G_{L'}}=0$ 
for all but finitely many primes  $\ell$.
\end{itemize}
Let $\mu, C>0$ be integers.
Assume that the following conditions hold:
\begin{itemize} 
\item[{\rm ({\bf H2})}] $\bar{\chi}_{p}(G_K)\supset (\mbb{F}_{p}^{\times})^{\mu}$ 
for every prime $p>\mu C$.
\item[{\rm ({\bf H3})}] $\dim_{\mbb{E}_\ell} W_{\ell} <C$
 for all but finitely many primes $\ell$.
\end{itemize}
Let $\Delta$ be a finitely generated subgroup of $K^{\times}$.
We set 
$$
M:=L(\Delta^{1/p^{\infty}}, K^{1/{q}^{\infty}}
\mid p,q\in \mfrak{Primes}, p\le \mu C<q).
$$ 
Then, we have $W_{\ell}^{G_M}=0$ for all but finitely many primes $\ell$. 
\end{proposition}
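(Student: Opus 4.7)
The plan is to mimic the proof of Proposition~\ref{Thm.A} in the mod~$\ell$ setting by applying Proposition~\ref{KT}(3) in parallel to every pro-$p$ component of $\mrm{Gal}(M/L)$, producing a single finite extension $L^{\flat}/L$ independent of $\ell$ against which contradiction with (H1) is triggered.

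First I discard the finitely many primes for which $\dim_{\mbb{E}_\ell} W_\ell \ge C$ or $\ell \le \mu C$, so that henceforth $\ell > \mu C > \dim_{\mbb{E}_\ell} W_\ell$, making Corollary~\ref{BN:unip} applicable to any $\mbb{E}_\ell$-subquotient of $W_\ell$. Using openness of $\chi_p(G_K)$ in $\mbb{Z}_p^{\times}$, I fix once and for all, for each prime $p \le \mu C$, an element $\sigma_0^{(p)} \in G_K$ with $c_p := \chi_p(\sigma_0^{(p)})$ a positive integer $>1$, and set $m_p$ to be the $p$-part of $\mrm{lcm}\{c_p^r - 1 : 1 \le r \le C-1\}$. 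By Proposition~\ref{pro-ell}(2), $H_p := \mrm{Gal}(L(\Delta^{1/p^{\infty}})/L) \cong \mbb{Z}_p^{d_p}$ with $d_p \le \mrm{rank}\,\Delta$, so
\[
L^{\flat} := L \cdot \prod_{p \le \mu C} L(\Delta^{1/p^{\infty}})^{H_p^{m_p}}
\]
is a finite extension of $L$ of degree at most $\prod_{p \le \mu C} m_p^{\mrm{rank}\,\Delta}$, depending only on $K$, $\Delta$, $\mu$, and $C$.

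Now fix a prime $\ell > \mu C$ and suppose for contradiction $V := W_\ell^{G_M} \ne 0$. Since $M/K$ is Galois (as $L/K$ is and $\Delta, K \subseteq K$), $V$ is naturally a continuous $\mbb{E}_\ell$-representation of $G := \mrm{Gal}(M/K)$. The closed abelian normal subgroup $H := \mrm{Gal}(M/L)$ decomposes as the product $\prod_p H_p$ of its pro-$p$ components, each characteristic in $H$ hence normal in $G$. I consider
\[
H' := H_\ell \times \prod_{\substack{p > \mu C \\ p \ne \ell}} H_p \times \prod_{p \le \mu C} H_p^{m_p} \subseteq H,
\]
whose fixed field in $M$ is precisely $L^{\flat}$ by the Galois correspondence applied factor by factor; hence $V^{H'} = W_\ell^{G_{L^{\flat}}}$. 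The central claim is that $H'$ acts unipotently on $V$. For $H_\ell$ this is automatic in characteristic $\ell$: any continuous pro-$\ell$ action has only the eigenvalue $1$, since $X^{\ell^n}-1 = (X-1)^{\ell^n}$. For each prime $p > \mu C$ with $p \ne \ell$, (H2) combined with openness of $\chi_p(G_K)$ produces $\sigma_0 \in G_K$ with $c := \chi_p(\sigma_0) \in \mbb{Z}_{>1}$ whose residue $\bar c$ lies in $(\mbb{F}_p^{\times})^{\mu}$ and has order $\ge C$; this is possible since the cyclic group $(\mbb{F}_p^{\times})^{\mu}$ has order at least $(p-1)/\mu \ge C$. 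Then Proposition~\ref{pro-ell}(3) verifies hypothesis~(H) of Proposition~\ref{KT}(3) with this $\sigma_0$ and $c$, and the order condition on $\bar c$ forces $p \nmid c^r-1$ for $1 \le r \le \dim V < C$, so $m(p)=1$ and hence $H_p$ itself acts unipotently on $V$. For each $p \le \mu C$, the analogous application of Proposition~\ref{KT}(3) with our fixed $\sigma_0^{(p)}$ shows that $H_p^{m_p'}$ acts unipotently on $V$, where $m_p'$ is the $p$-part of $\mrm{lcm}\{c_p^r-1 : 1 \le r \le \dim V\}$; since $\dim V < C$ we have $m_p' \mid m_p$, so the smaller closed subgroup $H_p^{m_p} \subseteq H_p^{m_p'}$ also acts unipotently on $V$. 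Because $H'$ is abelian and topologically generated by these commuting unipotent closed subgroups, $H'$ itself acts unipotently on $V$. Corollary~\ref{BN:unip} then yields $W_\ell^{G_{L^{\flat}}} = V^{H'} \ne 0$, contradicting (H1) applied to the $\ell$-independent finite extension $L^{\flat}/L$ for all but finitely many $\ell$.

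The main obstacle is obtaining a finite extension via the Kubo--Taguchi method that is uniform in $\ell$: a direct mod~$\ell$ port of Proposition~\ref{Thm.A} would produce an extension varying with $\ell$, rendering (H1) useless. Hypothesis~(H2) plays the essential role of forcing $m(p)=1$ for every large prime $p > \mu C$, so that only the finitely many small primes $p \le \mu C$ contribute to $L^{\flat}$, and the finite rank of $\Delta$ then keeps $[L^{\flat} : L]$ finite.
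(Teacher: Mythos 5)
Your proof is correct, and while it runs on the same engine as the paper's --- Proposition \ref{pro-ell}, the $m(p)=1$ consequence of ({\bf H2}) for $p>\mu C$ via Proposition \ref{KT}(3), and Corollary \ref{BN:unip} --- it organizes the descent genuinely differently. The paper filters $M/L$ by the tower $L_{(0)}\subset L_{(1)}\subset\cdots$, first truncating the tail of large primes by a group-order argument (choosing $p_s$ beyond the prime divisors of $|GL_C(\mbb{E}_\ell)|$), then descending one prime at a time; for the small primes this forces an iterative construction of finite base-field extensions $K^t\subset\cdots\subset K^0$ (STEP 2), and the whole of STEP 3 is bookkeeping to certify that the resulting $L'$ depends only on $K,L,\Delta,\mu,C$ and not on $\ell$. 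You instead use the direct product decomposition $\mrm{Gal}(M/L)\cong\prod_p H_p$ (valid because the $L_p/L$ are pro-$p$ for distinct $p$, hence linearly disjoint) to treat all primes simultaneously: the subgroup $H'=\prod_{p>\mu C}H_p\times\prod_{p\le\mu C}H_p^{m_p}$ is a product of pairwise commuting unipotently-acting closed subgroups, hence acts unipotently, and its fixed field $L^{\flat}$ is visibly a finite extension of $L$ independent of $\ell$, so ({\bf H1}) applies at once. Your handling of the residual prime $p=\ell$ by characteristic-$\ell$ unipotence replaces the paper's $|GL_C(\mbb{E}_\ell)|$ truncation and is if anything more uniform. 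What the parallel route buys is the elimination of STEPs 2--3 and the $\langle\cdots\rangle$ dependency-tracking; what the paper's sequential route buys is that it never needs the global product decomposition of $\mrm{Gal}(M/L)$, only the two-step quotients $\mrm{Gal}(L_{(i)}/L_{(i-1)})$, which is the form in which Proposition \ref{pro-ell}(3) is literally stated (your version requires the small extra observation that the conjugation formula transfers to the factor $H_p$ inside $\mrm{Gal}(M/K)$, via $F'=\prod_{q\ne p}L_q$, which does hold). Two cosmetic points: you should, as the paper does, reduce to $\mbb{E}_\ell$ finite or note that continuity plus finite-dimensionality makes each $H_p$-action factor through a finite quotient, so that ``unipotent'' is a closed condition and passes to the topological closure of the group generated by your commuting unipotent subgroups; and $d_p$ is bounded by $\mrm{rank}\,\Delta/\Delta_{\mrm{tor}}$ rather than $\mrm{rank}\,\Delta$, which changes nothing.
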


\begin{remark}
\label{Thm.B:Rem}
(1) Consider the case where $K$ is a number field.
We denote by $p_0$ the maximal prime 
ramified in the maximal abelian extension $K_0$ of $\mbb{Q}$
contained in $K$ (we set $p_0:=1$ if $K=\mbb{Q}$).
Then, we have  $\bar{\chi}_{p}(G_K)=\mbb{F}_{p}^{\times}$
for every prime $p>p_0$.
(In fact, $\bar{\chi}_{p}(G_K)$ is equal to $\mbb{F}_{p}^{\times}$ 
if and only if $K_0\cap \mbb{Q}(\mu_p)=\mbb{Q}$,
and this is satisfied if $p$ is unramified in $K_0$.)

\noindent
(2) Let  $K$ be a subfield of $\overline{\mbb{Q}}$
and $K_0/\mbb{Q}$ the maximal abelian subextension of $K/\mbb{Q}$.
Assume that the following condition holds:
\begin{itemize}
\item[-] There exist integers  $\mu,\nu>0$ such that 
the absolute ramification index of any finite place of $K_0$
above any prime $p>\nu$
is a divisor of $\mu$.
\end{itemize} 
Then, it follows $\bar{\chi}_{p}(G_K)\supset (\mbb{F}_{p}^{\times})^{\mu}$
for every prime  $p>\nu$.
(In fact, the quotient $\mbb{F}_p^{\times}/\bar{\chi}_p(G_K)$ is isomorphic to 
$\mrm{Gal}(\mbb{Q}(\mu_p)\cap K_0/\mbb{Q})$ and 
the  order  of this Galois group is a divisor of $\mu$
if $p>\nu$.) 
\end{remark}

\begin{proof}[Proof of Proposition \ref{Thm.B}]
By continuity of $G_K$-action on $W_{\ell}$, 
we may assume that $\mbb{E}_{\ell}$ is a finite field for each $\ell$.
Denote by $\Delta_{\mrm{tor}}$ the torsion subgroup of $\Delta$.
Throughout the proof, we may assume  $\Delta \not= \Delta_{\mrm{tor}}$.
First we define some notations:
Denote by $\delta>0$ the rank of the free abelian group $\Delta/\Delta_{\mrm{tor}}$.
For any prime $p$, we set 
\begin{align*}
L_{p}
:=
\left\{
\begin{array}{cl}
L(\Delta^{1/p^{\infty}}) &\quad 
\mbox{if $p\le\mu C$ }, \cr
L(K^{1/p^{\infty}})
&\quad 
\mbox{if $p>\mu C$}.
\end{array}
\right.
\end{align*}
It follows from Proposition \ref{pro-ell} (1) 
and  $L\supset \mu_{p^{\infty}}(\overline{K})$
that 
$L_p$ is a pro-$p$ extension of $L$.
Note that the field $M$ is the composite of all $L_{p}$
where $p$ ranges all primes.  
Denote by $p_i$ the $i$-th prime, that is, $p_1=2<p_2=3<p_3=5<\cdots $.
For $i\ge 1$,
let us denote by $L_{(i)}$ the composite 
of all $L_{p_j}$ for  all $1\le j\le i$,
and set $L_{(0)}:=L$ for convenience.
Summary, we have 
$$
L=L_{(0)}\subset L_{(1)}\subset \cdots \subset 
L_{(i)}=L_{p_1}L_{p_2}\cdots L_{p_i} \subset 
\cdots \subset  M=\bigcup^{\infty}_{j=1} L_{(j)}.
$$
Since $L$ is a Galois extension of $K$, we see that 
$L_p$, $L_{(i)}$ and $M$ are also Galois extensions of $K$.

With above notations, let us start the proof of the proposition.
Let $\ell$ be any prime such that   
$\ell\ge C>\dim_{\mbb{E}_{\ell}} W_{\ell}$
and  $W_{\ell}^{G_M}$ is not zero.
The goal is to show that such $\ell$ must be  bounded above by 
some constant depending only on $\mathbf{W}, K,L, \Delta,\mu $ and $C$.
Let $s$ be any integer which satisfies the  properties that 
$p_s>\mu C$ and $p_s$ is strictly larger than the maximal prime divisor of 
the order of $GL_{C}(\mbb{E}_{\ell})$.
The Galois group $\mrm{Gal}(M/K)$ acts on $W_{\ell}^{G_M}$.
Since $\mrm{Gal}(M/L_{(s)})$ is prime-to-$p_i$ for any $1\le i\le s$
and $GL_{\mbb{E}_{\ell}}(W_{\ell}^{G_M})$ is (non-canonically) isomorphic 
to a subgroup of $GL_{C}(\mbb{E}_{\ell})$
by $C>\dim_{\mbb{E}_{\ell}} W_{\ell}$,
we find that $\mrm{Gal}(M/L_{(s)})$ acts trivially on $W_{\ell}^{G_M}$.
Thus we have $W_{\ell}^{G_M}=W_{\ell}^{G_{L_{(s)}}}$,
which is   not zero and is equipped with a natural  $\mrm{Gal}(L_{(s)}/K)$-action.
Now we define an integer $t \, (< s)$ by $p_t\le \mu C<p_{t+1}$; 
$$
p_1=2<p_2=3<\cdots <p_t\le \mu C
<p_{t+1}<\cdots <p_s.
$$

{\bf STEP 1.} We show that $W_{\ell}^{G_{L_{(t)}}}$ is not zero. 
Suppose that $p_i>\mu C$ and $W_{\ell}^{G_{L_{(i)}}}$ is not zero.
(Of course this holds for $i=s$.) 
Set $G:=\mrm{Gal}(L_{(i)}/K)$ and $H:=\mrm{Gal}(L_{(i)}/L_{(i-1)})$.
The group $G$ acts on $W_{\ell}^{G_{L_{(i)}}}$.
Since $\chi_{p_i}(G)$ is open in $\mbb{Z}^{\times}_{p_i}$
and $\bar{\chi}_{p_i}(G)\supset (\mbb{F}_{p_i}^{\times})^{\mu}$ by $p_i>\mu C$ 
and ({\bf H2}),
there exists an integer $c_i>1$ such that 
$c_i \mod p_i$ generates the cyclic group $(\mbb{F}^{\times}_{p_i})^{\mu}$
and $c_i=\chi_{p_i}(\sigma_i)$ for some $\sigma_i\in G$.
In particular, we have $\sigma_i \tau \sigma_i^{-1}=\tau^{c_i}$
for $\tau\in H$ by Proposition \ref{pro-ell} (3).  
Now we note that $c_i^r-1$ is prime to $p_i$ for $1\le r <C$.
(In fact, if we denote by $n_i$ the order of $c_i \mod p_i$,
then we have 
$C\le \frac{p_i-1}{\mu}\le \frac{p_i-1}{\mrm{gcd}\{\mu,p_i-1\}} = n_i$.)
Since $H$ is a closed normal subgroup of $G$
and also is pro-$p_i$ abelian by Proposition \ref{pro-ell} (2), 
it follows from Proposition \ref{KT} (3) that 
$H$ acts unipotently on $W_{\ell}^{G_{L_{(i)}}}$.
This gives the fact that 
$W_{\ell}^{G_{L_{(i-1)}}}=(W_{\ell}^{G_{L_{(i)}}})^H$ is not zero by
$\ell>\dim_{\mbb{E}_\ell} W_{\ell}$ and 
Corollary \ref{BN:unip}.
Repeating above arguments from the case $i=s$ to the case $i=t+1$,
we obtain the desired result.

\vspace{2mm}

{\bf STEP 2.} Assume that $W(i):=W_{\ell}^{G_{L^{i}_{(i)}}}$ is not zero
for some $1\le i\le t$ and some finite extension $K^{i}$ of $K$,
where  $L^{i}_{(i)}:=K^{i} L_{(i)}$.
(By STEP 1, we are in this situation if we set $i=t$ and $K^{t}=K$.)
Under this assumption, we show that 
$W(i-1):=W_{\ell}^{G_{L^{i-1}_{(i-1)}}}$ is not zero
by choosing a certain finite extension $K^{i-1}$ of $K^{i}$
depending only on $i,K^{i},L,\Delta$ and $C$.
Here, $L^{i-1}_{(i-1)}:=K^{i-1} L_{(i-1)}$.

(In the rest of the proof, if a notation $X$ is determined 
only by some given elements $\ast_1,\ast_2,\dots $,
then we often write $X=X\langle \ast_1,\ast_2,\dots \rangle$.)
Set $G_i:=\mrm{Gal}(L^{i}_{(i)}/K^{i})$ and 
$H_i:=\mrm{Gal}(L^{i}_{(i)}/L^{i}_{(i-1)})$
where $L^{i}_{(i-1)}:=K^iL_{(i-1)}$.
The group $G_i$ acts on $W(i)$.
Since $H_i$ is abelian, 
the semi-simplification of the restriction of
$W(i)\otimes_{\mbb{E}_\ell} \overline{\mbb{F}}_\ell$ to $H_i$
is isomorphic to 
$\overline{\mbb{F}}_\ell(\psi_1)\oplus \overline{\mbb{F}}_\ell(\psi_2) \oplus \cdots $
for some continuous characters $\psi_i\colon H_i\to \overline{\mbb{F}}^{\times}_\ell$.
Fix a choice of an integer 
$c=c\langle p_i,K^{i}\rangle =c\langle i,K^{i}\rangle>1$ 
such that $c\in \chi_{p_{i}}(G_{K^{i}})$.
(Such $c$ exists since $\chi_{p_{i}}(G_{K^{i}})$ is open in $\mbb{Z}_{p_i}^{\times}$.)
If we denote by $m=m\langle p_i,c,C\rangle
=m\langle i,K^{i},C\rangle$ the $p_i$-part of 
$
\mrm{lcm}\{c^r-1\mid 1\le r< C \},
$
it follows from Proposition \ref{pro-ell} (3) and Proposition \ref{KT} (3) that 
$H_i^{m}$ acts unipotently on $W(i)$.
This implies that  each $\psi_j$ has values in 
the set  of $m$-th roots of unity in $\overline{\mbb{F}}^{\times}_\ell$.
Since $H_i$ is isomorphic to $\mbb{Z}_{p_i}^{\delta_i}$ for some $\delta_i\le \delta$
by Proposition \ref{pro-ell} (2),
there exist only finitely many open subgroups of $H_i$
of  index $\le m$. 
Hence, the intersection  $H_i'$ of all such open subgroups of $H_i$ 
is of finite index in $H_i$. 
By construction, each $\psi_j$ is trivial on $H_i'$ 
and thus we find that $H_i'$ acts unipotently on $W(i)$.
In particular,  $W(i)^{H_i'}$ is not zero by 
$\ell>\dim_{\mbb{E}_\ell} W_{\ell}$
and Corollary \ref{BN:unip}.
Let us denote by  $N/L^{i}_{(i-1)}$ the finite subextension in 
$L^{i}_{(i)}/L^{i}_{(i-1)}$ corresponding to $H_i'$.
Since $L^{i}_{(i)}=K^{i}L(\Delta^{1/p_j^{\infty}}\mid 1\le j\le i)$
and $L^{i}_{(i-1)}=K^{i}L(\Delta^{1/p_j^{\infty}}\mid 1\le j\le i-1)$,
we find that $N$ is determined depending only 
on $K^{i}$, $L$, $p_1,\dots ,p_i$, $\Delta$ and $m$, namely, 
$N=N\langle K^{i}, L, p_1,\dots ,p_i, \Delta, m \rangle
=N\langle i,K^{i},L, \Delta,C \rangle $.
By construction of $N$, we know that 
$W_{\ell}^{G_{N}} \, (=W(i)^{H_i'} )$ is not zero.
Choose a finite extension $K^{i-1}$ of $K^{i}$
so that  $N= K^{i-1} L^{i}_{(i-1)}=L^{i-1}_{(i-1)}$.
Then   $W_{\ell}^{G_{L^{i-1}_{(i-1)}}}$ is not zero.
Since we may choose $K^{i-1}$ depending only on  
$i,K^{i},L, \Delta$ and $C$, 
we obtain the desired result.

\vspace{2mm}

{\bf STEP 3.} 
By STEP 1 and STEP 2, 
we obtain a sequence of finite extensions
$$
K=K^{t}\subset K^{t-1}\subset K^{t-2}\subset \cdots \subset K^1 \subset K^0
$$ 
(in $M$) depending only on $t, K, L, \Delta$ and $C$,
with the property that 
$W_{\ell}^{G_{L^i_{(i)}}}$ is not zero for any $0\le i\le t$.
Here, $L^{i}_{(i)}:=K^{i} L_{(i)}$.
In particular, $W_{\ell}^{G_{L'}}$ is not zero where  $L':=L^0_{(0)}=K^0L$.
We should remark that, since $t$ is uniquely determined by the condition 
$p_t\le \mu C<p_{t+1}$, 
one may say that the above field extensions are determined by 
$K, L, \Delta,\mu$ and $C$.  
By ({\bf H1}), there exists  a constant $B=B\langle \mathbf{W}, L'\rangle$ 
such that $\ell<B$.
Since the construction of $L'$ depends only on $K, L, \Delta,\mu$ and $C$,
we obtain the desired condition $\ell<B=B\langle \mathbf{W}, K,L, \Delta, \mu, C\rangle$.
\end{proof}

For a subfield $K$ of $\overline{\mbb{Q}}$, we say that 
{\it $K$ is of uniformly bounded ramification indexes} if 
there exists an integer $\nu>0$ such that the 
absolute ramification index of any finite place of $K$
is at most $\nu$.
Clearly,  any number field is of uniformly bounded ramification  indexes.
Moreover, if we denote by $\mbb{Q}(d)_{\mrm{ab}}$ 
the composite of all abelian extensions of $\mbb{Q}$ of degree $\le d$ for a given $d$,
we find that $\mbb{Q}(d)_{\mrm{ab}}$  is of uniformly bounded ramification indexes
(see Proposition \ref{unif} below).
We recall that $h_i(X)$ is the $i$-th Betti number of the topological space $X(\mbb{C})$
for any algebraic variety $X$ over $K$
(which is independent of the choice of 
an embedding $\overline{K}\hookrightarrow \mbb{C}$).
It is well-known that $h_i(X)$ 
coincides with the $\mbb{Q}_{\ell}$-dimension of the \'etale cohomology
$H^i_{\aet}(X_{\overline{K}},\mbb{Q}_{\ell})$ 
since there exists a (natural) isomorphism 
$H^i_{\aet}(X_{\overline{K}},\mbb{Q}_{\ell})\simeq 
H^i_{\mrm{sing}}(X(\mbb{C}),\mbb{Q})\otimes_{\mbb{Q}} \mbb{Q}_{\ell}$
where $H^i_{\mrm{sing}}(X(\mbb{C}),\mbb{Q})$ is the singular cohomology of $X(\mbb{C})$.
We also recall that $X_{\overline{K}}:=X\otimes_K \overline{K}$.

Now we are ready  to prove  Theorem \ref{MT:tor}.  
In fact, we can prove the following refined statement.

\begin{theorem}
\label{MT:tor2}
Let $K$ be a subfield of a finite extension of $\mbb{Q}^{\mrm{cyc}}$
with uniformly bounded ramification  indexes. 
Let $X$ be a smooth proper geometrically connected algebraic variety over $K$.
Let $i>0$ be an odd integer.

\noindent
{\rm (1)} There exists a constant $D>0$, depending only on 
$K$ and $h_i(X)$, which satisfies the following property:
Let $\Delta$ be a finitely generated subgroup  of $K^{\times}$.
We set 
$$
M:=K(\Delta^{1/p^{\infty}}, K^{1/{q}^{\infty}}
\mid p,q\in \mfrak{Primes}, p\le D<q).
$$
Then, the group $H^i_{\aet}(X_{\overline{K}},\mbb{Q}/\mbb{Z}(j))^{G_M}$ is finite 
for any $j$. 

\noindent
{\rm (2)} If we set  $D$ as follows, then $D$ satisfies 
the property in {\rm (1)}. 
Here, $K_0$ is the maximal abelian subextension 
in $K/\mbb{Q}$.
\begin{itemize}
\item[{\rm (2-1)}] $D=\mu (h_i(X)+1)$. 
Here,  $\mu>0$ is any integer such that 
the absolute ramification index of any finite place of $K_0$ 
above any prime $p>\mu (h_i(X)+1)$ is a divisor of $\mu$. 
{\rm (}Such $\mu$ exists 
since $K$ is of uniformly bounded ramification  indexes{\rm )}.
\item[{\rm (2-2)}] $D=\mrm{max}\{h_i(X)+1, p_0\}$
if $K$ is a number field. 
Here,  $p_0$ is the maximal prime 
ramified in $K_0$  {\rm (}we set $p_0:=1$ if $K_0=\mbb{Q}${\rm )}.
\end{itemize}

\end{theorem}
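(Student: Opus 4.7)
Setting $L := K^{\mrm{cyc}}$, $V_\ell := H^i_{\aet}(X_{\overline{K}}, \mbb{Q}_\ell(j))$, and $W_\ell := H^i_{\aet}(X_{\overline{K}}, \mbb{F}_\ell(j))$, my plan is to decompose
$$
H^i_{\aet}(X_{\overline{K}}, \mbb{Q}/\mbb{Z}(j))^{G_M} \;\cong\; \bigoplus_{\ell \in \mfrak{Primes}} H^i_{\aet}(X_{\overline{K}}, \mbb{Q}_\ell/\mbb{Z}_\ell(j))^{G_M}
$$
and argue that (I) each $\ell$-summand is finite via Proposition \ref{Thm.A}, and (II) all but finitely many summands vanish via Proposition \ref{Thm.B}, both invoked with this choice of $L$ (which is Galois over $K$ and contains $\mu_\infty(\overline{K})$).

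Before applying the two propositions I would establish two preliminary inputs. The uniformly bounded ramification hypothesis on $K$ forces $K \cap \mbb{Q}(\mu_{p^\infty})$ to have bounded degree over $\mbb{Q}$ for each prime $p$, so $\chi_p(G_K)$ is open in $\mbb{Z}_p^\times$; the same hypothesis together with Remark \ref{Thm.B:Rem}(2) yields $\bar\chi_p(G_K) \supset (\mbb{F}_p^\times)^\mu$ for the integer $\mu$ appearing in (2-1). Second, $X$ descends to some $X^\flat$ over a number field $K^\flat \subset K$, and since $K$ lies inside $F_0^{\mrm{cyc}}$ for some number field $F_0$, any finite extension $L'$ of $K^{\mrm{cyc}}$ can be realized as a subfield of $(K')^{\mrm{cyc}}$ for a suitable number field $K' \supset K^\flat \cdot F_0$. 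Applying R\"ossler-Szamuely to $X^\flat_{K'}$ then gives finiteness of $H^i_{\aet}(X_{\overline{K}}, \mbb{Q}/\mbb{Z}(j))^{G_{L'}}$, whence $V_\ell^{G_{L'}} = 0$ for every $\ell$ and $W_\ell^{G_{L'}} = 0$ for all but finitely many $\ell$.

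For Part (I), I apply Proposition \ref{Thm.A} with $p = \ell$ and $V = V_\ell$: both hypotheses were just verified, so $V_\ell^{G_{L(K^{1/\infty})}} = 0$; since $M \subset L(K^{1/\infty})$ (note $\mu_\infty \subset M$ thanks to $1 \in \Delta \cap K^\times$), we obtain $V_\ell^{G_M} = 0$, which by standard Tate-module formalism is equivalent to the finiteness of $H^i_{\aet}(X, \mbb{Q}_\ell/\mbb{Z}_\ell(j))^{G_M}$. For Part (II), I apply Proposition \ref{Thm.B} with $C := h_i(X) + 1$, with $\mu$ as above, with $\Delta$ as given, and with the family $\{W_\ell\}$: hypothesis ({\bf H1}) was checked above; ({\bf H2}) is the condition on $\bar\chi_p$; ({\bf H3}) follows from $\dim_{\mbb{F}_\ell} W_\ell \le h_i(X) < C$ for almost all $\ell$, namely those avoiding the finitely many torsion primes of $H^i_{\mrm{sing}}(X(\mbb{C}), \mbb{Z})$. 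The resulting field of Proposition \ref{Thm.B} then coincides with our $M$ for $D = \mu C = \mu(h_i(X)+1)$, proving (2-1). For the cofinite set of $\ell$ with $W_\ell^{G_M} = 0$, the divisibility of $H^i_{\aet}(X, \mbb{Q}_\ell/\mbb{Z}_\ell(j))$ combined with vanishing of its $\ell$-torsion invariants forces $H^i_{\aet}(X, \mbb{Q}_\ell/\mbb{Z}_\ell(j))^{G_M} = 0$. Combining (I) and (II) produces the desired finiteness, giving (1). Finally, for (2-2) in the number field case, Remark \ref{Thm.B:Rem}(1) permits taking $\mu = 1$ for primes $p > p_0$, and enlarging $D$ to $\max\{h_i(X)+1, p_0\}$ absorbs the remaining small primes where ({\bf H2}) with $\mu = 1$ would fail.

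I expect the chief technical obstacle to be the verification of hypothesis ({\bf H1})---that is, extracting R\"ossler-Szamuely-style finiteness over every finite extension $L'$ of $K^{\mrm{cyc}}$, whereas \cite{RS} is stated for number fields. Carefully packaging $L'$ inside the cyclotomic closure $(K')^{\mrm{cyc}}$ of a suitable number field $K'$---one absorbing both the descent field of $X$ and a generator of $L'/K^{\mrm{cyc}}$---is the step where the hypothesis that $K$ lies inside a finite extension of $\mbb{Q}^{\mrm{cyc}}$ with uniformly bounded ramification indexes is used essentially.
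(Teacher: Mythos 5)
Your proposal is correct and follows essentially the same route as the paper: reduce to the statements $V_\ell^{G_M}=0$ for all $\ell$ and $W_\ell^{G_M}=0$ for almost all $\ell$, verify the hypotheses of Propositions \ref{Thm.A} and \ref{Thm.B} with $L=K^{\mrm{cyc}}$ and the same constants $\mu$, $C$, and feed in R\"ossler--Szamuely for the input over finite extensions of $K^{\mrm{cyc}}$. The only cosmetic differences are that the paper invokes Propositions 2.1, 3.1 and 3.4 of \cite{RS} directly (which are already stated over finite extensions of $\mbb{Q}^{\mrm{cyc}}$, so your descent-to-a-number-field packaging for ({\bf H1}) is not needed) and uses Gabber's theorem rather than the singular-cohomology comparison for the dimension bound in ({\bf H3}).
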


\begin{proof} 
First we claim that there exists a constant 
$\ell_0=\ell_0(X,i)$  depending only on $X$ and $i$ 
with the property that $\dim_{\mbb{F}_{\ell}} W_{\ell}=h_i(X)$
for any prime  $\ell>\ell_0$
where  $W_{\ell}:=H^i_{\aet}(X_{\overline{K}},\mbb{F}_\ell(j))$.
This should be well-known to experts but we include an explanation
for the sake of completeness.
For any prime   $\ell$, 
the group $W_{\ell}$ is finite and 
the $\mbb{Z}_\ell$-module 
$H^i_{\aet}(X_{\overline{K}},\mbb{Z}_{\ell}(j))$ is finitely generated.
Furthermore, there exists a constant $\ell_0=\ell_0(X,i)$ 
depending only on $X$ and $i$ 
that the torsion subgroup 
of $H^i_{\aet}(X_{\overline{K}},\mbb{Z}_{\ell}(j))$ 
and $H^{i+1}_{\aet}(X_{\overline{K}},\mbb{Z}_{\ell}(j))$ are trivial 
for any prime $\ell>\ell_0$ by a result of Gabber \cite{Ga}. 
The $\mbb{Z}_\ell$-rank of $H^i_{\aet}(X_{\overline{K}},\mbb{Z}_{\ell}(j))$ 
coincides with the $\mbb{Q}_\ell$-dimension of 
$H^i_{\aet}(X_{\overline{K}},\mbb{Q}_{\ell}(j))$, 
which is just the $i$-th Betti number $h^i(X)$ of 
$X(\mbb{C})$. 
Note that we have an exact sequence 
$
0\to H^i_{\aet}(X_{\overline{K}},\mbb{Z}_{\ell}(j))/\ell 
\to W_{\ell}
\to H^{i+1}_{\aet}(X_{\overline{K}},\mbb{Z}_\ell(j))[\ell]
\to 0
$
of $G_K$-modules
coming from the long exact sequence associated with the multiplication-by-$\ell$ map.
Thus, for $\ell>\ell_0$, we have 
$\dim_{\mbb{F}_\ell} W_{\ell}
=\dim_{\mbb{F}_\ell} H^i_{\aet}(X_{\overline{K}},\mbb{Z}_{\ell}(j))/\ell 
=\mrm{rank}_{\mbb{Z}_\ell}\, H^i_{\aet}(X_{\overline{K}},\mbb{Z}_{\ell}(j))
=h^i(X)$.
Hence the claim follows.

Proofs for the cases (2-1) and (2-2) proceed in parallel.
We define some notations in each cases. Let $\mu>0$ and $C>0$ be as follows.
\begin{itemize}
\item Under the consideration of (2-1), let $\mu$ be as in (2-1) and $C:=h_i(X)+1$.
\item Under the consideration of (2-2), 
set $\mu:=1$ and $C:=\mrm{max}\{h_i(X)+1,p_0\}$.
\end{itemize}
In both cases, we have $D=\mu C$.
Set $L:=K^{\mrm{cyc}}$. 
Let  $M$ be the field as in the theorem, that is, 
$M=L(\Delta^{1/p^{\infty}}, K^{1/{q}^{\infty}}
\mid p,q\in \mfrak{Primes}, p\le D <q)$.

We proceed the proof of the theorem with above notations.
By Proposition 2.1 of \cite{RS},
it suffices to show 
\begin{itemize}
\item[(A)] $H^i_{\aet}(X_{\overline{K}},\mbb{Q}_\ell(j))^{G_M}=0$ 
for all primes $\ell$, and 
\item[(B)] $H^i_{\aet}(X_{\overline{K}},\mbb{F}_\ell(j))^{G_M}=0$ for all but finitely many 
primes $\ell$.
\end{itemize}
Here we remark that 
$\chi_p(G_K)$ is open in $\mbb{Z}_p^{\times}$
for every prime $p$ since $K$ is of 
uniformly bounded ramification  indexes. 
Set $V:=H^i_{\aet}(X_{\overline{K}},\mbb{Q}_{\ell}(j))$  and 
$M_{\infty}:=K(K^{1/\infty})$.
Since $L$ is a finite extension of $\mbb{Q}^{\mrm{cyc}}$, 
it follows from Proposition 3.1 of \cite{RS} that 
$V^{G_{L'}}=0$ for any finite extension $L'/L$.
Thus we obtain $V^{G_{M_{\infty}}}=0$  by Proposition \ref{Thm.A},
which shows (A) since $M$ is a subfield of   $M_{\infty}$. 
For the proof of (B), it is enough to check that 
the conditions  ({\bf H1}),  ({\bf H2}) and  ({\bf H3}) appeared 
in Proposition \ref{Thm.B} are satisfied 
under our situation with additional notation $\mbf{W}:=\{W_{\ell}\}_{\ell\in \mfrak{Primes}}=
\{H^i_{\aet}(X_{\overline{K}},\mbb{F}_\ell(j))\}_{\ell\in \mfrak{Primes}}$.
However, it is not difficult to check them; 
({\bf H1}) is a consequence of Proposition 3.4 of \cite{RS},
({\bf H2}) follows from Remark  \ref{Thm.B:Rem}  
and  ({\bf H3}) is clear by   the claim above.
\end{proof}

Here is an immediate consequence of 
Theorem \ref{MT:tor2}.
\begin{corollary}
\label{Thm.B2}
Let $K$ be a subfield of a finite extension of $\mbb{Q}^{\mrm{cyc}}$
with uniformly bounded ramification  indexes. 
Let $\{\Delta_{p}\}_{p\in \mfrak{Primes}}$ be a family of 
finitely generated subgroups $\Delta_{p}$ of $K^{\times}$.
We set 
$$
M:=K(\Delta_{p}^{1/p^{\infty}}\mid p\in \mfrak{Primes}).
$$
Let $X$ be a smooth proper geometrically connected algebraic variety over $K$.
Then, the group $H^i_{\aet}(X_{\overline{K}},\mbb{Q}/\mbb{Z}(j))^{G_M}$ is finite
for any odd $i$ and any $j$.
\end{corollary}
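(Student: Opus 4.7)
The plan is to reduce the statement directly to Theorem \ref{MT:tor2} by absorbing the ``high prime'' contributions $\Delta_p^{1/p^\infty}$ into the ambient term $K^{1/p^\infty}$ appearing in the statement of that theorem, and collapsing the finitely many ``low prime'' contributions into a single finitely generated subgroup of $K^\times$.

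First I would fix an odd integer $i$ and an integer $j$, and apply Theorem \ref{MT:tor2} (1) to produce a constant $D>0$ that depends only on $K$ and $h_i(X)$. Next I would set
$$
\Delta := \langle \Delta_p \mid p \in \mathfrak{Primes},\ p \le D \rangle \subset K^\times.
$$
Since only finitely many primes $p$ satisfy $p \le D$ and each $\Delta_p$ is finitely generated, $\Delta$ is a finitely generated subgroup of $K^\times$. Then I would set
$$
M_0 := K(\Delta^{1/p^\infty}, K^{1/q^\infty} \mid p,q \in \mathfrak{Primes},\ p \le D < q),
$$
which is exactly the field to which Theorem \ref{MT:tor2} (1) applies. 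That theorem then gives finiteness of $H^i_{\aet}(X_{\overline{K}}, \mbb{Q}/\mbb{Z}(j))^{G_{M_0}}$.

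The remaining step is to verify the inclusion $M \subset M_0$, from which the finiteness of $H^i_{\aet}(X_{\overline{K}}, \mbb{Q}/\mbb{Z}(j))^{G_M}$ follows immediately, since $G_{M_0} \subset G_M$ forces $H^{G_M} \subset H^{G_{M_0}}$. For the inclusion itself, I split by prime: if $p \le D$, then $\Delta_p \subset \Delta$, whence $\Delta_p^{1/p^\infty} \subset \Delta^{1/p^\infty} \subset M_0$; if $p > D$, then trivially $\Delta_p \subset K^\times$ gives $\Delta_p^{1/p^\infty} \subset K^{1/p^\infty} \subset M_0$. Hence every generator of $M$ over $K$ lies in $M_0$, and $M \subset M_0$ as required.

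I do not anticipate a genuine obstacle here: the corollary is essentially a repackaging of Theorem \ref{MT:tor2}, and the only content is the observation that the finiteness of $D$ allows one to replace the infinite family $\{\Delta_p\}_{p \in \mathfrak{Primes}}$ by a single finitely generated group below $D$ plus the already-permitted $K^{1/p^\infty}$ above $D$. The mildly delicate point is simply to choose $D$ before introducing $\Delta$, so that the argument does not run into a circular dependence of the constant on the subgroup $\Delta$ one is trying to construct.
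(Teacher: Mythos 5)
Your proof is correct and is exactly the deduction the paper intends: the paper states this corollary without proof as an "immediate consequence" of Theorem \ref{MT:tor2}, and the intended argument is precisely your reduction — collect the finitely many $\Delta_p$ with $p\le D$ into one finitely generated group $\Delta$, absorb the $\Delta_p^{1/p^\infty}$ for $p>D$ into $K^{1/p^\infty}$, and conclude via the inclusion $M\subset M_0$. Your remark that $D$ must be chosen before $\Delta$ (which is legitimate because Theorem \ref{MT:tor2} (1) makes $D$ depend only on $K$ and $h_i(X)$, not on $\Delta$) is the one point worth being careful about, and you handled it correctly.
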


The result below is an immediate consequence of Proposition  \ref{Thm.A}
(see the argument of (A) in the proof of Theorem   \ref{MT:tor2}).
\begin{theorem}
\label{MT:ptor}
Let $K$ be a subfield of a finite extension of $\mbb{Q}^{\mrm{cyc}}$
with uniformly bounded ramification  indexes. 
We set 
$$
M:=K(K^{1/\infty}).
$$
Let $X$ be a smooth proper geometrically connected algebraic variety over $K$.
Then, we have $H^i_{\aet}(X_{\overline{K}},\mbb{Q}_{\ell}(j))^{G_{M}}=0$  
for any odd $i$, any $j$ and any prime  $\ell$.
\end{theorem}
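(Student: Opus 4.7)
The plan is to apply Proposition \ref{Thm.A} directly. Given a prime $\ell$, I would take the prime $p$ in that proposition to be $\ell$, set $E:=\mbb{Q}_\ell$, $L:=K^{\mrm{cyc}}$, and $V:=H^i_{\aet}(X_{\overline{K}},\mbb{Q}_\ell(j))$. This is essentially the same strategy used to establish assertion (A) in the proof of Theorem \ref{MT:tor2}, only now applied uniformly in $\ell$ rather than just for a single auxiliary prime.

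First I would check that the field $M=K(K^{1/\infty})$ of the present statement coincides with the field $L(K^{1/\infty})$ appearing in Proposition \ref{Thm.A}. This is immediate: since $1\in K$, the set $K^{1/\infty}$ contains all $n$-th roots of $1$, so $\mu_\infty(\overline{K})\subset K(K^{1/\infty})$, and hence $K(K^{1/\infty})\supset K(\mu_\infty(\overline{K}))=K^{\mrm{cyc}}=L$. Thus $K(K^{1/\infty})=L(K^{1/\infty})$, and the extension $L/K$ is Galois, containing all roots of unity, as required.

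Next I would verify the two substantive hypotheses of Proposition \ref{Thm.A}. For the openness of $\chi_\ell(G_K)$ in $\mbb{Z}_\ell^\times$, I would reuse the observation (already invoked in the proof of Theorem \ref{MT:tor2}) that uniformly bounded ramification indexes force $\chi_p(G_K)$ to be open for every prime $p$. For the vanishing $V^{G_{L'}}=0$ for every finite extension $L'/L$, I would first note that $K$ is by hypothesis a subfield of some finite extension $K'/\mbb{Q}^{\mrm{cyc}}$, so $L=K\cdot\mbb{Q}^{\mrm{cyc}}\subset K'$ and hence $L$ is itself a finite extension of $\mbb{Q}^{\mrm{cyc}}$; consequently any finite $L'/L$ is also a finite extension of $\mbb{Q}^{\mrm{cyc}}$, and the vanishing $V^{G_{L'}}=0$ is then exactly Proposition 3.1 of \cite{RS}.

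With both hypotheses in place, Proposition \ref{Thm.A} yields $V^{G_M}=0$, which is the desired statement. I do not anticipate any genuine obstacle: the theorem is morally the characteristic-zero half of Theorem \ref{MT:tor2} stripped of the Kummer-by-$q^\infty$ thickening at large primes $q$, and so reduces at once to the machinery of Section 2.2. The only small item to be careful about is the comparison $K(K^{1/\infty})=K^{\mrm{cyc}}(K^{1/\infty})$, which is trivial but must be noted in order to recognize the present $M$ as the $M$ of Proposition \ref{Thm.A}.
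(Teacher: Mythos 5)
Your proposal is correct and follows exactly the paper's own route: the paper proves Theorem \ref{MT:ptor} by the same argument as assertion (A) in the proof of Theorem \ref{MT:tor2}, namely setting $L:=K^{\mathrm{cyc}}$ (a finite extension of $\mathbb{Q}^{\mathrm{cyc}}$), invoking Proposition 3.1 of \cite{RS} to get $V^{G_{L'}}=0$ for all finite $L'/L$, and applying Proposition \ref{Thm.A} to $M_{\infty}=K(K^{1/\infty})$. Your verifications (that $K(K^{1/\infty})=K^{\mathrm{cyc}}(K^{1/\infty})$ and that uniformly bounded ramification gives openness of $\chi_{\ell}(G_K)$) are exactly the points the paper relies on.
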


\begin{remark}
The condition that ``$K$ is  of
 uniformly bounded ramification  indexes"
cannot be simply removed from the statement of Theorem \ref{MT:ptor}.

For example, we consider the case where $K=\mbb{Q}(\mu_{p^{\infty}}\mid p\in \mfrak{Primes})=\mbb{Q}^{\mrm{cyc}}$
and set $M:=K(K^{1/\infty})\, (=\mbb{Q}((\mbb{Q}^{\mrm{cyc}})^{1/\infty}))$.
Put $K_0=\mbb{Q}(\sqrt{-1})$, which is a subfield of $K$.
The elliptic curve $E\colon y^2=x^3+x$ has $j$-invariant $1728$ and 
$\mrm{End}_{\mbb{C}}(E)$ is isomorphic to the integer ring $\mbb{Z}[\sqrt{-1}]$ of $K_0$.
As is explained in \cite[Example 5.8]{Si},
we know that $K_0^{\mrm{ab}}$ coincides with the field  $K_0(E_{\mrm{tor}})$
generated by $K_0$ and all the torsion points of $E$. 
On the other hand, since any finite abelian extension of $K_0^{\mrm{cyc}} (=\mbb{Q}^{\mrm{cyc}})$ 
is contained in $K_0^{\mrm{cyc}}((K_0^{\mrm{cyc}})^{1/n})$ for some $n>1$ by Kummer theory,
we know that $K_0^{\mrm{ab}}\subset K_0^{\mrm{cyc}}((K_0^{\mrm{cyc}})^{1/\infty})=M$.
Therefore, we find that $E(M)[\ell^{\infty}]$ is infinite for any prime $\ell$.
This is equivalent to say that 
$H^1_{\aet}(E_{\overline{K}},\mbb{Q}_{\ell}(1))^{G_{M}}$, 
which is isomorphic to the $G_{M}$-fixed part of 
the $\ell$-adic rational Tate module $V_{\ell}(E)$ of $E$, is not zero
for any prime $\ell$.
\end{remark}

As a final topic in this section,
we study abelian extensions of $\mbb{Q}$ which are of uniformly bounded ramification indexes.
For any integer $d>0$,
we denote by $\mbb{Q}(d)_{\mrm{ab}}$ 
the composite of all abelian extensions of $\mbb{Q}$ of degree $\le d$.
In addition, for any prime $p$, we denote by $\mbb{Q}_p(d)_{\mrm{ab}}$ 
the composite field  of all abelian extensions of $\mbb{Q}_p$ of degree $\le d$.

\begin{proposition}
\label{unif}
{\rm (1)}  $\mbb{Q}(d)_{\mrm{ab}}$ is of uniformly bounded ramification indexes.

\noindent
{\rm (2)} 
If $K$ is an abelian extension of $\mbb{Q}$,
then $K$ is of uniformly bounded ramification indexes
if and only if $K$ is a subfield of $\mbb{Q}(d)_{\mrm{ab}}$  for some $d$.
\end{proposition}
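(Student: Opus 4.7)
My plan is to prove (1) by a uniform local class field theory computation, and then deduce (2) by combining Kronecker--Weber with a structural fact about profinite abelian groups generated by elements of bounded order.

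For (1), I would first note that if $L/\mbb{Q}$ is abelian of degree $\le d$ and $\mfrak{p}\mid p$ is a prime of $L$, then $L_{\mfrak{p}}/\mbb{Q}_p$ is again abelian of degree $\le d$. Consequently the completion at any place above $p$ of any finite subextension of $\mbb{Q}(d)_{\mrm{ab}}$ lies inside the compositum $\mbb{Q}_p(d)_{\mrm{ab}}$ of all abelian extensions of $\mbb{Q}_p$ of degree $\le d$, so it suffices to bound $e(\mbb{Q}_p(d)_{\mrm{ab}}/\mbb{Q}_p)$ uniformly in $p$. By local class field theory, $\mbb{Q}_p(d)_{\mrm{ab}}$ corresponds to the intersection of all open subgroups of $\mbb{Q}_p^{\times}$ of index $\le d$; since any such subgroup contains $(\mbb{Q}_p^{\times})^{d!}$, we obtain
$$
e(\mbb{Q}_p(d)_{\mrm{ab}}/\mbb{Q}_p) \le [\mbb{Z}_p^{\times} : (\mbb{Z}_p^{\times})^{d!}].
$$
A direct computation using the structure $\mbb{Z}_p^{\times}\cong \mbb{Z}/(p-1)\oplus \mbb{Z}_p$ (for odd $p$) shows that this index equals $\gcd(d!,p-1)\cdot p^{v_p(d!)}$, which is at most $d!$ for $p>d$ and takes only finitely many values for $p\le d$, giving the desired uniform bound.

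For (2), the direction $(\Leftarrow)$ is immediate from (1). For $(\Rightarrow)$, assume $K/\mbb{Q}$ is abelian with all ramification indexes bounded by some $\nu$. By Kronecker--Weber, $K\subset \mbb{Q}^{\mrm{cyc}}$, and the canonical isomorphism $\mrm{Gal}(\mbb{Q}^{\mrm{cyc}}/\mbb{Q})\cong \prod_{p}\mbb{Z}_p^{\times}$ identifies the $p$-th factor with the inertia subgroup at $p$. In particular, the inertia subgroups topologically generate the cyclotomic Galois group, hence their images---each of order $e_p\le \nu$---topologically generate $\mrm{Gal}(K/\mbb{Q})$. An abelian profinite group topologically generated by elements of order $\le \nu$ has exponent dividing $L:=\mrm{lcm}(1,2,\ldots,\nu)$, so every finite quotient of $\mrm{Gal}(K/\mbb{Q})$ decomposes as a direct sum of cyclic groups of order dividing $L$. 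Therefore every finite subextension of $K/\mbb{Q}$ is a compositum of cyclic extensions of degree $\le L$, and passing to the union yields $K\subset \mbb{Q}(L)_{\mrm{ab}}$.

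The main conceptual input for $(\Rightarrow)$ is that $\mbb{Q}$ admits no nontrivial everywhere-unramified abelian extensions, which is precisely what forces inertia subgroups to topologically generate the full abelian Galois group. Apart from this, both parts reduce to bookkeeping with local class field theory and the structure of $\mbb{Z}_p^{\times}$; I expect the main technical subtlety to lie in (1), namely in verifying that the wild contribution $p^{v_p(d!)}$ to the local index remains uniformly bounded---which is fine because it can be nontrivial only at the finitely many primes $p\le d$.
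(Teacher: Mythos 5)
Your proof is correct. For (1) you take essentially the paper's route: reduce to the local compositum $\mbb{Q}_p(d)_{\mrm{ab}}$ and invoke local class field theory; the paper bounds the full degree $[\mbb{Q}_p(d)_{\mrm{ab}}:\mbb{Q}_p]$ by counting the index-$\le d$ subgroups of $\mbb{Q}_p^{\times}/(\mbb{Q}_p^{\times})^{d!}$, while you bound the inertia directly by $[\mbb{Z}_p^{\times}:(\mbb{Z}_p^{\times})^{d!}]=\gcd(d!,p-1)\cdot p^{v_p(d!)}$, which is a little more explicit but the same idea. The ``only if'' half of (2) is where you genuinely diverge. The paper argues one generator at a time: for $\alpha\in K$ it embeds $\mbb{Q}(\alpha)$ in the maximal abelian extension $\mbb{Q}_S$ of $\mbb{Q}$ unramified outside the set $S$ of primes ramified in $\mbb{Q}(\alpha)$, uses the decomposition $\mrm{Gal}(\mbb{Q}_S/\mbb{Q})\cong\prod_{p\in S}\mbb{Z}_p^{\times}$ into inertia factors to produce, for each $q\in S$, a subfield $M_q$ with $[M_q:\mbb{Q}]$ equal to the ramification index of $\mbb{Q}(\alpha)$ at $q$, and places $\mbb{Q}(\alpha)$ inside the compositum of the $M_q$, hence inside $\mbb{Q}(d)_{\mrm{ab}}$ with $d$ the lcm of the ramification indexes that actually occur. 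You instead work globally with $\mrm{Gal}(K/\mbb{Q})$: the inertia subgroups topologically generate $\mrm{Gal}(\mbb{Q}^{\mrm{cyc}}/\mbb{Q})$, so $\mrm{Gal}(K/\mbb{Q})$ is topologically generated by elements of order $\le\nu$, hence has exponent dividing $L=\mrm{lcm}(1,\dots,\nu)$, and every finite quotient splits into cyclic factors of order $\le L$. Both arguments hinge on the same structural fact (there is no nontrivial everywhere-unramified abelian extension of $\mbb{Q}$, i.e.\ inertia generates); the paper's version yields the slightly sharper constant (its $d$ divides your $L$) and an explicit field-theoretic decomposition, while yours is shorter and cleanly isolates the group-theoretic content as a bounded-exponent lemma for abelian profinite groups. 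The one step worth spelling out in your write-up is that the image of the $p$-th inertia factor in the quotient $\mrm{Gal}(K/\mbb{Q})$ is the inertia subgroup of $K/\mbb{Q}$ at $p$ and therefore is a finite subgroup of order exactly the ramification index $e_p\le\nu$, all of whose elements have order dividing $e_p$; this is the standard compatibility of inertia with quotients and is what makes your ``topologically generated by elements of order $\le\nu$'' claim precise.
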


\begin{proof}
(1) 
It suffices to show that
there exists a constant $\delta_d$ which depends only on $d$ 
such that, for any prime $p$, we have  
$[\mbb{Q}_p(d)_{\mrm{ab}}:\mbb{Q}_p]\le \delta_d$
(In fact, for any embedding $\iota \colon \overline{\mbb{Q}}\hookrightarrow \overline{\mbb{Q}}_p$,
the composite field of $\iota(\mbb{Q}(d)_{\mrm{ab}})$ and $\mbb{Q}_p$ is contained in $\mbb{Q}_p(d)_{\mrm{ab}}$.
Remark that this also implies that the absolute ramification index of any finite place of $\mbb{Q}(d)_{\mrm{ab}}$ above $p$
is a divisor of that of $\mbb{Q}_p(d)_{\mrm{ab}}$). 
By local class field theory, 
there exists a bijection between 
the set of abelian extensions of $\mbb{Q}_p$ of degree $\le d$ 
and the set of index $\le d$ subgroups of $\mbb{Q}^{\times}_p/(\mbb{Q}^{\times}_p)^{d!}$.
It is easy to check that there exists a constant $c_d$, depending only on $d$,
such that the cardinality of the latter set is less than $c_d$. 
Therefore, setting $\delta_d:=d^{c_d}$,
we obtain the desired result.

(2) By (1),  it suffices to show ``only if" part.
Let $d$ be the least common multiple of 
the absolute ramification indexes of all finite places of $K$, 
which is finite by assumption on $K$.
The goal is to show that $K$ is a subfield of $\mbb{Q}(d)_{\mrm{ab}}$.
Take any $\alpha\in K$.
Denote by $S$ the set of rational primes which are ramified in $\mbb{Q}(\alpha)$
and  also denote by $\mbb{Q}_S$ the maximal abelian extension of $\mbb{Q}$
unramified outside $S$.
We have a natural isomorphism 
$$
\mrm{Gal}(\mbb{Q}_S/\mbb{Q})\overset{\sim}\longrightarrow
\prod_{p\in S} \mbb{Z}_p^{\times}
$$
via cyclotomic characters;
 we identify $\mrm{Gal}(\mbb{Q}_S/\mbb{Q})$ with 
$\prod_{p\in S} \mbb{Z}_p^{\times}$.
Let $F_p$ be the maximal unramified subextension of $\mbb{Q}_S/\mbb{Q}$ 
at $p\in S$ and put $I_p=\mrm{Gal}(\mbb{Q}_S/F_p)$. 
Note that $I_p=\mbb{Z}_p^{\times}$
and these are the inertia subgroup of $\mrm{Gal}(\mbb{Q}_S/\mbb{Q})$ at $p$. 
Put $I_p'=\mrm{Gal}(\mbb{Q}_S/F_p(\alpha))$ for each $p\in S$.
We define the subextension $M$  of $\mbb{Q}_S/\mbb{Q}$ 
by $\mrm{Gal}(\mbb{Q}_S/M)=\prod_{p\in S} I_p'$,
and also define $M_{q}$ the subextension of $\mbb{Q}_S/\mbb{Q}$ 
by $\mrm{Gal}(\mbb{Q}_S/M_q)=I_q'\times\prod_{p\in S, p\not=q} I_p$
for any $q\in S$.
Since $\mrm{Gal}(M_q/\mbb{Q})$ is isomorphic to $I_q/I_q'$, 
we see $[M_q:\mbb{Q}]=[F_q(\alpha):F_q]$ and these are equal to the absolute ramification index of 
$\mbb{Q}(\alpha)$ at $q\in S$, which is a divisor of $d$.
Since $M$ is the composite of all $M_q$ for $q\in S$, 
we have $\mbb{Q}(\alpha)\subset M\subset \mbb{Q}(d)_{\mrm{ab}}$.
Therefore, we obtain $K\subset \mbb{Q}(d)_{\mrm{ab}}$ as desired.
\end{proof}

\begin{example}
\label{comp}
Let $K=\mbb{Q}(d)_{\mrm{ab}}$.
It follows from the Kronecker-Weber theorem that 
$K$ is a subfield of $\mbb{Q}^{\mrm{cyc}}$, and hence 
$K$ satisfies the required conditions in Theorem  \ref{MT:tor2}. 
If we denote by $e_p(d)$ the absolute ramification index of $\mbb{Q}_p(d)_{\mrm{ab}}$ for any prime $p$, 
then the absolute ramification index of any finite place of $K$ above $p$
is a divisor of $e_p(d)$. 
Hence one can verify that a constant $\mu$ in Theorem  \ref{MT:tor2} (2-1) 
can be chosen as 
$$
\mu=e(d):=\mrm{lcm}\{e_p(d)\}_{p\in \mfrak{Primes},p\not=2}.
$$
The constant $e(d)$ is finite since $K$ is of uniformly bounded ramification indexes,
and $e(d)=1$ if and only if $d=1$.
For example, if $d=2$ (thus $K=\mbb{Q}(\sqrt{m}\mid m\in \mbb{Z})$), 
then we have  $e(2)=2$ since $\mbb{Q}_p(2)_{\mrm{ab}}=\mbb{Q}_{p^2}(\sqrt{p})$ 
for any odd prime $p$. Here, $\mbb{Q}_{p^2}$ is the quadratic unramified extension of $\mbb{Q}_p$.
\end{example}

\if0
We say that 
an abelian variety $A$ over a number field $K$ has complex multiplication over $\overline{K}$
if $\mrm{End}_{\overline{K}}(A)\otimes_{\mbb{Z}}\mbb{Q}$ contains a number field of degree $2\dim A$.
\begin{corollary}
\label{MT:Zarhin}
Let $g>0$ be an integer, $K$ a number field and 
$\Delta$ a finitely generated subgroup of $K^{\times}$.
We denote by $p_0$ the maximal prime ramified 
in the maximal abelian extension $K_0$ of 
$\mbb{Q}$ contained in $K$ (we set $p_0:=1$ if $K_0=\mbb{Q}$).
We set 
$$
M:=K^{\mrm{ab}}(\Delta^{1/p^{\infty}}, K^{1/{q}^{\infty}}
\mid p,q\in \mfrak{Primes}, p\le \mrm{max}\{2g+1, p_0\}<q).
$$ 
Let $A$ be an abelian variety over $K$ of dimension $\le g$
without complex multiplication over $\overline{K}$. 
If $A$ is  $\overline{K}$-simple, then the torsion subgroup $A(M)_{\mrm{tor}}$ 
of $A(M)$ is finite.
\end{corollary}

\begin{proof}
(Essentially the same proof as those of Theorems \ref{MT:tor} or \ref{AVfinite} proceeds; 
we use   the result of \cite{Za} instead of the result of Ribet \cite{KL}.)
It suffices to prove that 
$A(M)[\ell^{\infty}]$ is finite
for every prime $\ell$
and  $A(M)[\ell]$ is zero
for all but finitely many primes $\ell$. 
The former follows from the main theorem of \cite{Za} and  Proposition \ref{Thm.A},
and the latter follows from the main theorem of {\it loc.\ cit.}, 
Proposition \ref{Thm.B} and Remark \ref{Thm.B:Rem}. 
\end{proof}
\fi

\section{TKND-AVKF fields}

In this section, we study TKND-AVKF property for some fields
and we show Theorems  \ref{MT:AVKF} and \ref{Kab:TKND-AVKF}
in the Introduction.
Following \cite[Definition 3.3]{Tsu1} and \cite[Definition 6.6]{HMT},
we first recall some notions.

\begin{definition}
\label{anabelianDEF}
Let $F$ be a field of characteristic $0$ and $p$ a prime.
We denote by $F^{\times p^{\infty}}$ and $F^{\times \infty}$
the set of $p$-divisible elements of $F^\times$ and
the set of divisible elements of $F^\times$, respectively;
$F^{\times p^{\infty}}=\bigcap_{n\ge 1} (F^{\times})^{p^n}$
and 
$F^{\times \infty}=\bigcap_{n\ge 1} (F^{\times})^{n}$.
We denote by $F_{\mrm{div}}$ the field obtained by adjoining to $\mbb{Q}$
the divisible elements of the multiplicative groups of 
finite extension fields of $F$, that is, 
$F_{\mrm{div}}:=\bigcup_{E} \mbb{Q}(E^{\times \infty})$
where $E(\subset\overline{F})$ ranges all finite extensions of $F$.

\noindent
(1) We say that $F$ is {\it TKND} (=``{\it torally Kummer-nondegenerate}") 
if the extension $\overline{F}/F_{\mrm{div}}$ is of infinite degree.

\noindent
(2) We say that $F$ is {\it AVKF} (=``{\it abelian variety Kummer-faithful}") if,
for every finite extension $E$ of $F$ and every abelian variety 
$A$ over $E$, the set of divisible elements of $A(E)$ is zero, that is, 
$$
\bigcap_{n\ge 1} nA(E)=0.
$$ 

\noindent
(3) We say that $F$ is {\it TKND-AVKF} if $F$ is both TKND and AVKF.

\noindent
(4) We say that $F$ is  {\it $\times \mu$-indivisible}
(resp.\ {\it $p$-$\times \mu$-indivisible}) 
if  $F^{\times \infty}\subset \mu_{\infty}(F)$
(resp.\  $F^{\times p^{\infty}}\subset \mu_{\infty}(F)$).
We say that $F$ is {\it stably $\times \mu$-indivisible} (resp.\ {\it stably $p$-$\times \mu$-indivisible})
if any finite extension field of $F$  is $\times \mu$-indivisible (resp.\ $p$-$\times \mu$-indivisible).

\noindent
(5) We say that $F$ is  {\it $\mu_{p^{\infty}}$-finite}  
if  $\mu_{p^{\infty}}(F)$ is finite.
We say that $F$ is {\it stably $\mu_{p^{\infty}}$-finite} 
if any finite extension field of $F$  is $\mu_{p^{\infty}}$-finite.
\end{definition}

\begin{remark}
By considering the Weil restrictions of abelian varieties over a field $F$,
one verifies immediately that 
$F$ is AVKF 
if and only if the set of divisible elements of $A(F)$ is zero 
for every abelian variety $A$ over $F$. 
\end{remark}

For various properties of the above notions, 
it will be helpful to the readers to refer \cite[\S 6]{HMT}.
It should be remarked that the following implications hold:
\[
\xymatrix{
\mbox{stably $p$-$\times \mu$-indivisible} \ar@{=>}[r] \ar@{=>}[d] 
& \mbox{stably $\times \mu$-indivisible}  \ar@{=>}[r] \ar@{=>}[d] 
& \mbox{TKND}   
\\ 
\mbox{$p$-$\times \mu$-indivisible} \ar@{=>}[r] 
& \mbox{$\times \mu$-indivisible}  
& 
}
\]

\subsection{A criterion of TKND property}

We study some criterion on TKND property.

\begin{proposition}
\label{TKND}
Let $K$ be a field of characteristic $0$ 
containing $\mu_\infty(\overline{K})$
and $L$  a Galois extension of $K$.
Assume that $K$ is stably $p$-$\times \mu$-indivisible
for some prime $p$.

\noindent
{\rm (1)} We have $L_{\mrm{div}}\subset L$.

\noindent
{\rm (2)} Let $M$ be a subfield of $L$ such that $L$ is algebraic over $M$.
If $L\not=\overline{K}$, then $M$ is TKND.
\end{proposition}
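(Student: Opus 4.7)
The plan is to first establish (1), which contains the main technical content, and then derive (2) from it via a straightforward Galois-disjointness argument. For (1), I would take a finite extension $E/L$ and $\alpha \in E^{\times\infty}$, aiming to show $\alpha \in L$; since $E^{\times\infty} \subset E^{\times p^{\infty}}$, it suffices to treat $\alpha \in E^{\times p^{\infty}}$. The inclusions $\mu_{p^{\infty}}(\overline{K}) \subset K \subset L \subset E$ guarantee that all $p^n$-th roots of $\alpha$ lie in $E$, so $L(\alpha^{1/p^{\infty}}) \subset E$ is a finite extension of $L$. Applying Kummer theory over $L(\alpha)$ (which contains $\mu_{p^{\infty}}$), the pro-$p$ cyclic extension $L(\alpha^{1/p^{\infty}})/L(\alpha)$ is finite, hence trivial, yielding $\alpha \in L(\alpha)^{\times p^{\infty}}$: that is, $\alpha^{1/p^n} \in L(\alpha)$ for every $n$.

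The heart of the proof is to descend this ``$L(\alpha)$-divisibility'' to a statement controlled by the stable $p$-$\times\mu$-indivisibility hypothesis on $K$. Using that $L/K$ is Galois (so $\sigma(L) = L$ for all $\sigma \in G_K$), I would propagate the relation $\alpha^{1/p^n} \in L(\alpha)$ to every $K$-conjugate of $\alpha$: $\alpha_i^{1/p^n} \in L(\alpha_i)$ for $i = 1, \ldots, r$. Taking $F' \subset \overline{K}$ to be the Galois closure of $K(\alpha)/K$ (a finite Galois extension of $K$), set $\widetilde{F'} := F'(\alpha_1^{1/p^{\infty}}, \ldots, \alpha_r^{1/p^{\infty}})$, a Kummer pro-$p$ abelian extension of $F'$ that sits inside a finite extension $\widetilde{E}$ of $L$ (obtained by composing the $G_K$-conjugates of $E$). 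Once $[\widetilde{F'}:F']$ is known to be finite, each Kummer subtower $F'(\alpha_i^{1/p^{\infty}})/F'$ collapses, forcing $\alpha_i \in F'^{\times p^{\infty}} \subset \mu_{\infty}(F') \subset K \subset L$ by the stability hypothesis applied to $F'/K$; in particular $\alpha = \alpha_1 \in L$. The main obstacle is establishing $[\widetilde{F'}:F'] < \infty$: although $\widetilde{F'} \cdot L/L$ is visibly finite (being contained in $\widetilde{E}/L$), the tower $F' \subset F' L \cap \widetilde{F'} \subset \widetilde{F'}$ might a priori have infinite bottom. I would exploit the pro-$p$ abelian Kummer structure of $\mrm{Gal}(\widetilde{F'}/F')$ together with a careful comparison of Kummer pairings under restriction to $F' L$ to rule out an infinite pro-$p$ abelian piece of $\widetilde{F'}/F'$ being absorbed by $\mrm{Gal}(L/L \cap F')$ within a bounded extension.

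For (2): since $L$ contains $\sqrt{-1} \in \mu_{\infty}(\overline{K})$, the Artin--Schreier theorem precludes $[\overline{K}:L]$ from being finite and nontrivial, so $L \neq \overline{K}$ forces $[\overline{K}:L] = \infty$. As $L/M$ is algebraic, $\overline{M} = \overline{K}$. The remaining task is to show $M_{\mrm{div}} \subset L$, from which $[\overline{M}:M_{\mrm{div}}] \geq [\overline{K}:L] = \infty$ follows and $M$ is TKND. For a finite extension $E/M$, the compositum $EL$ is finite over $L$ (since $L/M$ is algebraic and $E/M$ finite), and any $\alpha \in E^{\times\infty}$ lies in $(EL)^{\times\infty}$, so (1) applies to give $\alpha \in L$ and hence $\mbb{Q}(E^{\times\infty}) \subset L$.
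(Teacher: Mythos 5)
Your part (2) is fine and agrees with the paper's argument, but your plan for part (1) --- which is where all the content lies --- breaks down at two points, and the second is not a gap you could hope to fill. First, the opening reduction ``since $E^{\times\infty}\subset E^{\times p^{\infty}}$, it suffices to treat $\alpha\in E^{\times p^{\infty}}$'' replaces the statement to be proved by a strictly stronger one, and that stronger statement is false. Take $K=\mathbb{Q}^{\mathrm{cyc}}$, $p=3$, $L=K\bigl(2^{1/(2\cdot 3^n)}\mid n\ge 0\bigr)$ (a $\mathbb{Z}_3$-Kummer extension of $K$, hence Galois), and $E=L(2^{1/4})$, a quadratic extension of $L$. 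Then $2^{1/4}\in E^{\times 3^{\infty}}$ (because $a\cdot 3^n+2b=1$ is solvable in integers, so $2^{1/(4\cdot 3^n)}$ lies in the group generated by $2^{1/4}$ and $2^{1/(2\cdot 3^n)}$ modulo roots of unity), yet $2^{1/4}\notin L$ since $\mathrm{Gal}(L/K)\simeq\mathbb{Z}_3$ has no quotient of order $2$. Full divisibility is genuinely needed: in the paper's proof the descent only yields $\alpha^{d}\in K_2\subset L$ with $d=[K_1:K_1\cap L]$, and one must then divide by $d$ inside $L^{\times\infty}$ to recover $\alpha$ itself.

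Second, the step you yourself flag as the main obstacle --- that $[\widetilde{F'}:F']<\infty$ --- is false, not merely unproved. Already for $\alpha=2\in L^{\times\infty}$ with $L=K(2^{1/\infty})$ and $K=\mathbb{Q}^{\mathrm{cyc}}$ one has $F'=K$ and $\widetilde{F'}=K(2^{1/p^{\infty}})$, of infinite degree over $F'$; your plan would then conclude $\alpha\in\mu_{\infty}(F')$, which is absurd. The phenomenon you propose to ``rule out'' --- an infinite pro-$p$ Kummer tower over $F'$ being absorbed into $L$ --- is exactly what happens whenever a non-root-of-unity of $K$ becomes divisible in $L$, and the conclusion of the proposition is only $\alpha\in L$, never $\alpha\in\mu_{\infty}$. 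The paper instead passes to $K_2:=K_1\cap L$, uses the inflation--restriction sequence for $H^1(-,\mathbb{Z}_p)$ to place the Kummer class $f_\alpha$ in $(\hat{K_2^{\times}})_p$ and to show it is $\mathrm{Gal}(K_1/K_2)$-invariant in $K_1^{\times}/\mu_{\infty}$, deduces $\alpha^{[K_1:K_2]}\in K_2^{\times}\cdot\mu_{\infty}\subset L$, and only then invokes the full divisibility of $\alpha$ to extract the $[K_1:K_2]$-th root inside $L$. Some version of this ``take a power, then divide back'' structure is unavoidable, and your outline leaves no room for it.
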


\begin{proof}
Admitting (1),  we show (2). 
We have $M_{\mrm{div}}\subset L_{\mrm{div}}\subset L
\subset \overline{K}=\overline{M}$. 
By $L\not=\overline{K}$ and the Artin-Schreier theorem,
we obtain that the extension $\overline{K}/L$ is of infinite degree.
Hence the extension $\overline{M}/M_{\mrm{div}}$ is also of infinite degree, 
which implies that $M$ is TKND.
Thus it suffices to show (1). 
Let $E$ be a finite extension of $L$, and $\alpha\in E^{\times\infty}$.
It suffices to show that $\alpha\in L$.
By replacing $E$ by the Galois closure of $E/K$ 
(which is finite over $L$), we assume that $E$ is Galois over $K$.
Let $K_1$ be the Galois closure of $K(\alpha)/K$.
Fix a prime  $p$  such that $K$ is stably $p$-$\times \mu$-indivisible. 
In this proof, we set $\mu_{\infty}:=\mu_{\infty}(\overline{K})$ to simplify notation.

It follows from the assumption on $K$ that $K_1$ is $p$-$\times\mu$-indivisible.
Therefore, if $\alpha\in K_1^{\times p^\infty}$, then $\alpha\in\mu_\infty\subset L$.
In the following, we assume that $\alpha\not\in K_1^{\times p^\infty}$.
Set $M_1:=K_1(\alpha^{1/p^{\infty}})$ (so, $\mathrm{Gal}(M_1/K_1)\simeq\mathbb{Z}_p$).
Since $\mathrm{Gal}(E/K_1L)$ is finite, we have $M_1\subset K_1L$.
Let $M_2$ be the field corresponding to $M_1$ 
under the natural isomorphism 
$\mathrm{Gal}(K_1L/K_1)\simeq\mathrm{Gal}(L/K_1\cap L)$, 
and set $K_2:=K_1\cap L$.
Then we have the following commutative diagram:
\[
\xymatrix{
0 \ar[r] & H^1(\mathrm{Gal}(M_1/K_1), \mathbb{Z}_p)  \ar[r] 
& H^1(G_{K_1},\,\mathbb{Z}_p)=:(\hat{K_1^\times})_p \ar[r] 
& H^1(G_{M_1}, \mathbb{Z}_p)=:(\hat{M_1^\times})_p  \\ 0 \ar[r] 
&  H^1(\mathrm{Gal}(M_2/K_2), \mathbb{Z}_p) \ar[r] \ar@{^{(}-_>}^\simeq[u] 
& H^1(G_{K_2},\,\mathbb{Z}_p)=:(\hat{K_2^\times})_p \ar@{^{(}-_>}[u] \ar[r] 
& H^1(G_{M_2},\,\mathbb{Z}_p)=:(\hat{M_2^\times})_p, \ar@{^{(}-_>}[u]}
\]
where the horizontal sequences are exact.
Note that the middle (resp. right) vertical arrow 
is injective since $K_1$ (resp. $M_1$) is finite over $K_2$ (resp. $M_2$).
Note also that $\mu_\infty\subset K_1$, 
and hence $(\hat{K_1^\times})_p\simeq\displaystyle\varprojlim_{n}K_1^\times/(K_1^\times)^{p^n}$, and so on.
Let $f_\alpha$ be the (nontrivial) element of $(\hat{K_1^\times})_p$ determined by $\alpha$.
Since the image of $f_\alpha$ in $(\hat{M_1^\times})_p$ is $1$, it holds that $f_\alpha\in(\hat{K_2^\times})_p\subset(\hat{K_1^\times})_p$.

On the other hand, since $K_1$ and $K_2$ are $p$-$\times\mu$-indivisible, 
we have the following commutative diagram:
\[\xymatrix{ 
0 \ar[r] & \mu_\infty \ar@{=}[d] \ar[r] 
& K_2^\times \ar[r] \ar@{^{(}-_>}[d] 
& (\hat{K_2^\times})_p \ar@{^{(}-_>}[d]  \\ 0 \ar[r] 
& \mu_\infty \ar[r] & K_1^\times \ar[r] 
& (\hat{K_1^\times})_p,}\]
where the horizontal sequences are exact.
Therefore, we have the following inclusions of $\mathrm{Gal}(K_1/K_2)$-modules:
\[\xymatrix{
K_2^\times/\mu_\infty \ar@{^{(}-_>}[d] \ar@{^{(}-_>}[r] 
& (\hat{K_2^\times})_p \ar@{^{(}-_>}[d] \\ K_1^\times/\mu_\infty \ar@{^{(}-_>}[r] 
& (\hat{K_1^\times})_p.
}
\]
Moreover, by considering the long exact sequence associated to
$
0 \to  \mu_\infty \to K_1^{\times} \to K_1^{\times}/\mu_{\infty} \to  0,
$
we obtain
$$
0 \to K_2^\times/\mu_\infty \to (K_1^\times/\mu_\infty)^{\mathrm{Gal}(K_1/K_2)} 
\to  H^1(\mathrm{Gal}(K_1/K_2), \mu_\infty)
=\mathrm{Hom}(\mathrm{Gal}(K_1/K_2), \mathbb{Q}/\mathbb{Z}).
$$
Now, $f_\alpha\in K_1^\times/\mu_\infty\subset(\hat{K_1^\times})_p$ 
since the natural morphism $K_1^\times\to (\hat{K_1^\times})_p$ 
factors through $K_1^\times/\mu_\infty$, 
and $f_\alpha\in(\hat{K_2^\times})_p\subset(\hat{K_1^\times})_p^{\mathrm{Gal}(K_2/K_1)}$.
So, $f_\alpha\in(K_1^\times/\mu_\infty)^{\mathrm{Gal}(K_1/K_2)}$, and hence $f_\alpha^d\in K_2^\times/\mu_\infty$, where $d:=[K_1:K_2]$.
This shows that there exists $\zeta\in\mu_\infty$ satisfying $\zeta\alpha^d\in K_2^\times$.
However, since $\mu_\infty\subset K_2$, 
$\alpha^d$ is an element of $K_2\subset L$.
Consider the following commutative diagram:
\[\xymatrix
{ 0 \ar[r] & L^{\times\infty} \ar@{^{(}-_>}[d] \ar[r] 
& L^\times \ar[r] \ar@{^{(}-_>}[d] 
& \hat{L^\times}:=\varprojlim_{n}L^\times/(L^\times)^n=H^1(G_L, \hat{\mathbb{Z}}) \ar@{^{(}-_>}[d]  \\ 0 \ar[r] 
& E^{\times\infty} \ar[r] & E^\times \ar[r]
& \hat{E^\times}:=\varprojlim_{n}E^\times/(E^\times)^n=H^1(G_E, \hat{\mathbb{Z}}),}
\]
where the horizontal sequences are exact.
(The injectivity of right vertical arrow follows from 
the  condition that $E/L$ is a finite extension and $\mu_{\infty}\subset L$.)
Since $\alpha\in E^{\times\infty}$, the image of $\alpha$, 
hence also the image of $\alpha^d$, in $\hat{E^\times}$ is $1$.
Therefore, we have $\alpha^d\in L^{\times\infty}$.
As $\mu_\infty\subset L$, this shows that $\alpha\in L$, as desired.
\end{proof}

We say that a field $k$ is {\it sub-$p$-adic} if 
it is a subfield of a finitely generated extension of $\mbb{Q}_p$.
More generally, 
we say that a field $k$ is {\it generalized sub-$p$-adic} if 
it is a subfield of a finitely generated extension  of $\hat{\mbb{Q}}^{\mrm{ur}}_p$,
where $\hat{\mbb{Q}}^{\mrm{ur}}_p$ is the completion of the maximal unramified extension field of $\mbb{Q}_p$.

\begin{example}\label{st-st}
(1) Let $k$ be a field of characteristic $0$ and $K:=k^{\mrm{cyc}}$.
Suppose that $k$ is both stably $\mu_{p^\infty}$-finite and stably $p$-$\times\mu$-indivisible for a prime $p$ (e.g., $k$ is generalized sub-$p$-adic (cf. Lemma D (iii) of \cite{Tsu1})).
Then, by Lemma D (iv) of \cite{Tsu1}, $K$ is stably $p$-$\times\mu$-indivisible.
In particular, $K$ is stably $p$-$\times\mu$-indivisible for every prime $p$ if $k$ is a number field.

(2)
Let $d>0$ be an integer and denote by $\mbb{Q}(d)$ 
the composite of all number fields of degree $\le d$.
Then, any subfield $k$ of $\mbb{Q}(d)$ is both stably $\mu_{p^{\infty}}$-finite and stably $p$-$\times \mu$-indivisible
for every prime $p$, and hence $K$ is stably $p$-$\times \mu$-indivisible
for every prime $p$.
In fact, for any prime $p$, if we fix an embedding 
$\overline{\mbb{Q}}\hookrightarrow \overline{\mbb{Q}}_p$,
then $k\mbb{Q}_p$ is a finite extension of $\mbb{Q}_p$.
In particular, $k$ is sub-$p$-adic.
\end{example}

%

\begin{example}
(1) Let $F$ be a number field.
If $G_F$ has a nontrivial finite subgroup $H$ (e.g., $F=\mathbb{Q}$), the fixed field $\overline{F}^H$ of $H$ is real closed (note that $H$ is necessarily of order $2$).
Therefore, as in \cite[Remark 1.1.3]{Tsu2}, $\overline{F}^H$ is not TKND.

\vspace{2mm}

(2) Let $K$ be a complete discrete valuation field whose residue characteristic is $0$.
Then $K$ is not TKND (cf. \cite[Remark 1.4.2]{Tsu3}).
Indeed, there exist a subfield $k$ of the ring of integers of $K$ which is isomorphic to the residue field of $K$ 
(via the natural surjection from the ring of integers to the residue field),
 and a uniformizer $t$ of $K$ such that $K=k(\!(t)\!)$.
Let $\overline{K}$ be an algebraic closure of $K$ and $\overline{k}$ the algebraic closure of $k$ in $\overline{K}$.
To verify the above assertion, it suffices to verify the following:
\begin{itemize}
\item[(a)] For any positive integer $n$ and any $n$-th root $t_n\in\overline{K}$ of $t$, $t_n$ belongs to $K_{\mrm{div}}$.

\item[(b)] For any $\alpha\in\overline{k}$, $\alpha$ belongs to $K_{\mrm{div}}$.
\end{itemize}
Assertion (a) follows immediately from the fact that $1+t_n$ is divisible in $K(t_n)^\times$.
Assertion (b) follows immediately from (a) and the fact that $1+\alpha t$ is divisible in $K(\alpha)^\times$.
\end{example}

\begin{example}\label{wild}
In Proposition \ref{TKND}, the condition that $L$ is Galois over $K$ is crucial.
Let $F$ be a number field, $p$ a prime and $\sigma\in G_F$ an element such that the closure $\overline{\langle\sigma\rangle}$ of the subgroup generated by $\sigma$ in $G_F$ is isomorphic to $\mathbb{Z}_p$.
(For example, let $\mathfrak{p}$ be a nonarchimedean prime of $F$ with residue characteristic $p$, and $F_{\mathfrak{p}}$ the field corresponding to the decomposition subgroup of $G_F$ associated to $\mathfrak{p}$ (determined up to conjugacy).
Then $F_{\mathfrak{p}}$ is isomorphic to the henselization of $F$ with respect to $\mathfrak{p}$ (see, e.g., \cite[\S 2.3, Proposition 11]{BLR}), and $G_{F_{\mathfrak{p}}}$ is identified with the absolute Galois group of the completion of $F$ at $\mathfrak{p}$.
Any nontrivial element $\sigma$ of the ``wild inertia subgroup'' of $G_{F_{\mathfrak{p}}}$ satisfies the above condition.)
We claim that the extension $L$ of $F$ corresponding to $\overline{\langle\sigma\rangle}$ satisfies $L\cdot L_{\mathrm{div}}=\overline{L}$, and, in particular, $L_{\mathrm{div}}\not\subset L$. 
(Note that, this, together with Proposition \ref{TKND}, shows that there exist no algebraic extensions $K$ of $F$ satisfying the conditions given in Proposition \ref{TKND} over which $L$ is Galois.)

For any nonnegative integer $n$, we denote by $L_n$ the intermediate field of $\overline{L}/L$ such that $[L_n:L]=p^n$.
Note that, for any prime $\ell\neq p$ and any nonnegative integer $n$, $L_n^\times$ is $\ell$-divisible.
To verify the above claim, it suffices to verify the following:
\begin{quote}
For any nonnegative integer $n$, $L_{n+1}^{\times\infty}\setminus L_n^\times$ is not empty.
\end{quote}
Let us verify this assertion.
Let $n$ be a nonnegative integer.
Here, we claim that $L_{n+1}^\times/L_n^\times\cdot L_{n+1}^{\times\infty}$ is finite.
Indeed, by considering the long exact sequence associated to the short exact sequence $\xymatrix{1 \ar[r] & \mu_\infty \ar[r] & L_{n+1}^{\times\infty} \ar[r] & L_{n+1}^{\times\infty}/\mu_\infty \ar[r] & 1}$ of $\mathrm{Gal}(L_{n+1}/L_n)$-modules, we obtain the following exact sequence:
\[\xymatrix{H^1(\mathrm{Gal}(L_{n+1}/L_n),\,\mu_\infty) \ar[r] & H^1(\mathrm{Gal}(L_{n+1}/L_n),\,L_{n+1}^{\times\infty}) \ar[r] & H^1(\mathrm{Gal}(L_{n+1}/L_n),\,L_{n+1}^{\times\infty}/\mu_\infty).}\]
Since $H^1(\mathrm{Gal}(L_{n+1}/L_n),\,\mu_\infty)\,(\simeq\mathrm{Hom}(\mathrm{Gal}(L_{n+1}/L_n),\,\mathbb{Q}/\mathbb{Z}))$ is finite and $L_{n+1}^{\times\infty}/\mu_\infty$ is uniquely divisible (hence cohomologically trivial (cf. \cite[Proposition 1.6.2]{NSW})), $H^1(\mathrm{Gal}(L_{n+1}/L_n),\,L_{n+1}^{\times\infty})$ is also finite.
Moreover, by considering the long exact sequence associated to the short exact sequence $\xymatrix{1 \ar[r] & L_{n+1}^{\times\infty} \ar[r] & L_{n+1}^\times \ar[r] & L_{n+1}^\times/L_{n+1}^{\times\infty} \ar[r] & 1}$ of $\mathrm{Gal}(L_{n+1}/L_n)$-modules, we obtain the following exact sequence:
\[\xymatrix{1 \ar[r] & L_n^\times/L_n^{\times\infty} \ar[r] & (L_{n+1}^\times/L_{n+1}^{\times\infty})^{\mathrm{Gal}(L_{n+1}/L_n)} \ar[r] & H^1(\mathrm{Gal}(L_{n+1}/L_n),\,L_{n+1}^{\times\infty}) \ar[r] & 1.}\]
(Here, note that $(L_{n+1}^{\times\infty})^{\mathrm{Gal}(L_{n+1}/L_n)}=L_n^{\times\infty}$ since $L_{n+1}/L_n$ is finite.)
This shows that the subgroup $L_n^\times/L_n^{\times\infty}$ of $(L_{n+1}^\times/L_{n+1}^{\times\infty})^{\mathrm{Gal}(L_{n+1}/L_n)}$ is of finite index.

On the other hand, since $L_{n+1}$ contains all roots of unity, it holds that $H^1(G_{L_{n+1}},\,\mathbb{Z}_p)\simeq\displaystyle\varprojlim_m L_{n+1}^\times/(L_{n+1}^\times)^{p^m}$.
The kernel of the natural homomorphism $L_{n+1}^\times\to\displaystyle\varprojlim_m L_{n+1}^\times/(L_{n+1}^\times)^{p^m}$ is $L_{n+1}^{\times p^\infty}$, and this coincides with $L_{n+1}^{\times\infty}$.
Moreover, since $G_{L_n}$ is abelian, the action of $\mathrm{Gal}(L_{n+1}/L_n)$ on $H^1(G_{L_{n+1}},\,\mathbb{Z}_p)$ is trivial.
In particular, we have $(L_{n+1}^\times/L_{n+1}^{\times\infty})^{\mathrm{Gal}(L_{n+1}/L_n)}=L_{n+1}^\times/L_{n+1}^{\times\infty}$.
Therefore, $L_{n+1}^\times/L_n^\times\cdot L_{n+1}^{\times\infty}=(L_{n+1}^\times/L_{n+1}^{\times\infty})/(L_{n}^\times/L_{n}^{\times\infty})$ is finite.
This completes the proof of the above claim.

Now, let us consider the following exact sequence:
\[\xymatrix{1 \ar[r] & L_{n+1}^{\times\infty}/L_n^{\times\infty} \ar[r] & L_{n+1}^\times/L_n^\times \ar[r] & L_{n+1}^\times/L_n^\times\cdot L_{n+1}^{\times\infty} \ar[r] & 1.}\]
Since $L_{n+1}^\times/L_n^\times$ is infinite, $L_{n+1}^{\times\infty}/L_n^{\times\infty}\,(=L_{n+1}^{\times\infty}/(L_n^{\times}\cap L_{n+1}^{\times\infty}))$ is also infinite.
In particular, $L_{n+1}^{\times\infty}\setminus L_n^\times$ is not empty, as desired.

However, the authors at the time of writing do not know whether $L$ is TKND or not (cf. Remark \ref{KF}).
\end{example}

\begin{remark}\label{KF}
Let $F$ be a field of characteristic $0$.
We say that $F$ is {\it{Kummer-faithful}} (resp. {\it{torally Kummer-faithful}}) if, for every finite extension $F'$ of $F$ and every semi-abelian variety (resp. every torus) $A$ over $F'$, the set of divisible elements of $A(F')$ is zero, that is,
\[\bigcap_{n\geq 1}nA(F')=0.\]
Note that (torally) Kummer-faithful fields are TKND.
Let $K$ be a number field and $e$ a positive integer.
For $\sigma=(\sigma_1, \cdots , \sigma_e)\in G_K^e$, set $\overline{K}(\sigma)$ to be the fixed field of $\sigma$ in $\overline{K}$, and $\overline{K}[\sigma]$ to be the maximal Galois subextension of $K$ in $\overline{K}(\sigma)$.
It is known that, any finite extension of $\overline{K}[\sigma]$ is Kummer-faithful for almost all $\sigma\in G_K^e$ (in terms of the (normalized) Haar measure).
(See \cite[Corollary 1]{Oh1} and \cite{Oh2} for the case where $e\geq 2$, and \cite[Theorem 5.3]{AT} for the general case).
Moreover, if $e\geq 2$, any finite extension of $\overline{K}(\sigma)$ is Kummer-faithful for almost all $\sigma\in G_K^e$ (cf. \cite[Theorem 5.2]{AT}).

These results show that ``almost all'' of the algebraic extensions of number fields are TKND.
However, these results do not imply that the field $L$ constructed in Example \ref{wild} is TKND since (the wild inertia subgroups of) the decomposition subgroups of primes are measure zero sets.
\end{remark}

\subsection{TKND-AVKF property for maximal abelian extension fields of number fields}

Let $K$ be a number field and $K^{\mrm{ab}}$\, ($\subset \overline{\mbb{Q}}$) the maximal abelian extension  field of $K$.
It is shown in Proposition 1.2 of  \cite{Tsu2} 
that $K^{\mrm{ab}}$ is stably $\times \mu$-indivisible, 
and in particular, $K^{\mrm{ab}}$ is TKND.
On the other hand, we also know that 
$K^{\mrm{ab}}\,(= \mbb{Q}^{\mrm{cyc}}$) is AVKF if $K=\mbb{Q}$ but 
$K^{\mrm{ab}}$ is not AVKF if $K=\mbb{Q}(\sqrt{-1})$ (cf.\ \cite[Proposition 2.6]{Tsu2}).
Thus it depends on the choice of a number field $K$
whether $K^{\mrm{ab}}$ is AVKF or not.
We give an explicit criterion for this phenomena.
\begin{theorem}
\label{Kab:AVKF}
Let $K$ be a number field 
and $K^{\mrm{ab}}$ the maximal abelian extension  field of $K$.
Then the following are equivalent.
\begin{itemize}
\item[{\rm (1)}] $K$ does not contain a CM field.
\item[{\rm (2)}] For every abelian variety $A$ over $K$,
$A(K^{\mrm{ab}})_{\mrm{tor}}$ is finite.
\item[{\rm (3)}] $K^{\mrm{ab}}$ is AVKF.
\end{itemize}
\end{theorem}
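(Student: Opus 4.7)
The plan is to prove the three-way equivalence by establishing $(1) \Leftrightarrow (2)$ from Zarhin's theorem \cite{Za} together with complex multiplication theory, and $(2) \Leftrightarrow (3)$ via a Kummer-theoretic argument combined with Weil restriction; throughout I use the Remark following Definition \ref{anabelianDEF}, which reduces AVKF of a field $F$ to showing $\bigcap_{n \ge 1} nA(F) = 0$ for every abelian variety $A$ over $F$ itself. By Zarhin's theorem applied with $L = K^{\mrm{ab}}$, condition $(2)$ is equivalent to the statement that no $K$-simple abelian variety over $K$ is of CM-type over $K$. If a $K$-simple $A_0/K$ is of CM-type over $K$ with $\mrm{End}_K(A_0) \otimes_{\mbb{Z}} \mbb{Q} = F$, the induced CM-type on $F$ is defined over $K$, which forces $K$ to contain the reflex field $F^*$, itself a CM field. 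Conversely, from any CM subfield $F \subset K$ we pick a CM-type $\Phi$ on $F$ and consider the associated CM abelian variety $A_\Phi$ over the reflex field $F^*(\Phi)$. Base-changing to $L := K \cdot F^*(\Phi)$, the variety $A_\Phi$ has CM by $F$ defined over $L$, so the $G_L$-action on each Tate module $T_\ell A_\Phi$ commutes with the $F$-action and thus factors through $G_L^{\mrm{ab}}$; since $L \cdot K^{\mrm{ab}}/L$ is abelian, $G_{L \cdot K^{\mrm{ab}}}$ acts trivially on $T_\ell A_\Phi$, making $A_\Phi(L \cdot K^{\mrm{ab}})_{\mrm{tor}}$ infinite. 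This torsion appears as a direct summand of $B(K^{\mrm{ab}})_{\mrm{tor}}$ for the Weil restriction $B := \mrm{Res}_{L/K}(A_\Phi \otimes L)$, contradicting $(2)$. Hence $(1) \Leftrightarrow (2)$.

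For $(3) \Rightarrow (2)$: applying AVKF with the trivial extension $E = K^{\mrm{ab}}$ gives $\bigcap_n n A(K^{\mrm{ab}}) = 0$ for every abelian variety $A/K$. If $A(K^{\mrm{ab}})_{\mrm{tor}}$ were infinite, then for some prime $\ell$ the subgroup $A[\ell^\infty](K^{\mrm{ab}}) \subset (\mbb{Q}_\ell/\mbb{Z}_\ell)^{2 \dim A}$ is infinite, and by the classification of its subgroups (each is isomorphic to $(\mbb{Q}_\ell/\mbb{Z}_\ell)^r \oplus (\mrm{finite})$ for some $r \ge 0$) it contains a nonzero divisible summand $(\mbb{Q}_\ell/\mbb{Z}_\ell)^r$ with $r \ge 1$, producing a nonzero divisible element of $A(K^{\mrm{ab}})$, a contradiction.

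For $(2) \Rightarrow (3)$: by the Remark cited above it suffices to show $\bigcap_n nA(K^{\mrm{ab}}) = 0$ for every abelian variety $A$ over $K^{\mrm{ab}}$. Descend $A$ to $A_0/L'$ for some finite extension $L' \subset K^{\mrm{ab}}$ of $K$; the decomposition $L' \otimes_K K^{\mrm{ab}} \simeq \prod_{\sigma \in \mrm{Hom}_K(L', K^{\mrm{ab}})} K^{\mrm{ab}}$ exhibits $A(K^{\mrm{ab}}) = A_0(K^{\mrm{ab}})$ as a direct summand of $B(K^{\mrm{ab}})$ for the Weil restriction $B := \mrm{Res}_{L'/K} A_0$ over $K$, reducing us to the assertion that $\bigcap_n nB(K^{\mrm{ab}}) = 0$ for every abelian variety $B/K$. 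Given such an $x$, write $n y_n = x$ with $y_n \in B(K^{\mrm{ab}})$ and pick a finite extension $K_0 \subset K^{\mrm{ab}}$ of $K$ with $x \in B(K_0)$; the Kummer cocycles $\sigma \mapsto \sigma(y_n) - y_n \in B[n]$ assemble into a class $\kappa(x) \in H^1(G_{K_0}, TB)$ that restricts to zero on $G_{K^{\mrm{ab}}}$, hence by inflation-restriction lies in the image of $H^1(\mrm{Gal}(K^{\mrm{ab}}/K_0), (TB)^{G_{K^{\mrm{ab}}}})$. Under $(2)$, $B(K^{\mrm{ab}})_{\mrm{tor}}$ is finite, so $(T_\ell B)^{G_{K^{\mrm{ab}}}} = \varprojlim_n B[\ell^n](K^{\mrm{ab}})$ vanishes for every prime $\ell$ (multiplication by $\ell$ is nilpotent on the finite $\ell$-group $B[\ell^\infty](K^{\mrm{ab}})$); hence $\kappa(x) = 0$, meaning $x \in nB(K_0)$ for every $n$. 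By Mordell-Weil over the number field $K_0$, $B(K_0)$ is finitely generated, so its maximal divisible subgroup $\bigcap_n n B(K_0)$ vanishes, and $x = 0$.

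The main obstacle is this implication $(2) \Rightarrow (3)$: one must upgrade the finiteness of torsion of $B(K^{\mrm{ab}})$ --- a statement about invariants of Tate modules --- to the stronger vanishing of all divisible elements of $A(E)$ for arbitrary finite $E/K^{\mrm{ab}}$. The Weil restriction reduction, followed by the inflation-restriction plus Mordell-Weil argument outlined above, is precisely what bridges this gap.
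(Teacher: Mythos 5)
Your reduction of $(2)\Rightarrow(3)$ --- Weil restriction to descend to abelian varieties over $K$ itself, then the Kummer-cocycle, inflation--restriction and Mordell--Weil argument --- is sound and is essentially the paper's route (it splits the step as $(2)\Rightarrow(3)'\Rightarrow(3)$ and quotes \cite[Proposition 2.4]{OT} for the cocycle part). The CM-theoretic directions, however, contain a genuine gap. In your construction of infinite torsion from a CM subfield $F\subset K$ (the contrapositive of $(1)\Leftrightarrow(2)$), the inference ``the $G_L$-action on $T_\ell A_\Phi$ factors through $G_L^{\mrm{ab}}$, hence $G_{L\cdot K^{\mrm{ab}}}$ acts trivially'' is a non sequitur: factoring through $G_L^{\mrm{ab}}$ only kills the action of $G_{L^{\mrm{ab}}}$, and $L\cdot K^{\mrm{ab}}$ is in general a much smaller field than $L^{\mrm{ab}}$, so an element of $G_{L\cdot K^{\mrm{ab}}}$ has nontrivial image in $\mrm{Gal}(L^{\mrm{ab}}/L\cdot K^{\mrm{ab}})$ on which the abelian character can act nontrivially. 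More structurally, you have the reflex on the wrong side: by the main theorem of complex multiplication, the torsion of an abelian variety of type $(F;\Phi)$ generates abelian extensions of the \emph{reflex} field $F^{\ast}$, not of $F$, and $F^{\ast}$ need not be contained in $K$, so there is no reason for this torsion to lie in $L\cdot K^{\mrm{ab}}$. The paper's construction goes the other way: pass to a \emph{minimal} CM subfield $F$ of $K$, check that any CM-type $(F;\Phi)$ is then primitive (so the double reflex returns $F$), and take an abelian variety of the reflex type $(F^{\ast};\Phi^{\ast})$; by \cite[Theorem 7.44]{Sh1} it admits a model over a number field contained in $(F^{\ast\ast})^{\mrm{ab}}=F^{\mrm{ab}}\subset K^{\mrm{ab}}$ with all torsion rational over $K^{\mrm{ab}}$.

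There is also a smaller gap in your $(3)\Rightarrow(2)$: the AVKF hypothesis rules out an infinite $\ell$-primary part $A(K^{\mrm{ab}})[\ell^{\infty}]$ for each fixed $\ell$, but your claim that infinite total torsion forces some $\ell$-primary part to be infinite is unjustified --- a priori $A(K^{\mrm{ab}})[\ell]$ could be nonzero for infinitely many $\ell$ while every $\ell$-primary part stays finite, which produces no divisible element. The paper avoids this by arranging the implications in a cycle $(1)\Rightarrow(2)\Rightarrow(3)'\Rightarrow(3)\Rightarrow(1)$, so that $(3)\Rightarrow(2)$ is obtained through $(1)$ and Zarhin's theorem \cite{Za}, which yields finiteness of the full torsion group at once.
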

The goal in this section is to show the above theorem.
Note that Theorem \ref{Kab:TKND-AVKF} is an immediate consequence of Theorem 
\ref{Kab:AVKF} since $K^{\mrm{ab}}$ is always TKND.

Before a proof of Theorem \ref{Kab:AVKF},
basically following \cite[\S 3, \S 5 and \S 8]{Sh2}, 
we recall some results related with {\it CM-type} of abelian variety with complex multiplication.
In the following, if $A$ is an abelian variety defined over a field $k$ of characteristic zero,
we denote by $\mrm{End}_{k}(A)$ the ring of $k$-endomorphisms of $A$
and set $\mrm{End}_{k}(A)^0:=\mrm{End}_{k}(A)\otimes_{\mbb{Z}} \mbb{Q}$. 
We say that $A$ is {\it $k$-simple} if it has no abelian $k$-subvariety other than $\{0\}$ and itself, 
and also we say that $A$ is {\it simple} if $A$ is $\bar{k}$-simple.

Let $F$ be a number field of degree $2g$ and $A$ a $g$-dimensional abelian variety over $\mbb{C}$. 
Suppose that an injective ring homomorphism 
$\theta\colon F\hookrightarrow \mrm{End}_{\mbb{C}}(A)^0$ is given.
The abelian variety $A$ is isomorphic to a $g$-dimensional complex torus;
we fix an isomorphism $A(\mbb{C})\simeq \mbb{C}^g/D$ where $D$ is a lattice in $\mbb{C}^g$. 
Let $\lambda$ be an element of $\mrm{End}_{\mbb{C}}(A)$.
The map $\lambda$  corresponds to a linear map 
$\Lambda\colon \mbb{C}^g\to \mbb{C}^g$ such that $\Lambda(D)\subset D$.
With respect to a given coordinate-system of $\mbb{C}^g$,
$\Lambda$ is represented by a matrix $S\in M_g(\mbb{C})$. 
The mapping $\lambda\mapsto S$ can be uniquely extended to a 
ring homomorhism $\mrm{End}_{\mbb{C}}(A)^0\to M_g(\mbb{C})$,
which is called the {\it analytic representation} of $\mrm{End}_{\mbb{C}}(A)^0$. 
We abuse notation by writing $S$ for the analytic representation.
Furthremore, 
since $\Lambda$ as above preserves $D$, 
$\Lambda$ restricted to $D$ defines a map $D\to D$ and 
it is represented by   
a matrix $M\in M_{2g}(\mbb{Z})$ 
with respect to a given basis of $D$. 
The mapping $\lambda\mapsto M$ can be uniquely extended to a 
ring homomorhism $\mrm{End}_{\mbb{C}}(A)^0\to M_{2g}(\mbb{Q})$,
which is called the {\it rational representation} of $\mrm{End}_{\mbb{C}}(A)^0$. 
We abuse notation by writing $M$ for the rational representation.
By definition, one can check that $M$ (as a $\mbb{C}$-representation) 
is equivalent to the direct sum of $S$ and its complex conjugate $\bar{S}$.
Now let us denote by $\vphi_1,\dots ,\vphi_{2g}$ all the $\mbb{Q}$-algebra embeddings 
from $F$ into $\mbb{C}$. 
It follows from \cite[\S 5.1, Lemma 1]{Sh2} that 
the representation $M$ restricted to $F$ is equivalent to 
the direct sum of $\vphi_1,\dots ,\vphi_{2g}$.
Hence, by reordering subscripts, we find that 
$S$ restricted to $F$ is equivalent to 
the direct sum of $\vphi_1,\dots ,\vphi_{g}$, and 
$\bar{S}$ restricted to $F$ is equivalent to 
the direct sum of $\vphi_{g+1},\dots ,\vphi_{2g}$,
which is the direct sum of the complex conjugates $\bar{\vphi}_1,\dots ,\bar{\vphi}_{g}$
of $\vphi_1,\dots ,\vphi_{g}$.
Setting $\Phi:=\{\vphi_1,\dots ,\vphi_{g}\}$, 
we say that $(A,\theta)$ is {\it  of type} $(F;\Phi)$.
If we denote by $\Gamma_F=\{\vphi_1,\dots ,\vphi_{2g}\}$
the set of all   $\mbb{Q}$-algebra embeddings 
from $F$ into $\mbb{C}$ and 
set $\bar\Phi:=\{ \vphi_{g+1},\dots ,\vphi_{2g}\}=\{\bar\vphi_1,\dots ,\bar\vphi_{g}\}$,
then we observe that we have $\Gamma_F=\Phi\bigcup \bar\Phi$
and $\Phi\bigcap \bar\Phi=\emptyset$.
\begin{definition}
\label{CM-type}
Let $F$ be a number field of degree $2g$ and $\Phi$ 
a set of $g$-distinct $\mbb{Q}$-algebra embeddings from $F$ into $\mbb{C}$.
We say that $(F;\Phi)$ is a {\it CM-type} if there exist a
$g$-dimensional abelian variety $A$ over $\mbb{C}$ and 
an injective ring homomorphism $\theta\colon F\hookrightarrow \mrm{End}_{\mbb{C}}(A)^0$ 
such that $(A,\theta)$ is of type $(F;\Phi)$. 
\end{definition}
Here we remark that, it is a theorem of Oort \cite{Oo}
that $A$ as in the above definition 
always has a model over some number field. 
Thus we can choose $A$ as an abelian variety defined over $\overline{\mbb{Q}}$.
(With this choice of $A$, note that we have $\mrm{End}_{\mbb{C}}(A)^0=\mrm{End}_{\overline{\mbb{Q}}}(A)^0$.)

Let $F$ be a number field of degree $2g$ and $\Phi$ 
a set of $g$-distinct $\mbb{Q}$-algebra embeddings from $F$ into $\mbb{C}$.
It is shown in \cite[\S 5.2, Theorem 1]{Sh2} that  
$(F;\Phi)$ is a CM-type if and only if 
$F$ contains a CM field $K$ with the   property that,
for any choice of elements $\vphi\not=\psi$ in $\Phi$, 
it holds $\vphi|_K\not=\bar{\psi}|_K$.
It is also shown in \cite[\S 6.1, Corollary of Theorem 2]{Sh2}
that 
any two abelian varieties of the same CM-type are isogenous to each other.
We say that a CM-type $(F;\Phi)$ is {\it primitive} if 
the abelian varieties of that type are simple.

Let $(F;\Phi)$ be a CM-type.
We set 
$$
F^{\ast}:=\mbb{Q}\left(\sum_{\vphi\in \Phi} \vphi(x)\mid x\in F\right).
$$
If we denote by $F'$  the Galois closure of $F/\mbb{Q}$,
it clearly holds that $F^{\ast}$ is a subfield of $F'$. 
We denote by $\Phi_{F'}$ the set of the elements of $\mrm{Gal}(F'/\mbb{Q})$
inducing some $\vphi\in \Phi$ on $F$ and  
also denote by $\Phi^{\ast}$ the set of all $\mbb{Q}$-algebra embeddings 
of $F^{\ast}$ into $\mbb{C}$ obtained from the  elements of 
$\{\sigma^{-1}\mid \sigma \in \Phi_{F'}\}$.
Then, it is known that $(F^{\ast};\Phi^{\ast})$ is 
a primitive CM-type (cf.\ \cite[\S 8.3, Proposition 28]{Sh2}).
\begin{definition}
We say that $(F^{\ast};\Phi^{\ast})$ is the {\it reflex} of $(F;\Phi)$.
\end{definition}
The reflex satisfies various interesting properties; see \cite[\S 8]{Sh2} for more information.
For our proof of Theorem \ref{Kab:AVKF}, we use the following properties:
\begin{itemize}
\item If $(F;\Phi)$ is a primitive CM-type, then 
$(F;\Phi)$ coincides with the reflex of its reflex.
\item Let $(A,\theta)$ be of CM-type $(F;\Phi)$ 
and $k\subset \mbb{C}$ a subfield.
Assume that $(A,\theta)$ is defined over $k$, that is, 
$A$ is defined over $k$ and 
$\theta(F)\subset \mrm{End}_{k}(A)^0$.
Then we have $k\supset F^{\ast}$.
\end{itemize}

\begin{proof}[Proof of Theorem \ref{Kab:AVKF}]
Consider the following property:
\begin{itemize}
\item[$(3)'$] For every abelain variety $A$ over $K$,
the set of divisible elements of $A(K^{\mrm{ab}})$ is zero.
\end{itemize}
We show implications 
$(1)\Rightarrow (2)\Rightarrow (3)' \Rightarrow (3)\Rightarrow (1)$.

First we show $(1)\Rightarrow (2)$.
Suppose that there exists an abelian variety $A$ over a number field $K$
such that $A(K^{\mrm{ab}})_{\mrm{tor}}$ is infinite.
Choosing $K$-simple abelian varieties $A_1,\dots ,A_r$ over $K$ such that 
$A$ is $K$-isogenous to the product $A_1\times \cdots \times A_r$,
we see that $A_i(K^{\mrm{ab}})_{\mrm{tor}}$ is infinite for some $i$.
Thus we may assume that $A$ is $K$-simple. 
It follows from  \cite[Theorem 1]{Za} that
$F:=\mrm{End}_{K}(A)^0$ is a number field of degree $2\dim A$. 
Let $\theta$ be the natural inclusion  map
$F\hookrightarrow \mrm{End}_{\overline{\mbb{Q}}}(A)^0$ and 
$(F;\Phi)$ the CM-type associated with $(A,\theta)$.
Since every element of $\theta(F)$ is defined over $K$,
it follows from \cite[\S 8.5, Proposition 30]{Sh2} that 
$K$ contains the reflex field of $(F;\Phi)$,
which in particular shows that $K$ contains a CM field.
Thus we obtained $(1)\Rightarrow (2)$.

The implication $(2)\Rightarrow (3)' $ follows from 
\cite[Proposition 2.4]{OT} immediately; 
see also Lemma \ref{AVKF-OT} in the next section. 
(Here, we recall that number fields are AVKF by the Mordell-Weil theorem.)

We show $(3)'\Rightarrow (3)$. 
Assume $(3)'$. Let $L$ be a finite extension of $K^{\mrm{ab}}$ and  
$A$ an abelian variety  over $L$. We want to show that 
the set of divisible elements of $A(L)$ is zero.
If we denote by $B$ the Weil restriction $\mrm{Res}_{L/K^{\mrm{ab}}} (A)$ of $A$,
then  $B$ is an abelian variety over $K^{\mrm{ab}}$ 
and we have $A(L)=B(K^{\mrm{ab}})$.
Take a finite extension $K'$ of $K$ contained in $K^{\mrm{ab}}$
such that $B$ is defined over $K'$, and 
denote by $C$ the Weil restriction $\mrm{Res}_{K'/K} (B)$ of $B$.
Then $C$ is an abelian variety over $K$ and we find
$$
C(K^{\mrm{ab}})=B(K' \otimes_K K^{\mrm{ab}})
=\prod_{\sigma} B^{\sigma} (K^{\mrm{ab}})
\supset B(K^{\mrm{ab}})=A(L),
$$
where
$\sigma$ ranges over the $K$-algebra embeddings of $K'$ into $K^{\mrm{ab}}$
and $B^{\sigma} = B\otimes_{K',\sigma} K^{\mrm{ab}}$
is the base change of $B$ to $K^{\mrm{ab}}$ with respect to $\sigma$.  
By $(3)'$, we conclude that the set of divisible elements of  
$A(L)$ is zero as desired.

Finally we show $(3)\Rightarrow (1)$. 
Assume that $K$ contains a CM field.
The goal is to show that $K^{\mrm{ab}}$ is  not AVKF.
Replacing $K$ by a subfield, 
we may assume that $K$ is a {\it minimal} CM field
in the sense that all the non-trivial subfields of $K$ are not CM fields. 
Take any CM-type $(K;\Phi)$. 
We claim that the CM-type $(K;\Phi)$ is primitive. 
Let us denote by $(A,\theta)$ an abelian variety over $\overline{\mbb{Q}}$ of CM-type $(K;\Phi)$
and denote by $B$ a non-zero simple abelian  $\overline{\mbb{Q}}$-subvariety of $A$. 
Then, $K_0:=\mrm{End}_{\overline{\mbb{Q}}}(B)^0$ is a CM field of degree $2\dim B$ and 
also we may naturally regard $K_0$ as a subfield of $K$ (cf.\ \cite[Chapter 1, Theorem 3.3]{La}).
Since $K$ is a minimal CM field, we have $K_0=K$. 
This gives $\dim A=\dim B$ and thus $A$ is isogenous to $B$,
which implies the fact that $A$ is simple. Thus the claim follows.
Let $(K^{\ast};\Phi^{\ast})$  be the reflex of $(K;\Phi)$
and set $g:=[K^{\ast}:\mbb{Q}]/2$.
We remark that, since  $(K;\Phi)$ is primitive, 
the reflex $(K^{\ast \ast };\Phi^{\ast \ast})$ of $(K^{\ast};\Phi^{\ast})$
coincides with $(K;\Phi)$.
Let $(A,\theta)$ be a $g$-dimensional abelian variety 
over $\overline{\mbb{Q}}$
of type $(K^{\ast};\Phi^{\ast})$.
Since $A$ is simple, the homomorphism 
$\theta\colon K^{\ast}\hookrightarrow \mrm{End}_{\overline{\mbb{Q}}}(A)^0$
is an isomorphism of fields.
Take any polarization $\mcal{C}$ of $A$, defined over $\overline{\mbb{Q}}$,
and consider  the triple $(A,\mcal{C},\theta)$.
As is explained in  \cite[p. 216, after the proof of Theorem 7.44]{Sh1},
there exist a number field $K'$ contained in $(K^{\ast \ast })^{\mrm{ab}}=K^{\mrm{ab}}$
and a triple $(A',\mcal{C}',\theta')$ defined over $K'$
such that  $(A,\mcal{C},\theta)$ is isomorphic to 
$(A',\mcal{C}',\theta')$ over $\overline{\mbb{Q}}$
and all the torsion points of $A'$ are defined over $(K^{\ast \ast })^{\mrm{ab}}=K^{\mrm{ab}}$.
Since any non-zero torsion element of $A'$ is a non-zero divisible element of $A'(K^{\mrm{ab}})$, 
we conclude that $K^{\mrm{ab}}$ is not AVKF as desired.
\end{proof}

\subsection{``Kummer-type"  construction of TKND-AVKF fields}
In this section,
we give a ``Kummer-type"  construction of TKND-AVKF fields (cf.\ Proposition \ref{Thm.A2} and Corollary \ref{sequence}) 
and give a proof of Theorem \ref{MT:AVKF}
in the  Introduction.
It may be helpful to rewrite our results in terms of notions 
appearing in anabelian geometry.
Following \cite[Definition 6.1]{HMT},
we introduce some notions.

\begin{definition}
Let $F$ be a field.

\noindent
(1)  We say that $F$ is {\it AV-tor-finite} if,
for every finite extension $E$ of $F$
and every abelian variety $A$ over $E$,
it holds that $A(E)_{\mrm{tor}}$ is finite. 

\noindent
(2)  Let $p$ be a prime.
We say that $F$ is {\it $p^{\infty}$-AV-tor-finite} if,
for every finite extension $E$ of $F$
and every abelian variety $A$ over $E$,
it holds that $A(E)[p^{\infty}]$ is finite. 
\end{definition}
Number fields are AV-tor-finite by the Mordell-Weil theorem.
More generally, sub-$p$-adic fields are AV-tor-finite
(cf.\ \cite[Proposition 2.9]{OT}).
Ribet's theorem \cite{KL} implies that 
$k^{\mrm{cyc}}$ is AV-tor-finite for any number field $k$.

\begin{remark}
\label{equivRem}
(1) Any subfield of an AV-tor-finite field (resp.\ a $p^{\infty}$-AV-tor-finite field)
is also AV-tor-finite (resp.\ $p^{\infty}$-AV-tor-finite).

\noindent
(2) By considering the Weil restrictions of abelian varieties,
one verifies immediately that 
a field $F$ is AV-tor-finite (resp.\ $p^{\infty}$-AV-tor-finite) 
if and only if $A(F)_{\mrm{tor}}$ (resp.\ $A(F)[p^{\infty}]$) 
is finite for every abelian variety $A$ over $F$. 
\end{remark}

\begin{lemma} 
\label{AVKF-OT}
Let $A$ be an abelian variety over a field $K$ 
and $L$ an algebraic extension of $K$.
Consider the following conditions.
\begin{itemize}
\item[{\rm (a)}] The set of divisible elements of $A(L)$ is zero.
\item[{\rm (b)}] The set of divisible elements of $A(L)_{\rm tor}$ is zero.
\item[{\rm (c)}] $A(L)[\ell^{\infty}]$ is finite for any prime  $\ell$.
\end{itemize}
Then we have $(a)\Rightarrow (b)\Leftrightarrow (c)$. 
If $K$ is AVKF and $L$ is a Galois extension of $K$, then 
we have $(a)\Leftrightarrow (b)\Leftrightarrow (c)$.  
\end{lemma}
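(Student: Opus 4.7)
The plan is to prove the implications separately. The implication $(a)\Rightarrow(b)$ is immediate since $A(L)_{\mrm{tor}}\subseteq A(L)$, so any element divisible in $A(L)_{\mrm{tor}}$ is a fortiori divisible in $A(L)$.

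For $(b)\Leftrightarrow(c)$ the structural observation is that any subgroup of $A[\ell^{\infty}](\overline{K})\simeq(\mbb{Q}_\ell/\mbb{Z}_\ell)^{2g}$ is automatically a $\mbb{Z}_\ell$-submodule, and by Pontryagin duality it is co-finitely generated. Hence $A(L)[\ell^{\infty}]\simeq(\mbb{Q}_\ell/\mbb{Z}_\ell)^{r}\oplus F$ for some $0\le r\le 2g$ and some finite $F$, and both implications drop out from this description. If $A(L)[\ell^\infty]$ is infinite, then $r\ge 1$, and any nonzero element of the summand $\mbb{Q}_\ell/\mbb{Z}_\ell$ is $\ell$-power divisible, hence divisible in $A(L)$ (since integers prime to $\ell$ act bijectively on $\ell$-primary torsion), contradicting $(b)$. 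Conversely, if $x\in A(L)_{\mrm{tor}}$ is divisible, applying the $\ell$-primary integer idempotent in $\mbb{Z}/\mrm{ord}(x)\mbb{Z}$ to any $\ell^k$-th root of $x$ exhibits the $\ell$-primary component $x_\ell$ as $\ell^k$-divisible inside the finite $\ell$-group $A(L)[\ell^{\infty}]$ for all $k$; a finite $\ell$-group has no nonzero element divisible by every power of $\ell$, so $x_\ell=0$ for each $\ell$ and $x=0$.

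The substantive content is $(b)\Rightarrow(a)$ when $K$ is AVKF and $L/K$ is Galois. Given divisible $x\in A(L)$, I would first choose a finite $E\subset L$ with $x\in A(E)$; since $L/E$ is still Galois and AVKF passes to finite extensions, I may replace $K$ by $E$ and assume $x\in A(K)$. Fix a prime $\ell$; by $(b)\Leftrightarrow(c)$ the module $M:=A(L)[\ell^\infty]$ is finite, so there exists $k_0$ with $A[\ell^k](L)=M$ for every $k\ge k_0$. The Kummer obstruction class $\delta_{\ell^k}(x)\in H^1(G_K,A[\ell^k])$, whose vanishing characterizes $\ell^k$-divisibility of $x$ in $A(K)$, restricts to $0$ on $G_L$ by hypothesis, hence by inflation--restriction comes from a unique class in $H^1(\mrm{Gal}(L/K),A[\ell^k](L))$. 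For $k\ge k_0$ the coefficient group is $M$, and $H^1(\mrm{Gal}(L/K),M)$ is annihilated by $|M|$ (a power of $\ell$). The compatibility $\delta_{\ell^k}(x)=\ell\cdot\delta_{\ell^{k+1}}(x)$ under the transition map $a\mapsto \ell a\colon A[\ell^{k+1}]\to A[\ell^k]$ then exhibits $\delta_{\ell^k}(x)$ as divisible by arbitrarily high powers of $\ell$ inside $H^1(\mrm{Gal}(L/K),M)$; but an element of a group of bounded $\ell$-exponent that is divisible by a sufficiently high power of $\ell$ must vanish. Hence $\delta_{\ell^k}(x)=0$ for all $k$ and all $\ell$, so $x$ is divisible in $A(K)$, and AVKF of $K$ forces $x=0$.

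The main obstacle is exactly this last step: one must convert divisibility in $A(L)$ into divisibility in $A(K)$ in order to invoke AVKF, and the argument simultaneously requires the Galois hypothesis on $L/K$ (to deploy inflation--restriction and localize the Kummer classes on $\mrm{Gal}(L/K)$) and the finiteness of $A(L)[\ell^\infty]$ coming from $(b)\Leftrightarrow(c)$ (to give bounded $\ell$-exponent on $H^1$, which is what kills the $\ell$-power-divisible obstruction classes). Neither ingredient alone suffices; it is their combination that drives the implication.
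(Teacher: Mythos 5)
Your argument is correct: the unconditional implications via the structure of subgroups of $A[\ell^{\infty}](\overline{K})\simeq(\mbb{Q}_\ell/\mbb{Z}_\ell)^{2g}$, and the implication $(b)\Rightarrow(a)$ via Kummer classes, inflation--restriction along the Galois extension $L/K$, and the fact that the obstruction classes become $\ell$-power-divisible elements of the $|M|$-torsion group $H^1(\mrm{Gal}(L/K),M)$ with $M=A(L)[\ell^{\infty}]$ finite, is exactly the standard argument, and you correctly identify that the Galois hypothesis and the finiteness from $(b)\Leftrightarrow(c)$ are both indispensable for transporting divisibility from $A(L)$ down to $A(K)$ before invoking AVKF. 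The paper gives no independent proof here --- it simply states that the proof of \cite[Proposition 2.4]{OT} carries over --- and your write-up is essentially that proof.
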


\begin{proof}
The same proof as that of \cite[Proposition 2.4]{OT} proceeds.
\end{proof}

Here is  an immediate consequence of  Lemma \ref{AVKF-OT}.
\begin{corollary}
\label{AVKF-OT2}
Let $L$ be a Galois extension of  an AVKF field of characteristic $0$.
Then, $L$ is AVKF if and only if $L$ is 
$p^{\infty}$-AV-tor-finite for every prime $p$.
\end{corollary}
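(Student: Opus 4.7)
My plan is to reduce both directions to Lemma \ref{AVKF-OT}, using the Weil-restriction observations (the remark just after Definition \ref{anabelianDEF} and Remark \ref{equivRem} (2)) to cut the ``for every finite extension'' clauses in the definitions of ``AVKF'' and ``$p^\infty$-AV-tor-finite'' down to ``for every abelian variety over $L$ itself.''

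For the easy direction, suppose $L$ is AVKF and let $A$ be any abelian variety over $L$. Then the set of divisible elements of $A(L)$ is zero, which is condition (a) of Lemma \ref{AVKF-OT}. The implication (a)$\Rightarrow$(c) of that lemma holds unconditionally, so $A(L)[p^\infty]$ is finite for every prime $p$, and hence $L$ is $p^\infty$-AV-tor-finite for every $p$.

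For the nontrivial direction, assume $L$ is $p^\infty$-AV-tor-finite for every $p$, and let $A$ be any abelian variety over $L$. The first step is to descend $A$ to a finite subextension of $K$ in $L$: since $L/K$ is algebraic (being Galois), the finitely many coefficients of the defining equations of $A$ all lie in some finite extension $K'$ of $K$ inside $L$, so $A$ descends to an abelian variety $A_0$ over $K'$ with $A_0\otimes_{K'}L\simeq A$. Because $K'/K$ is finite and $K$ is AVKF, $K'$ is itself AVKF; and $L/K'$ remains Galois. Thus Lemma \ref{AVKF-OT} applies to $(A_0,K',L)$: condition (c) holds by hypothesis, since $A_0(L)[p^\infty]=A(L)[p^\infty]$ is finite for every $p$, and the lemma then yields condition (a), namely that the divisible elements of $A_0(L)=A(L)$ are zero. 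This gives that $L$ is AVKF.

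I do not anticipate a real obstacle here: the corollary is essentially a packaging of Lemma \ref{AVKF-OT} together with the stability of the AVKF property under finite extensions, and the only moving part is the descent of $A$ to a finite subextension of $K$, which is immediate from the algebraicity of $L/K$.
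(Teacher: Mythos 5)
Your proof is correct and follows exactly the route the paper intends: the paper states the corollary as an immediate consequence of Lemma \ref{AVKF-OT}, and your argument simply spells out that deduction, using the Weil-restriction remarks to reduce to abelian varieties over $L$ itself and descending $A$ to a finite subextension $K'$ of $L/K$ so that the lemma (with $K'$ AVKF and $L/K'$ Galois) applies. No gaps.
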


\begin{proposition}
\label{Thm.A2}
Let $K$ be a field of characteristic $0$.
Let $L$ be a Galois extension of $K$ with $L\supset \mu_{\infty}(\overline{K})$ 
and set $M:=L(K^{1/\infty})$.

\noindent
{\rm (1)} Assume that 
$\chi_{p}(G_K)$ is open in $\mbb{Z}_p^{\times}$
for a prime $p$.
If $L$ is $p^{\infty}$-AV-tor-finite, then 
any subfield of $M$ is also $p^{\infty}$-AV-tor-finite.

\noindent
{\rm (2)} Assume that 
$\chi_{p}(G_K)$ is open in $\mbb{Z}_p^{\times}$
for every prime $p$.
If $L$ is AVKF, then any subfield of  $M$ is also AVKF.
\end{proposition}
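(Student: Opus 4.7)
The plan is to establish (1) first and then deduce (2). By Remark \ref{equivRem} (1) and (2), proving (1) reduces to showing that $A(M)[p^\infty]$ is finite for every abelian variety $A$ over $M$. First, observe that $M/K$ is itself Galois: any $\alpha\in M$ lies in some $L(x_1^{1/n_1},\dots,x_r^{1/n_r})$ with $x_i\in K$, and for any $K$-embedding $\sigma\colon M\hookrightarrow \overline{K}$ one has $\sigma(L)=L$ and $\sigma(x_i^{1/n_i})=\zeta_i x_i^{1/n_i}$ with $\zeta_i\in\mu_{n_i}\subset L$, so $\sigma(\alpha)\in M$. Therefore, given $A$, we may enlarge its field of definition to a finite Galois extension $K'/K$ inside $M$. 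Setting $B:=\mrm{Res}_{K'/K}(A)$, the splitting $M\otimes_K K'\simeq\prod_{\sigma\in\mrm{Gal}(K'/K)}M$ (available because $K'\subset M$ and $K'/K$ is Galois) yields $B(M)\simeq\prod_\sigma A^\sigma(M)$, and $A(M)$ embeds as a direct summand of $B(M)$. So it suffices to show $B(M)[p^\infty]$ is finite, where $B$ is now an abelian variety over $K$.

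This is achieved by applying Proposition \ref{Thm.A} to the continuous $G_K$-representation $V:=V_p(B):=T_p(B)\otimes_{\mbb{Z}_p}\mbb{Q}_p$. The openness of $\chi_p(G_K)$ and the assumptions on $L$ are part of the hypotheses. The remaining condition, $V^{G_{L'}}=0$ for every finite extension $L'/L$, follows from $L$ being $p^\infty$-AV-tor-finite together with the standard equivalence ``$B(L')[p^\infty]$ finite $\Leftrightarrow V_p(B)^{G_{L'}}=0$'': indeed, $T_p(B)^{G_{L'}}=\varprojlim_n B(L')[p^n]$ is the Tate module of the maximal divisible subgroup of $B(L')[p^\infty]$, so its rationalization vanishes exactly when $B(L')[p^\infty]$ has no nontrivial divisible part, equivalently when it is finite. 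Proposition \ref{Thm.A} then gives $V_p(B)^{G_M}=0$, so $B(M)[p^\infty]$ is finite, proving (1).

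For (2), an AVKF field is $p^\infty$-AV-tor-finite for every prime $p$ (Lemma \ref{AVKF-OT}, (a)$\Rightarrow$(c)), so applying (1) to each $p$ yields that $M$ is $p^\infty$-AV-tor-finite for every $p$. Since $L\supset\mu_\infty(\overline{K})$ makes $M/L$ Galois and $L$ is AVKF by hypothesis, Corollary \ref{AVKF-OT2} gives that $M$ itself is AVKF. To pass to arbitrary subfields $F\subset M$: for a finite extension $E/F$, an abelian variety $A$ over $E$, and a divisible element $x\in A(E)$, the compositum $EM$ is a finite extension of $M$ and $x$ remains divisible in $A(EM)$ (witnessed by the same elements of $A(E)\subset A(EM)$); AVKF of $M$ then forces $x=0$, so $F$ is AVKF. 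The main technical move is the Weil-restriction reduction, which is what allows Proposition \ref{Thm.A} (stated for $G_K$-representations) to be applied to an abelian variety only defined over some finite subextension of $M/K$.
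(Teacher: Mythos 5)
Your proof is correct and follows essentially the same route as the paper: both arguments come down to applying Proposition \ref{Thm.A} to the rational $p$-adic Tate module, with the required vanishing over every finite extension of $L$ supplied by the $p^{\infty}$-AV-tor-finiteness of $L$, and then deducing (2) via Corollary \ref{AVKF-OT2}. The only divergence is in how you reduce to an abelian variety over the base field --- you descend via the Weil restriction $\mrm{Res}_{K'/K}(A)$ so as to apply Proposition \ref{Thm.A} over $K$ itself, whereas the paper instead replaces $K$ by a finite extension $K_1\subset M$ over which $A$ is defined and enlarges the tower to $L_1=LK_1$ and $M_1=L_1(K_1^{1/\infty})\supset M$ --- and both devices work equally well.
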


\begin{proof}
The assertion (2) immediately follows from (1) and Corollary \ref{AVKF-OT2}.
We show (1). 
Let $A$ be an abelian variety over $M$. It suffices to show 
$A(M)[p^{\infty}]$ is finite.
Take any finite extension $K_1$ of $K$ 
contained in $M$ so that $A$ is defined over $K_1$. 
We set $L_1:=LK_1$ and $M_1:=L_1(K_1^{1/\infty})$.
We see that $\chi_p(G_{K_1})$ is open in $\mbb{Z}_p^{\times}$
and $L_1$ is $p^{\infty}$-AV-tor-finite.
Hence, if we denote by $V$ the rational 
$p$-adic Tate module $\mbb{Q}_p\otimes_{\mbb{Z}_p}\plim_n A[p^n]$ 
of $A$, then it holds that  $V^{G_{M_1}}=0$ by Proposition \ref{Thm.A}. 
This implies $A(M_1)[p^{\infty}]$ is finite.
Since $M$ is a subfield of $M_1$, we have done.
\if0
Let $B=\mrm{Res}_{K_1/K}(A)$ be the Weil restriction of $A$.
We have $B(M)=A(K_1\otimes_K M) 
=\prod_{\sigma} A^{\sigma} (M) 
\supset A(M)$, 
where
$\sigma$ ranges over the $K$-algebra embeddings of $K_1$ into $M$
and $A^{\sigma} = A\otimes_{K_1,\sigma} K_1$
is the base change of $A$ to $M$ with respect to $\sigma$.  
Since $L$ is $p^{\infty}$-AV-tor-finite, 
$B(L')[p^{\infty}]$ is finite for any finite extension $L'/L$.
This is equivalent to say that, if we denote by $V$ the rational 
$p$-adic Tate module $\mbb{Q}_p\otimes_{\mbb{Z}_p}\plim_n B[p^n]$ 
of $B$, then it holds that  $V^{G_{L'}}=0$.
By Proposition \ref{Thm.A}, we have $V^{G_M}=0$.
This implies that $B(M)[p^{\infty}]$ is finite, 
which gives the fact that $A(M)[p^{\infty}]$ is finite as desired.
\fi
\end{proof}

By applying Proposition above repeatedly,
we can obtain (TKND-)AVKF criterions
for certain types of fields extensions.

\begin{corollary}
\label{sequence}
Let $K_0\subset K_1 \subset K_2 \subset \cdots \subset K_n=K$
be field extensions in $\overline{\mbb{Q}}$.
Consider the following conditions:
\begin{itemize}
\item[{\rm (i)}] $\chi_p(G_K)$ is open in $\mbb{Z}_p^{\times}$ 
for every prime $p$.
\item[{\rm (i)'}] For any prime $p$,
the absolute ramification index of some finite place of $K$ above $p$
is finite.
\if0
 The absolute ramification index of any finite place of $K$ is finite.
\fi
\item[{\rm (ii)}]  $K_0^{\mrm{cyc}}$ is AVKF.
\item[{\rm (iii)}] $K_i\subset K_{i-1}(K_{i-1}^{1/\infty})$ for any $i$. 
\end{itemize}
Then, we have the followings.

\noindent
{\rm (1)} If   $(i), (ii)$ and $(iii)$ hold,  then
any subfield of $K(K^{1/\infty})$ is AVKF.

\noindent
{\rm (2)} If  $(i)', (ii)$ and $(iii)$ hold,  then
any subfield of $K(K^{1/\infty})$ is TKND-AVKF.

\end{corollary}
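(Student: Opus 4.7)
The plan is to prove (1) by iterating Proposition \ref{Thm.A2} (2) exactly $n+1$ times, and then to deduce (2) by first showing that (i)' implies (i) and subsequently invoking Proposition \ref{TKND} (2) with base field $K^{\mrm{cyc}}$.

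For (1), I would define $M_0 := K_0^{\mrm{cyc}}$ and $M_{i+1} := M_i(K_i^{1/\infty})$ for $i = 0,1,\ldots,n$, and prove by induction on $i$ the following four statements: (a) $M_i$ is Galois over $K_i$; (b) $M_i \supset \mu_{\infty}(\overline{\mbb{Q}})$; (c) $K_i \subset M_i$; (d) $M_i$ is AVKF. The base case $i=0$ is immediate from (ii). For the inductive step, $M_{i+1}$ is the compositum of the two Galois extensions $M_i/K_i$ and $K_i(K_i^{1/\infty})/K_i$, hence Galois over $K_i$; combined with the inclusions $K_i \subset K_{i+1} \subset K_i(K_i^{1/\infty}) \subset M_{i+1}$ (from (iii) and (c) for $i$), this gives (a) and (c) for $i+1$, while (b) is immediate. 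For (d), I apply Proposition \ref{Thm.A2} (2) with base field $K_i$ and $L = M_i$: the openness hypothesis follows from (i) since $\chi_p(G_{K_i}) \supset \chi_p(G_K)$, and the remaining hypotheses are exactly (a), (b), (d) for $i$. The proposition then yields that every subfield of $M_{i+1}$ is AVKF, giving (d) for $i+1$. After the $(n+1)$-th iteration, every subfield of $M_{n+1} = M_n(K^{1/\infty})$ is AVKF, and since $K(K^{1/\infty}) \subset M_{n+1}$ by (c) at the final step, (1) follows.

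For (2), the first task is to check that (i)' implies (i). If $\mfrak{p}$ is a finite place of $K$ above $p$ with finite absolute ramification index, then $K$ cannot contain $\mbb{Q}(\mu_{p^\infty})$ (which would force infinite ramification of $\mfrak{p}$ over $p$), so $K \cap \mbb{Q}(\mu_{p^\infty}) = \mbb{Q}(\mu_{p^{n_p}})$ for some $n_p \geq 0$, and hence $\chi_p(G_K)$ has finite index in $\mbb{Z}_p^{\times}$. Thus (1) applies and every subfield of $K(K^{1/\infty})$ is AVKF. For the TKND part, the same finite-ramification assumption implies that the completion $K_\mfrak{p}$ is totally ramified of finite degree over its maximal unramified subextension, and the latter embeds into $\hat{\mbb{Q}}_p^{\mrm{ur}}$ since its residue field is contained in $\overline{\mbb{F}}_p$; therefore $K$ is generalized sub-$p$-adic. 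By Example \ref{st-st} (1), $K^{\mrm{cyc}}$ is then stably $p$-$\times \mu$-indivisible (for every prime $p$). I would then apply Proposition \ref{TKND} (2) with base field $K^{\mrm{cyc}}$ and $L := K^{\mrm{cyc}}(K^{1/\infty})$, which is Galois over $K^{\mrm{cyc}}$. The hypothesis $L \neq \overline{K^{\mrm{cyc}}} = \overline{\mbb{Q}}$ holds because $L/K^{\mrm{cyc}}$ is abelian by Kummer theory whereas $G_{K^{\mrm{cyc}}}$ is not abelian (for instance, splitting fields over $\mbb{Q}$ of polynomials with Galois group $S_5$ remain non-solvable after base change to $K^{\mrm{cyc}}$, up to an abelian quotient). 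The proposition then gives that every subfield $M$ of $K(K^{1/\infty}) \subset L$ is TKND; combined with (1) this yields TKND-AVKF.

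The main obstacle I anticipate is the bookkeeping in the induction for part (1), in particular the simultaneous verification of the Galois property of $M_i$ over $K_i$ and the inclusion $K_i \subset M_i$, since without (iii) the tower could fall out of $M_i$ at some step. A secondary delicate point is the implication (i)' $\Rightarrow$ \emph{generalized sub-$p$-adic}, which hinges on the automatic embedding of the residue field at a finite place of any algebraic extension of $\mbb{Q}$ into $\overline{\mbb{F}}_p$, so that the maximal unramified subextension of $K_\mfrak{p}/\mbb{Q}_p$ lies inside $\hat{\mbb{Q}}_p^{\mrm{ur}}$.
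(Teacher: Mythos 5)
Your proposal is essentially the paper's own proof: part (1) is the same induction on the tower via Proposition \ref{Thm.A2} (2) (your fields $M_i$ are slightly larger than the paper's $K_i(K_i^{1/\infty})$, but the bookkeeping of Galoisness, containment of $\mu_\infty$, and AVKF-ness is exactly what is needed and is correct), and part (2) likewise reduces to showing that $K$ is generalized sub-$p$-adic, importing stable $p$-$\times\mu$-indivisibility of $K^{\mathrm{cyc}}$ from Tsujimura's Lemma D, and applying Proposition \ref{TKND} (2) to $L:=K^{\mathrm{cyc}}(K^{1/\infty})$. You are in fact more careful than the paper in one respect: the paper's proof of (2) silently uses the AVKF conclusion of (1), which requires condition (i), and you supply the missing implication (i)$'\Rightarrow$(i) explicitly.

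The one step I would push back on is your justification of $L\neq\overline{\mathbb{Q}}$. You argue that $G_{K^{\mathrm{cyc}}}$ is nonabelian because $S_5$-extensions of $\mathbb{Q}$ ``remain non-solvable after base change to $K^{\mathrm{cyc}}$''; for an arbitrary field $K$ satisfying only (i)$'$ this is unjustified ($K$ could a priori contain a given $S_5$-field), and making it rigorous here requires invoking condition (iii) to see that $K^{\mathrm{cyc}}$ is contained in a pro-solvable extension of a number field, which you do not do. The paper's route is both shorter and airtight: $K(K^{1/\infty})$ is AVKF by part (1), and an algebraically closed field is never AVKF, so $L\neq\overline{\mathbb{Q}}$. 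Since you already have the AVKF conclusion in hand at that point, substituting this one line closes the gap.
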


\begin{remark}
The assumption (i) in the theorem above holds 
if the absolute ramification index of any finite place of 
the maximal abelian subextension of $K/\mbb{Q}$ is finite.
\end{remark}
\begin{proof}
First we show (1). Note that, for each prime $p$ and 
each $i$,  $\chi_p(G_{K_i})$ is open in $\mbb{Z}_p^{\times}$ by (i).
It follows from (ii) and Proposition \ref{Thm.A2} that $K_0(K_0^{1/\infty})$ is AVKF.
In addition, if $K_{i-1}(K_{i-1}^{1/\infty})$ is AVKF for some $i$, 
then $K_i^{\mrm{cyc}}\, (\subset K_{i-1}(K_{i-1}^{1/\infty}))$ is also AVKF,
which implies that $K_i(K_i^{1/\infty})$ is also AVKF by Proposition \ref{Thm.A2}.
By induction, we obtain the fact that $K_n(K_n^{1/\infty})=K(K^{1/\infty})$ is AVKF.
Next we show (2). It suffices to show that  $K(K^{1/\infty})$ is TKND.
By (i)', we know that $K$ is a generalized sub-$p$-adic field for any prime $p$.
It follows from Lemma D (iii) of \cite{Tsu1} that 
$K$ is both stably $p$-$\times \mu$-indivisible and 
stably $\mu_{p^{\infty}}$-finite for every prime $p$.
By Lemma D (iv) of {\it loc.\ cit.}, $K^{\mrm{cyc}}$ is 
stably $p$-$\times \mu$-indivisible for every prime $p$.
Now we remark that $K(K^{1/\infty})$ is not algebraically closed (since it is AVKF).
Therefore, we conclude that  $K(K^{1/\infty})$ is TKND by Proposition \ref{TKND}.
\end{proof}

\begin{proof}[Proof of Theorem \ref{MT:AVKF}]
First we consider the case (c).
It suffices to show the assertion in the case where  
$K= k(\mu_{p^{n_p}}\mid p\in \mfrak{Primes})$
for some number field $k$, 
and in this case the result follows immediately by applying Corollary  \ref{sequence}
with $K_0=k\subset K_1=K$ 
(note that (ii) in the corollary follows from the result of Ribet \cite{KL}). 

We can reduce a proof of the case (b) to the case (a)
since any group of order $<60$ is solvable.
Thus it suffices to show the theorem in the case (a).
We fix a positive integer $d$ and a number field $k$.
Let $K$ be the composite field of all solvable extensions of degree $\le d$ 
over  $k$. The goal is to show that $K(K^{1/\infty})$ is TKND-AVKF. 
Put $d':=d!$. 
We denote by $K'$ the composite of all finite extension fields $k'$ over $k$
with the following properties:
\begin{itemize}
\item[(i)] $[k':k]\le d'$, and 
\item[(ii)] For some $m\ge 0$, $k'/k$ admits a finite subextensions 
$k=k_0'\subset k_1'\subset \cdots \subset k_m'=k'$
such that each $k_i'/k_{i-1}'$ is abelian.
(We say that $k'/k$ is {\it of length $\le m$} if $k'$ satisfies this situation.)
\end{itemize} 
We have $K\subset K'$ since the degree of the Galois closure of a degree $d$ field extension is
at most $d'$.
Hence, it suffices to show that $K'(K'^{1/\infty})$ is TKND-AVKF. 
Replacing $k$ by a finite extension, 
we may assume that $k$ contains all $d'!$-th roots of unity.
We denote by $K'_i$ the composite of all finite extension fields $k'$ over $k$ 
such that both $[k':k]\le d'$ and $k'$ is of length $\le i$.
By definition, we have the following field extensions;
$$
k=K'_0\subset K'_1\subset K'_2\subset \cdots \subset K'_{d'}=K'.
$$
By a similar manner as Example \ref{st-st},
we see that  the absolute ramification index of any finite place of $K'$ is finite.
The field $(K'_0)^{\mrm{cyc}}=k^{\mrm{cyc}}$ is AVKF by the theorem of Ribet \cite{KL}.
Furthermore, it follows from Kummer theory that 
$K'_i\subset K'_{i-1}(K'_{i-1}{}^{1/\infty})$ for each $i$
(here we note that $k$ contains all $d'!$-th roots of unity).
Therefore, we conclude that $K'(K'^{1/\infty})$ is TKND-AVKF
by Corollary \ref{sequence}. 
\end{proof}

Let $v_p$ be the $p$-adic valuation normalized by $v_p(p)=1$.

\begin{corollary}
\label{sequence2}
Let $K_0\subset K_1 \subset K_2 \subset \cdots \subset \bigcup^{\infty}_{n=1} K_n =K$
be field extensions in $\overline{\mbb{Q}}$.
Assume that the following conditions hold:
\begin{itemize}
\item[{\rm (i)}] $K_0$ is a number field and 
$K_i/K_{i-1}$ is an abelian extension with finite exponent for each $i$. 
\item[{\rm (ii)}]  For any prime $p$, the extension $K_i/K_{i-1}$ is prime-to-$p$ 
for any $i$ large enough.
\end{itemize}
Then, $K(K^{1/\infty})$ is TKND-AVKF.
\end{corollary}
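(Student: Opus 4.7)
The plan is to apply Proposition~\ref{Thm.A2} (2) with the pair $(K, L = K^{\mrm{cyc}})$, which reduces AVKF of $K(K^{1/\infty})$ to two conditions: (i) $\chi_p(G_K)$ is open in $\mbb{Z}_p^{\times}$ for every prime $p$, and (ii) $K^{\mrm{cyc}}$ is AVKF. The openness (i) follows from hypothesis (ii) of the corollary: an induction on $n$ using that each $K_n/K_{n-1}$ has finite exponent shows $\mu_{p^{\infty}}(K_n)$ is always finite, and since $K_i/K_{i-1}$ is prime-to-$p$ for large $i$, this sequence stabilizes, so $\mu_{p^{\infty}}(K)$ is finite, forcing $\chi_p(G_K)$ to be open in $\mbb{Z}_p^{\times}$.

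Before attacking (ii), I first reduce to the case where each $K_n$ is Galois over $K_0$ by replacing $K_n$ with its Galois closure $\tilde K_n$ over $K_0$. A standard compositum argument shows $\tilde K_n/\tilde K_{n-1}$ remains abelian with exponent dividing $e_n$, so all hypotheses of the corollary persist; since TKND-AVKF passes to subfields and $K \subset \tilde K$, it suffices to prove the result for this Galois tower. After this reduction, $K^{\mrm{cyc}}/K_0^{\mrm{cyc}}$ and $K(K^{1/\infty})/K_0^{\mrm{cyc}}$ are both Galois.

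The core task is showing $K^{\mrm{cyc}}$ is AVKF. By Corollary~\ref{AVKF-OT2} (applicable since $K^{\mrm{cyc}}/K_0^{\mrm{cyc}}$ is Galois and $K_0^{\mrm{cyc}}$ is AVKF by Ribet's theorem \cite{KL}), it suffices to show $K^{\mrm{cyc}}$ is $p^{\infty}$-AV-tor-finite for every prime $p$. I establish this in two stages. \emph{Stage 1}: A finite induction on $n$ shows $K_n^{\mrm{cyc}}$ is $p^{\infty}$-AV-tor-finite. The inductive step invokes Proposition~\ref{Thm.A2} (1) on the pair $(K_{\ast} := K_{n-1}(\mu_{e_n}),\, L_{\ast} := K_{n-1}^{\mrm{cyc}})$: $\chi_p(G_{K_{\ast}})$ is open by the same finiteness-of-$\mu_{p^{\infty}}$ reasoning, and Kummer theory over $K_{\ast}$ (which contains $\mu_{e_n}$) gives $K_n^{\mrm{cyc}} \subset L_{\ast}(K_{\ast}^{1/\infty})$. \emph{Stage 2}: A limit argument. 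Choose $N = N_p$ so that $K_i/K_{i-1}$ is prime-to-$p$ for $i > N$; then $K^{\mrm{cyc}}/K_N^{\mrm{cyc}}$ is Galois and pro-prime-to-$p$. I will then prove an auxiliary lemma: \emph{if $E$ is $p^{\infty}$-AV-tor-finite and $F/E$ is a Galois pro-prime-to-$p$ extension, then $F$ is $p^{\infty}$-AV-tor-finite}. Given $A/F$, pick a finite extension $E_0$ of $E$ in $F$ over which $A$ is defined; the continuous $\mbb{Q}_p$-linear action of the pro-prime-to-$p$ group $\mrm{Gal}(F/E_0)$ on $V_p(A)^{G_F}$ factors through a finite quotient (any pro-prime-to-$p$ subgroup of $GL_d(\mbb{Z}_p)$ injects into $GL_d(\mbb{F}_p)$ via reduction and so is finite), so there is a finite extension $F_0/E_0$ inside $F$ with $V_p(A)^{G_F} = V_p(A)^{G_{F_0}}$, and the latter vanishes since $F_0$ is $p^{\infty}$-AV-tor-finite as a finite extension of $E$. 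Applying this lemma with $E = K_N^{\mrm{cyc}}$ and $F = K^{\mrm{cyc}}$ completes Stage 2.

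With AVKF of $K^{\mrm{cyc}}$ established, Proposition~\ref{Thm.A2} (2) yields AVKF of $K(K^{1/\infty})$ and every subfield. For the TKND property, I apply Proposition~\ref{TKND} (2) with base $K_0^{\mrm{cyc}}$, which is stably $p$-$\times \mu$-indivisible by Example~\ref{st-st} (1), and $L = K(K^{1/\infty})$, which is Galois over $K_0^{\mrm{cyc}}$ by the reduction and distinct from $\overline{K}$ (since algebraically closed fields fail AVKF). The main technical obstacle is passing from the finite induction in Stage 1 to the infinite limit $K^{\mrm{cyc}}$; the pro-prime-to-$p$ Galois preservation lemma in Stage 2 is what bridges this gap and forms the heart of the argument.
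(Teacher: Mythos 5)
Your overall architecture coincides with the paper's own proof: the Galois-closure reduction, the finite stages via Kummer theory together with Proposition \ref{Thm.A2}, the pro-prime-to-$p$ limit lemma (which is word-for-word the paper's STEP~3 argument, just packaged as a standalone statement), and the TKND conclusion via Proposition \ref{TKND} with a stably $p$-$\times\mu$-indivisible cyclotomic base. All of those parts are sound.

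The one genuine gap is your justification of the openness of $\chi_p(G_K)$. The implication ``$\mu_{p^{\infty}}(K)$ is finite, forcing $\chi_p(G_K)$ to be open in $\mbb{Z}_p^{\times}$'' is false: a closed subgroup of $\mbb{Z}_p^{\times}$ is open if and only if it is \emph{infinite}, i.e.\ $[K(\mu_{p^{\infty}}):K]=\infty$, and a field can have only finitely many $p$-power roots of unity while $\chi_p(G_K)$ is a finite non-open subgroup. For instance, the maximal totally real subfield $F$ of $\mbb{Q}(\mu_{p^{\infty}})$ (for $p$ odd) satisfies $\mu_{p^{\infty}}(F)=\{1\}$ but $\chi_p(G_F)=\{\pm 1\}$. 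The same confusion breaks your induction: knowing only that $\mu_{p^{\infty}}(K_{n-1})$ is finite does not bound $\mu_{p^{\infty}}(K_n)$, since if $\chi_p(G_{K_{n-1}})$ were finite then $K_{n-1}(\mu_{p^{\infty}})/K_{n-1}$ would be a \emph{finite} extension that a single abelian step of bounded exponent could absorb, making $\mu_{p^{\infty}}(K_n)$ infinite. The invariant to control is not the set of roots of unity actually lying in $K_n$ but the degree $[K_n\cap K_0(\mu_{p^{\infty}}):K_0]$, equivalently the index of $\chi_p(G_{K_n})$ in $\chi_p(G_{K_0})$. This is exactly what the paper's STEP~1 does: since $\mrm{Gal}(K_0(\mu_{p^{\infty}})/K_0)$ is, up to a finite group, isomorphic to $\mbb{Z}_p$, any subextension of $K_0(\mu_{p^{\infty}})/K_0$ contained in $K_i$ must have degree whose $p$-adic valuation is bounded (up to an additive constant) by $\sum_{j} v_p(c_j)$, where $c_j$ is the exponent of $K_j/K_{j-1}$; hypothesis (ii) makes this sum finite, so $\chi_p(G_K)$ has finite index in $\chi_p(G_{K_0})$ and is therefore open. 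Once this repair is made, openness for each $K_n$ (and for $K_*=K_{n-1}(\mu_{e_n})$ in your Stage~1, where you again invoke the faulty ``finiteness-of-$\mu_{p^{\infty}}$'' reasoning) is automatic from $\chi_p(G_K)\subset\chi_p(G_{K_n})\subset\chi_p(G_{K_*})\cdot(\text{finite index})$, and the rest of your argument goes through as written.
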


\begin{remark}
We cannot remove the assumption (ii) from the statement of Corollary \ref{sequence2}.
Let $E$ be an elliptic curve defined over a number field $k$ 
such that  $\mrm{End}_{k}(E)\otimes_{\mbb{Z}}\mbb{Q}$ is a CM field
and denote by $K_i=k(E[p^i])$  the extension field of $k$ obtained by adjoining 
all torsion points of $E$ killed by $p^i$.
One sees that the assumption (i) is satisfied for $\{K_i\}_i$ but (ii) is not satisfied.
The field $K:=\bigcup_i K_i$ is TKND (by Proposition \ref{TKND} and 
the fact that $k^{\mrm{cyc}}\, (\subset K)$ 
is stably $p$-$\times \mu$-indivisible for any prime $p$)  
but is not AVKF.
\end{remark}

\begin{proof}[Proof of Corollary \ref{sequence2}]
In this proof, we set $\mu_{n}:=\mu_{n}(\overline{\mbb{Q}})$
for any integer $n>0$ to simplify notation.
By assumptions, there exists a family $\{c_i\}_{i\ge 1}$ 
of non-negative integers $c_i$ such that 
$\mrm{Gal}(K_i/K_{i-1})$ is of exponent $c_i$ for each $i$
and, for any prime $p$, $p$ does not divide $c_i$ for any $i$ large enough.
For the proof, we may assume that each $K_i$ is a Galois 
extension of $K_0$.
(Indeed, if we denote by $\widetilde{K}_i/K_0$ the Galois closure of $K_i/K_0$,
then 
$\widetilde{K}_i$ is the composite of all $\sigma(K_i)$ for all $\sigma\in G_{K_0}$
and thus we have  injections 
$\mrm{Gal}(\widetilde{K}_i/\widetilde{K}_{i-1})
\hookrightarrow \prod_{\sigma\in G_{K_0}} 
\mrm{Gal}(\sigma(K_i)\widetilde{K}_{i-1}/\widetilde{K}_{i-1})
\hookrightarrow \prod_{\sigma\in G_{K_0}} 
\mrm{Gal}(\sigma(K_i)/\sigma(K_{i-1}))$,
which implies the fact that  $\widetilde{K}_i/\widetilde{K}_{i-1}$ is an abelian extension with  
exponent $c_i$ for each $i$.)
In particular, $K$ is a Galois extension of $K_0$.

\vspace{2mm}

{\bf STEP 1.} We show that 
$\chi_p(G_K)$ is open in $\mbb{Z}_p^{\times}$ for any prime $p$. 
Assume that $\chi_p(G_K)$ is not open.
Set $p':=p$ or $p':=4$ if $p\not=2$ or $p=2$,
respectively.
Replacing $K_i(\mu_{p'})$ with $K_i$,
we may furthermore assume that $K_0$ contains $\mu_{p'}$.
Since $K$ contains $\mu_{p'}$, the field $K$ contains all $p$-power roots of unity.
Take an integer $n$ large enough such that 
\begin{equation}
\label{pval}
v_p([K_0(\mu_{p^{n}}):K_0])>\sum^{\infty}_{j=1} v_p(c_j).
\end{equation}
(Note that the right hand side of \eqref{pval} is finite by the assumption (ii).)
Since $K_0$ contains $\mu_{p'}$, 
the Galois group $\mrm{Gal}(K_0(\mu_{p^n})/K_0)$ is cyclic of $p$-power order;
we put $[K_0(\mu_{p^{n}}):K_0]=p^m$.
We also take an integer $i$ large enough such that $K_i$ contains  $\mu_{p^{n}}$.
Take an element 
$\sigma \in \mrm{Gal}(K/K_0)$ such that its restriction $\bar{\sigma}$
to $K_0(\mu_{p^n})$ generates $\mrm{Gal}(K_0(\mu_{p^n})/K_0)$.
By the assumption (i), we find that $\sigma^{c_1c_2\cdots c_i}$
fixes $K_i$ and thus $\bar{\sigma}^{c_1c_2\cdots c_i}$ is trivial.
Since the order of $\bar{\sigma}$ is $p^m$, we see that $p^m$
divides $c_1c_2\cdots c_i$ but this contradicts \eqref{pval}.

\vspace{2mm}

{\bf STEP 2.} We show that $K_i(K_i^{1/\infty})$ is AVKF for any $i\ge 1$
(thus $K_i^{\mrm{cyc}}$ is also AVKF).
Put $c=c_1c_2\cdots c_i$ 
and $\hat K_j=K_j(\mu_c)$ for $1\le j \le i$.
The field extensions 
$\hat K_0\subset \hat K_1 \subset \hat K_2 \subset \cdots \subset \hat K_i$
satisfy the property that 
$\hat K_j/\hat K_{j-1}$ is an abelian extension with exponent $c_j$ for each $j$. 
Since $\hat K_{j-1}$ contains $\mu_{c_j}$,
it follows from Kummer theory that $\hat K_j\subset \hat K_{j-1}(\hat K_{j-1}^{1/c_j})$
for each $j$.
Furthermore, $\chi_p(G_{\hat K_i})$ is open in $\mbb{Z}_p^{\times}$ by STEP 1
and $\hat K_0^{\mrm{cyc}}$ is AVKF by the theorem of Ribet \cite{KL}.
Hence we obtain that $\hat K_i(\hat K_i^{1/\infty})$ is AVKF
by  Corollary \ref{sequence}.
This in particular implies  that $K_i(K_i^{1/\infty})$ is also AVKF as desired.

\vspace{2mm}

{\bf STEP 3.} We show that $K(K^{1/\infty})$ is AVKF.
It suffices to show that $K^{\mrm{cyc}}$ is AVKF by STEP 1
and Proposition \ref{Thm.A2} (2).
Here we recall that $K$ is now a Galois extension of $K_0$.
Hence $K^{\mrm{cyc}}$ is a Galois extension of 
an AVKF field $K_0$.
By Remark \ref{equivRem} (2) and Corollary \ref{AVKF-OT2}, 
it is enough to prove that  
$A(K^{\mrm{cyc}})[p^{\infty}]$ is finite for any 
abelian variety $A$ over $K^{\mrm{cyc}}$ and any prime $p$.
Now we assume that $A(K^{\mrm{cyc}})[p^{\infty}]$ is infinite
for some prime $p$.
Then the $G_{K^{\mrm{cyc}}}$-fixed part $V^{G_{K^{\mrm{cyc}}}}$ of 
the rational $p$-adic Tate module   $V:=(\plim_{n} A[p^{n}])\otimes_{\mbb{Z}_p} \mbb{Q}_p$
is not zero.
Let $g$ be the dimension of $A$ and choose a finite subextension $K_0'$ of $K^{\mrm{cyc}}/K_0$
such that $A$ is defined over $K_0'$.
Then the $\mbb{Q}_p$-dimension of $V^{G_{K^{\mrm{cyc}}}}$ is at most $2g$ 
and the Galois group $\mrm{Gal}(K^{\mrm{cyc}}/K_0')$
acts continuously on $V^{G_{K^{\mrm{cyc}}}}$.
By continuity of a Galois action, there exists a $\mbb{Z}_p$-lattice $\mcal{L}$ in 
 $V^{G_{K^{\mrm{cyc}}}}$ which is stable under the $\mrm{Gal}(K^{\mrm{cyc}}/K_0')$-action. 
The $\mrm{Gal}(K^{\mrm{cyc}}/K_0')$-action on $\mcal{L}$ is given by a continuous 
homomorphism $\rho\colon \mrm{Gal}(K^{\mrm{cyc}}/K_0')\to GL_{\mbb{Z}_p}(\mcal{L})
\simeq GL_{t}(\mbb{Z}_p)$
for some $t\le 2g$.
Take an integer $i$ large enough such that 
$K_i$ contains $K_0'$, and that $\mrm{Gal}(K^{\mrm{cyc}}/K_i^{\mrm{cyc}})$  
is pro-prime to
the order of $GL_{t}(\mbb{Z}/p'\mbb{Z})$ and $p$
(such $i$ exists by the assumption (ii)). 
Then the restriction to $\mrm{Gal}(K^{\mrm{cyc}}/K_i^{\mrm{cyc}})$ of 
the composite of $\rho$ and the projection 
$GL_{t}(\mbb{Z}_p)\to GL_{t}(\mbb{Z}/p'\mbb{Z})$
has trivial image.
Thus $\rho$ restricted to $\mrm{Gal}(K^{\mrm{cyc}}/K_i^{\mrm{cyc}})$ 
has values in the kernel of the projection 
$GL_{t}(\mbb{Z}_p)\to GL_{t}(\mbb{Z}/p'\mbb{Z})$.
Since this kernel is pro-$p$,
we obtain the fact that 
$\rho$ restricted to $\mrm{Gal}(K^{\mrm{cyc}}/K_i^{\mrm{cyc}})$ 
must be trivial.
This implies that $V^{G_{K^{\mrm{cyc}}}}=V^{G_{K_i^{\mrm{cyc}}}}$. 
Thus $V^{G_{K_i^{\mrm{cyc}}}}$ is not zero by the assumption that $A(K^{\mrm{cyc}})[p^{\infty}]$ is infinite
but this contradicts the fact proved in STEP 2 that $K_i^{\mrm{cyc}}$ is  AVKF.
 
\vspace{2mm}

{\bf STEP 4.} We end the proof by proving that $K(K^{1/\infty})$ is TKND. 
Similar to the proof of Corollary \ref{sequence} (2),
we follow  Tsujimura's results \cite{Tsu1}.
By STEP 1, we know that $K$ is stably $\mu_{p^{\infty}}$-finite.
Furthermore, since $K$ is a Galois extension of $K_0$ and $K_0$
is stably $p$-$\times \mu$-indivisible, 
it follows from Lemma D (v) of \cite{Tsu1} that 
$K$ is stably $p$-$\times \mu$-indivisible.
By Lemma D (iv) of {\it loc. cit.} (or Example \ref{st-st}), 
we find that 
$K^{\mrm{cyc}}$ is also stably $p$-$\times \mu$-indivisible.
On the other hand, we know that $K(K^{1/\infty})$ is not algebraically closed since
it is AVKF.
By Proposition \ref{TKND},
we conclude that $K(K^{1/\infty})$ is TKND.
\end{proof}

\begin{example}
Let $k$ be a number field and $\{c_p\}_{p\in \mfrak{Primes}}$   a family of non-negative integers $c_p$.
Let $K$ be the composite of all finite abelian extensions $k'$ of $k$ with the property that 
$v_p([k':k])\le c_p$ for any prime $p$.
Then $K(K^{1/\infty})$ is TKND-AVKF. 

This can be checked as follows:
Let $p_i$ be the $i$-th prime, that is, $p_1=2<p_2=3<p_3=5<\cdots$.
Consider the field extensions 
$K_0\subset K_1 \subset K_2 \subset \cdots \subset \bigcup^{\infty}_{n=1} K_n =K_{\infty}$ 
defined inductively so that $K_0=k$ and $K_{i}$ is the composite of all abelian extensions over $K_{i-1}$ 
of degree dividing $p_i^{c_{p_i}}$.
Corollary \ref{sequence2} asserts that  $K_{\infty}(K_{\infty}^{1/\infty})$ is TKND-AVKF. 
Since $K$ is a subfield of $K_{\infty}$, the result follows.
\end{example}

We end this paper with an AV-tor-finite analogue of 
Proposition \ref{Thm.A2}.

\begin{proposition}
\label{Thm.B2}
Let $K$ be a field of characteristic $0$ such that both
$\chi_{p}(G_K)$ is open in $\mbb{Z}_{p}^{\times}$ for every prime $p$
and, for some integer $\mu>0$, 
$\bar{\chi}_{p}(G_K)\supset (\mbb{F}_{p}^{\times})^{\mu}$ 
for all but finitely many primes $p$.
Let $L$ be a Galois extension of $K$ with $L\supset \mu_{\infty}(\overline{K})$.
Let $\{\Delta_{p}\}_{p\in \mfrak{Primes}}$ be a family of 
finitely generated subgroups $\Delta_{p}$ of $K^{\times}$.
We set 
$$
M:=L(\Delta_{p}^{1/p^{\infty}}\mid p\in \mfrak{Primes}).
$$
If $L$ is AV-tor-finite, then any subfield of  $M$ is also AV-tor-finite.
\end{proposition}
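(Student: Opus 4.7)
The plan is to combine Proposition \ref{Thm.A} (for finiteness of $p^\infty$-torsion at each individual prime $p$) with a small strengthening of Proposition \ref{Thm.B} (for vanishing of $\ell$-torsion for all but finitely many $\ell$), paralleling the proofs of Theorem \ref{MT:tor2} and Proposition \ref{Thm.A2} (2). By Remark \ref{equivRem} (1) and (2), it suffices to prove that $A(M)_{\mrm{tor}}$ is finite for every abelian variety $A$ over $M$. Since $M/K$ is algebraic, such an $A$ descends to a finite subextension $K_1 \subset M$ of $K$, and the standard Weil restriction trick (as used in the proof of Proposition \ref{Thm.A2} (2)) reduces the problem to the case where $A$ is defined over $K$.

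For each individual prime $p$, I would note that $M \subset L(K^{1/\infty})$ (because each $\Delta_p \subset K^\times$) and that the rational $p$-adic Tate module $V_p$ of $A$ satisfies $V_p^{G_{L'}} = 0$ for every finite extension $L'/L$, since $L$ is AV-tor-finite and hence $A(L')[p^\infty]$ is finite. Proposition \ref{Thm.A} (whose openness hypothesis on $\chi_p(G_K)$ is given) then yields $V_p^{G_M} = 0$; equivalently, $A(M)[p^\infty]$ is finite for every $p$.

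For the complementary statement that $A(M)[\ell^\infty] = 0$ for all but finitely many $\ell$, set $g := \dim A$, take $C := 2g+1$, and enlarge $\mu$ (by a finite factor if necessary) so that $\bar{\chi}_p(G_K) \supset (\mbb{F}_p^\times)^\mu$ holds for every prime $p > \mu C$. The plan is to apply Proposition \ref{Thm.B} to $\mathbf{W} := \{A[\ell]\}_{\ell \in \mfrak{Primes}}$: hypothesis ({\bf H1}) holds by AV-tor-finiteness of $L$, ({\bf H2}) by the choice of $\mu$, and ({\bf H3}) because $\dim_{\mbb{F}_\ell} A[\ell] = 2g < C$. The subtlety is that our $M$ is not literally of the shape required by the statement of Proposition \ref{Thm.B}: we adjoin $\Delta_q^{1/q^\infty}$ at every prime $q$, not all of $K^{1/q^\infty}$ at large primes $q$. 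However, the proof of Proposition \ref{Thm.B} uses the large-prime extensions $L(K^{1/p^\infty})$ only through the facts that they are pro-$p$ abelian over $L$, have Galois group of bounded rank once a finitely generated subset is fixed, and satisfy the commutation relation of Proposition \ref{pro-ell} (3). All of these hold equally well with $L_p := L(\Delta_p^{1/p^\infty})$ for every prime $p$, by Proposition \ref{pro-ell} (1)--(3) and the finite generation of $\Delta_p$. The three STEPs of the proof therefore carry over unchanged with this alternative definition of $L_p$, yielding $A[\ell]^{G_M} = 0$ for all but finitely many $\ell$. Since $A[\ell]^{G_M} = A(M)[\ell]$ and vanishing of $\ell$-torsion forces vanishing of the whole $\ell^\infty$-torsion subgroup, combining with the previous paragraph gives finiteness of $A(M)_{\mrm{tor}}$.

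The main obstacle will be the careful bookkeeping needed to confirm that Proposition \ref{Thm.B} really does admit this generalization---namely, that all three STEPs of its proof go through verbatim when $L_p$ is taken to be $L(\Delta_p^{1/p^\infty})$ for every prime $p$ rather than $L(K^{1/p^\infty})$ for large primes. Conceptually nothing new is required, but the cleanest write-up is probably to first reprove Proposition \ref{Thm.B} in this slightly more general form (allowing a family $\{\Delta_p\}_p$ of finitely generated subgroups of $K^\times$ in place of a single $\Delta$ together with $K^{1/q^\infty}$ for large $q$) and then invoke the strengthened statement here.
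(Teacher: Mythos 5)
Your proposal is correct and its overall architecture coincides with the paper's: reduce to an abelian variety defined over a finite extension $K_1$ of $K$, get finiteness of $A(M)[p^\infty]$ for each individual $p$ from Proposition \ref{Thm.A} (via Proposition \ref{Thm.A2}, using $M\subset L(K^{1/\infty})$), and get $A(M)[\ell]=0$ for almost all $\ell$ from Proposition \ref{Thm.B} applied to $\mathbf{W}=\{A[\ell]\}_\ell$ with $C=2\dim A+1$, after adjusting $\mu$ so that ({\bf H2}) holds.

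The one place you diverge is in fitting $M$ into the shape required by Proposition \ref{Thm.B}: you propose to reprove that proposition with $L_p:=L(\Delta_p^{1/p^\infty})$ at every prime, and you flag the resulting bookkeeping as the main obstacle. That generalization is in fact valid (STEP 1 only uses that $L_p/L$ is pro-$p$ abelian, normal over $K$, and satisfies the conjugation relation of Proposition \ref{pro-ell} (3), all of which hold for $L(\Delta_p^{1/p^\infty})$; STEP 2 involves only the finitely many $p\le\mu C$), but it is unnecessary. Since each $\Delta_q\subset K^\times$, the field $M_1=L_1(\Delta_p^{1/p^\infty}\mid p)$ is contained in $\widetilde{M}:=L_1(\Delta^{1/p^\infty},K_1^{1/q^\infty}\mid p\le \mu C<q)$, where $\Delta$ is the finitely generated group generated by the $\Delta_p$ with $p\le\mu C$; as $G_{M_1}\supset G_{\widetilde M}$, the vanishing $W_\ell^{G_{\widetilde M}}=0$ supplied by Proposition \ref{Thm.B} as stated already forces $W_\ell^{G_{M_1}}=0$. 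This containment-and-monotonicity observation is how the paper proceeds, and it lets you cite Proposition \ref{Thm.B} verbatim rather than re-deriving it.
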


\begin{proof}
Let $A$ be an abelian variety over $M$. 
It follows from Proposition \ref{Thm.A2} that $A(M)[\ell^{\infty}]$ is finite
for every prime $\ell$.
(Note that AV-tor-finite fields are $\ell^{\infty}$-AV-tor-finite for every prime $\ell$.)
Thus it suffices to show that 
$A(M)[\ell]=0$ for all but finitely many primes $\ell$.
Take any finite extension $K_1$ of $K$ 
contained in $M$ so that $A$ is defined over $K_1$. 
We set $L_1:=LK_1$ and $M_1:=L_1M=L_1(\Delta_{p}^{1/p^{\infty}}\mid p\in \mfrak{Primes})$.
We see that $\chi_p(G_{K_1})$ is open in $\mbb{Z}_p^{\times}$
and $L_1$ is AV-tor-finite.
Furthermore, putting $d=[K_1:K]$, 
it follows that 
$\bar{\chi}_{p}(G_{K_1})\supset (\mbb{F}_{p}^{\times})^{d\mu}$ 
for all but finitely many primes $p$.
(In fact, we have $\bar{\chi}_{p}(G_{K})^d\subset \bar{\chi}_{p}(G_{K_1})$ 
since the index of $\bar{\chi}_{p}(G_K)/\bar{\chi}_{p}(G_{K_1})$ is $[K(\mu_p)\cap K_1:K]$
which is a divisor of $d$.)
On the other hand, putting $g=\dim A$, 
we know that $W_{\ell}:=A[\ell]$ is a $2 g$-dimensional 
$\mbb{F}_{\ell}$-representation of $G_{K_1}$.
Applying Proposition \ref{Thm.B}, we find that  $W_{\ell}^{G_{M_1}}=0$, 
equivalently $A(M_1)[\ell]=0$,  for all but finitely many primes $\ell$. 
Since $M$ is a subfield of $M_1$, we finish a proof.
\end{proof}

\end{document}